\RequirePackage{fix-cm}
\documentclass{svjour3}                     
\smartqed  
\makeatletter
\def\cl@chapter{\@elt {theorem}}
\makeatother

\usepackage{lipsum}
\usepackage{amsfonts}
\usepackage[final]{graphicx}
\usepackage{epstopdf}
\usepackage{algorithmic}
\usepackage{algorithm}
\usepackage{thmtools}
\usepackage{amsmath}
\usepackage{hyperref}
\usepackage{cleveref}
\Crefname{prop}{Proposition}{Propositions}
\Crefname{thm}{Theorem}{Theorems}
\Crefname{assumption}{Assumption}{Assumptions}
\Crefname{lem}{Lemma}{Lemmas}

\usepackage{csvsimple}
\usepackage{booktabs}
\usepackage{siunitx}
\usepackage{numprint}
\usepackage{amsopn}
\usepackage{amssymb}
\renewenvironment{proof}{{\itshape Proof.}}{\qed}

\ifpdf
  \DeclareGraphicsExtensions{.eps,.pdf,.png,.jpg}
\else
  \DeclareGraphicsExtensions{.eps}
\fi

\usepackage{titlesec}
\titleformat{\paragraph}[runin]
{\normalfont\normalsize\bfseries}{\theparagraph}{1em}{}
\titlespacing*{\paragraph}
{18pt}{0.35ex plus 1ex minus .2ex}{1.ex plus .2ex}

\titleformat{\section}[runin]{\normalfont\normalsize\bfseries}{\thesection.}{1em}{}[.]
\titleformat{\subsection}[runin]{\normalfont\normalsize\bfseries}{\thesubsection.}{1em}{}[.]
\titleformat{\subsubsection}[runin]{\normalfont\normalsize\bfseries}{\thesubsubsection.}{1em}{}[.]
\titleformat{\paragraph}[runin]{\normalfont\normalsize\bfseries}{\theparagraph.}{1em}{}[.]

\usepackage{ifthen}
\newboolean{longversion}
\setboolean{longversion}{true}

\newcommand{\R}{\mathbb{R}}
\newcommand{\N}{\mathbb{N}}

\newcommand{\E}{\mathbb{E}}
\newcommand{\Eone}{\mathbb{E}_1}
\newcommand{\Etwo}{\mathbb{E}_2}
\newcommand{\Ethree}{\mathbb{E}_3}

\newcommand{\n}{N}
\newcommand{\domhi}{X_i}
\newcommand{\domfi}{\dom(f_i)}
\newcommand{\domfibiconjugate}{\dom(f_i^{**})}
\newcommand{\ones}{\mathbf{1}}

\newcommand{\diam}{\text{diam}}

\newcommand{\AvgDiamSquared}{D^2}
\newcommand{\SqrtAvgDiamSquared}{D}
\newcommand{\Gtilde}{\tilde{G}}
\newcommand{\ErrTK}{R}
\newcommand{\alphabar}{\Lambda}

\newcommand{\conv}{\operatorname{conv}}

\newcommand{\epi}{\operatorname{epi}}
\newcommand{\dom}{\operatorname{dom}}

\newcommand{\ProjPos}[1]{\operatorname{\Pi_+}\left(#1\right)}

\DeclareMathOperator*{\argmax}{arg\,max}
\DeclareMathOperator*{\argmin}{arg\,min}
\usepackage{mathtools}
\newcommand{\norm}[1]{\left\lVert#1\right\rVert}
\newcommand{\abs}[1]{\left\lvert#1\right\rvert}

\newcommand{\iprod}[2]{\left\langle #1, #2 \right\rangle}

\newtheorem{assumption}{Assumption}[section]
\numberwithin{assumption}{section}
\newtheorem{thm}{Theorem}[section]
\numberwithin{thm}{section}
\newtheorem{prop}{Proposition}[section]
\numberwithin{prop}{section}
\newtheorem{lem}{Lemma}[section]
\numberwithin{lem}{section}
\newtheorem{cor}{Corollary}[section]
\numberwithin{cor}{section}

\begin{document}

\title{Two-stage stochastic algorithm for solving large-scale (non)-convex separable optimization problems under affine constraints
}


\titlerunning{Separable optimization}        

\author{Benjamin Dubois-Taine \and Laurent Pfeiffer \and Nadia Oudjane \and Adrien Seguret \and Francis Bach 
}


\institute{Benjamin Dubois-Taine \at
              Universit\'e Paris-Saclay, CNRS, CentraleSup\'elec, Inria, Laboratoire des Signaux et Syst\`emes  \\
              \email{benjamin.paul-dubois-taine@inria.fr}           
           \and
           Laurent Pfeiffer \at
             Universit\'e Paris-Saclay, CNRS, CentraleSup\'elec, Inria, Laboratoire des Signaux et Syst\`emes\\
             F\'ed\'eration de Math\'ematiques de CentraleSup\'elec France\\   \email{laurent.pfeiffer@inria.fr} 
              \and
              Nadia Oudjane \at EDF R\&D \\ \email{nadia.oudjane@edf.fr}
              \and 
              Adrien S\'eguret \at EDF R\&D\\ 
              \email{adrien.seguret@edf.fr}
              \and
              Francis Bach \at Inria, Ecole Normale Sup\'erieure, PSL Research University \\\email{francis.bach@inria.fr}
}

\date{Received: date / Accepted: date}

\maketitle

\begin{abstract}

We consider nonsmooth optimization problems under affine constraints, where the objective consists of the average of the component functions of a large number $\n$ of agents, and we only assume access to the Fenchel conjugate of the component functions. The algorithm of choice for solving such problems is the dual subgradient method, also known as dual decomposition, which requires $O(\frac{1}{\epsilon^2})$ iterations to reach $\epsilon$-optimality in the convex case. However, each iteration requires computing the Fenchel conjugate of each of the $\n$ agents, leading to a complexity $O(\frac{\n}{\epsilon^2})$ which might be prohibitive in practical applications. To overcome this, we propose a two-stage algorithm, combining a stochastic subgradient algorithm on the dual problem, followed by a block-coordinate Frank-Wolfe algorithm to obtain primal solutions. The resulting algorithm requires only $O(\frac{1}{\epsilon^2} + \frac{\n}{\epsilon^{2/3}})$ calls to Fenchel conjugates to obtain an $\epsilon$-optimal primal solution in expectation in the convex case. We extend our results to nonconvex component functions and show that our method still applies and gets (almost) the same convergence rate, this time only to an approximate primal solution recovering the classical duality gap bounds usually obtained using the Shapley-Folkman theorem.

\keywords{Separable optimization \and Dual subgradient \and Block-coordinate Frank-Wolfe \and Nonconvex optimization \and Shapley-Folkman theorem}
\subclass{90C26 \and 90C46 \and 65K05}
\end{abstract}

\section{Introduction}
\label{intro}

We consider separable optimization problems of the form
\begin{align}
\tag{P}
    \begin{split}
    \mbox{minimize} & \ \frac{1}{\n} \sum_{i=1}^\n h_i(x_i)\\
    \mbox{subject to }& \ \frac{1}{\n}\sum_{i=1}^\n A_ix_i  \leq  b,\\
    &x_i \in \domhi, \ i=1, \dots, \n,
    \end{split}
    \label{eq:primal}
\end{align}
in the variables $x_i \in \R^{d_i}$, with $A_i \in \R^{m \times d_i}$, $b \in \R^m$. We assume that the component functions $h_i$ are proper, convex and lower semicontinuous with a convex compact domain $\domhi = \dom(h_i)$. We also assume we have access to the following minimization oracle: given $\gamma \geq 0$ and $\lambda \in \R^m$, the oracle returns
\begin{align}
\label{eq:oracle-definition}
\tag{O1}
    x_i^*((\gamma, \lambda)) \in \argmin_{x_i \in \domhi} \left\{ \gamma h_i(x_i) + \lambda^\top A_i x_i \right\}.
\end{align}
We shall often encounter the case $\gamma = 1$, in which case we simplify notation and simply write
\begin{align}
    x_i^*( \lambda) \in \argmin_{x_i \in \domhi} \left\{ h_i(x_i) + \lambda^\top A_i x_i \right\}.
\end{align}
Note that access to this oracle when $\gamma \not = 0$ is equivalent to computing the subgradient of the Fenchel conjugate of $h_i$, as we have
\begin{align*}
    \argmin_{x_i \in \domhi} \left\{ \gamma h_i(x_i) + \lambda^\top A_i x_i \right\} = \argmax_{x_i \in X_i} \left\{ \left(-\frac{A_i^\top \lambda}{\gamma}\right)^\top x_i - h_i(x_i) \right\} = \partial h_i^*\left(-\frac{A_i^\top \lambda}{\gamma}\right).
\end{align*}
In the case $\gamma = 0$, oracle~\eqref{eq:oracle-definition} is a linear minimization over $\domhi$, which we assume is strictly simpler to compute than the Fenchel conjugate of $h_i$.

\subsection{Background and related work}

\subsubsection{Convex case}
In order to solve optimization problems, it is common to derive the associated dual problem. Subgradient algorithms on this dual problem have been extensively used and studied to solve the original problem, or to at least obtain lower bounds on its optimal value. Starting with the works~\cite{polyak1967general,ermol1966methods}, several results for different stepsizes have been derived~\cite{polyak1987introduction,hiriart1996convex,bertsekas1997nonlinear,bertsekas2003convex}, and extended to distributed settings as well as so-called incremental versions of subgradient algorithms~\cite{kiwiel2001parallel,zhao1999surrogate,nedic2001incremental,nedic2001distributed}.

The above results typically only deal with convergence in the dual space, yet what we are eventually interested in is primal convergence. It turns that when running dual subgradient algorithms, one obtains primal points as a by-product of the computation of the subgradient of the dual function. Convergence for the average of those primal iterates has been investigated, and first results in this direction trace back to at least~\cite{nemirovski1978cezari} and~\cite{shor2012minimization}. More general settings and convex combination schemes than just the averaged iterates have also been studied in~\cite{larsson1997lagrangean,larsson1996ergodic,gustavsson2015primal,onnheim2017ergodic}. However, those results only establish primal convergence of the dual subgradient asymptotically. In~\cite{nedic2009approximate}, the authors derive the first rate of convergence for the averaged primal iterates of the dual subgradient in $O(1/\sqrt{k})$ after $k$ iterations. Specifying this result to our problem~\eqref{eq:primal} where each iteration costs $O(\n)$, this gives a total complexity of $O(\n/\epsilon^2)$ to reach an $\epsilon$-accurate primal solution. Our focus being on instances where $\n$ is large, this may be problematic. A similar rate for the last iterate of a variant of the dual subgradient algorithm was also established in~\cite{nesterov2018dual}. The $O(1/\sqrt{k})$ convergence rate being optimal for nonsmooth convex optimization, efforts to improve them require additional assumptions, for example strong convexity~\cite{beck20141}.

In the large-scale separable setting as it appears in~\eqref{eq:primal}, dual subgradient methods make strong candidates due to the structure of the dual. This has been explored, for example, in~\cite{nedic2009distributed,beck20141}, where the typical argument in favor of those methods is that the computation of the subgradient can be parallelized among the agents, which in turn reduces the overall computation time. While this is true, one cannot always afford to parallelize or distribute the computations, and it is therefore still of interest to reduce the overall complexity on $\n$.

Finally, an important class of methods to solve problems of the form~\eqref{eq:primal} rely on the augmented Lagrangian framework~\cite{ruszczynski1995convergence}. In particular, the ADMM algorithm and its variants~\cite{glowinski1975approximation,gabay1976dual} have gained popularity in recent years. We refer the reader to~\cite{boyd2011distributed} and references therein for more on ADMM-like algorithms. The important point is that those methods rely on the computation of the proximal operator of the functions, i.e., they replace the linear term in~\eqref{eq:oracle-definition} by a quadratic term. We make no such assumption, and our oracle is strictly simpler to compute than the proximal of the functions $h_i$. This is particularly true in the nonconvex case where we may even encounter discrete function domains.

\subsubsection{Nonconvex case}

When the convexity assumptions is relaxed, the well-known result from Aubin and Ekeland~\cite{aubin1976estimates} uses the Shapley-Folkman theorem to provide duality gap bounds which scale with the ratio $\frac{m}{\n}$. In particular, the duality gap vanishes in instances with a growing number of agents $\n$ and a fixed number of constraints $m$. We also refer to~\cite{bi2016refined,bertsekas1983optimal} for similar results with slightly different assumptions.

Those results however do not provide an algorithmic approach to obtain the duality gap bounds. In~\cite{udell2016bounding}, the authors provide a method to achieve those bounds with probability 1, but it requires solving an optimization problem over the full solution of the bidual of the original problem. In~\cite{dubois2025frank}, the authors provide an approach based on the Frank-Wolfe algorithm on a well-designed optimization problem and (approximate) Carath\'eodory algorithms to obtain those duality gap bounds. Our proposed method resembles their approach but, in contrast with them, we do not require knowledge of the optimal dual value, and we shall show that the overall complexity of our approach scales better with $\n$. We discuss the key differences in more detail in Section~\ref{sec:nonconvex}, once all the necessary mathematical tools have been introduced.

\subsection{Roadmap of paper and contributions}

The paper is organized as follows. Let us emphasize that we will assume convexity throughout Section~\ref{sec:preliminaries} and Section~\ref{sec:proposed-algorithm}, and will relax this assumption in Section~\ref{sec:nonconvex}.

In Section~\ref{sec:preliminaries}, we start by introducing all the necessary mathematical and algorithmic tools to study our problem. We then present the dual subgradient algorithm to solve~\eqref{eq:primal} in Subsection~\ref{sec:dual-subgradient}. This algorithm simply consists in a subgradient ascent algorithm on the dual problem, collecting one primal point as a by-product of the computation of the subgradient of the dual at each iteration. A candidate primal solution can then be computed as the average of these primal points~\cite{nedic2009approximate}. We recall the convergence rates of the resulting algorithm, and we end this section by showing that to obtain $\epsilon$-accurate solutions of both the primal problem and the dual problem, the number of calls to the Fenchel conjugates is of the order $O(\n / \epsilon^2)$, which is prohibitive in settings where $\n$ is large.

In Section~\ref{sec:proposed-algorithm} we detail our two-stage approach to improve on this complexity. At first glance it may seem like a simple stochastic version of the deterministic dual subgradient algorithm might be enough to improve on the complexity. We therefore present this stochastic version in Subsection~\ref{sec:stochastic-dual-subgradient}, for which we recall well-known convergence rates for the dual problem. Our first contribution then comes at this point, where we prove a convergence rate for primal candidates (Proposition~\ref{prop:primal-convergence-stochastic-dual-subgradient}). Unfortunately the resulting complexity is still $O(\n/\epsilon^2)$ to obtain $\epsilon$-accurate primal solutions, and thus the algorithm does not directly improve on its deterministic counterpart, justifying the need for our more evolved two-stage approach. We end the section by noting that although we do get this complexity $O(\n/\epsilon^2)$ in the primal, we only get complexity $O(1/\epsilon^2)$ to obtain an $\epsilon$-accurate dual solution in expectation. Although this last complexity result is not new, we emphasize it as it is a crucial part of our approach.

We then leverage both the primal and dual convergence rates of the stochastic dual subgradient algorithm in the second stage of our approach, which we present in Subsection~\ref{sec:BCFW}. There, we show how one can obtain primal points through a block-coordinate Frank-Wolfe algorithm~\cite{lacoste2013block} on a well-crafted smooth objective function. The main contribution of this work comes at the end of this section, where we show that putting the two stages together, the proposed algorithm (Algorithm~\ref{alg:2-stage-algorithm} in the text) outputs, in expectation, an $\epsilon$-accurate primal solution after at most $O(\frac{1}{\epsilon^2} + \frac{N}{\epsilon^{2/3}})$ calls to Fenchel conjugates of the component functions, hence dramatically improving over the $O(N/\epsilon^2)$ complexity of the dual subgradient algorithm. This is summarized in Theorem~\ref{thm:2-stage-algorithm-complexity}.

In Section~\ref{sec:nonconvex}, we extend our results to nonconvex separable problems, where the primal~\eqref{eq:primal} can be seen as the bidual of the considered nonconvex problem. We start the section by recalling the classical duality gap bounds that can be obtained using the famous Shapley-Folkman theorem. We then show that our two-stage algorithmic framework can still be applied to the bidual of nonconvex problems without any modification up until the very last output, where a final step consisting of the computation of a Carath\'eodory decomposition must be carried out in order to obtain a solution to the primal nonconvex problem.

\section{Preliminaries}
\label{sec:preliminaries}

If $f$ is a function not everywhere $+ \infty$, with an affine lower bound, the (Fenchel) conjugate of $f$ is defined as
\begin{align}
    f^*(y) = \sup_{x \in \dom f} \{ y^\top x - f(x) \}.
\end{align}

We use $\norm{\cdot}$ as the standard Euclidean norm. For a vector $x\in \R^p$, we also define $\norm{x}_+ = \sqrt{\sum_{j=1}^p \max(x_j, 0)^2}$. Similarly, the projection $\ProjPos{x}$ of $x$ onto the positive orthant is defined as
\begin{align*}
    \ProjPos{x}_j &= \max(x_j, 0), \ j=1,\dots, p.
\end{align*}

\subsection{Dual}
We define the dual function as
\begin{align*}
    d(\lambda) := -\lambda^\top b + \frac{1}{\n} \sum_{i=1}^\n \inf_{x_i \in \domhi} \left\{ h_i(x_i) + \lambda^\top A_i x_i \right\}.
\end{align*}
The dual problem of~\eqref{eq:primal} is then 
\begin{align}
    \tag{D}
    \begin{split}
    \mbox{maximize} & \ d(\lambda)\\
    \mbox{subject to }& \ \lambda \geq 0.
    \end{split}
    \label{eq:dual}
\end{align}

We make the assumption that the dual problem has a solution and that strong duality holds.

\begin{assumption}
\label{ass:existence-dual-maximizer}
    There exists $\lambda^* \in \R^m_+$ such that $\lambda^* \in \argmax_{\lambda \in \R^m_+ } d(\lambda)$. We write $d^* = d(\lambda^*)$. Moreover, strong duality holds, i.e., $d^*$ is also the optimal value of the primal problem~\eqref{eq:primal}.
\end{assumption}

The above assumption holds under quite general qualification conditions. We give one example next, whose proof can be found in~\cite[Theorem 2.6]{lemarechal2001geometric}

\begin{prop}
    Suppose that for all $i=1, \dots, \n$, there exists some $x_i \in  \domhi$ such that $\frac{1}{\n} \sum_{i=1}^\n A_i x_i < b$. Then Assumption~\ref{ass:existence-dual-maximizer} holds.
\end{prop}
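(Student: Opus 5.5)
We argue by Slater's condition via a separating hyperplane, the usual route to strong duality with dual attainment. Let $\bar x = (\bar x_1,\dots,\bar x_\n)$ denote the strictly feasible point. Let $p^*$ be the optimal value of \eqref{eq:primal}. It is finite, because each $\domhi$ is compact and each $h_i$ is proper and lower semicontinuous, so $h_i$ is bounded below on $\domhi$ and attains its minimum there. The feasible set is nonempty (it contains $\bar x$), and it is compact, being the intersection of the compact set $\prod_i \domhi$ with a closed half-space set. Hence $p^*$ is attained.

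Define the set
\begin{align*}
    \mathcal{C} = \Big\{ (u,t) \in \R^m \times \R : \exists\, x_i \in \domhi \text{ with } \tfrac{1}{\n}\sum_i A_i x_i - b \leq u,\ \tfrac{1}{\n}\sum_i h_i(x_i) \leq t \Big\}.
\end{align*}
It is convex, since the $h_i$ and $\domhi$ are convex. The point $(0,p^*)$ is not in the interior of $\mathcal{C}$: otherwise $(0,p^*-\delta)\in\mathcal{C}$ for some $\delta>0$, which contradicts the optimality of $p^*$. Separation therefore gives a nonzero $(\mu,\mu_0)$ such that
\begin{align*}
    \mu^\top u + \mu_0 t \geq \mu_0 p^* \quad \text{for all } (u,t)\in\mathcal{C}.
\end{align*}
Because $\mathcal{C}$ is upward closed in both $u$ and $t$, we get $\mu\geq 0$ and $\mu_0\geq 0$.

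The main step is to rule out $\mu_0=0$, and this is where Slater's condition enters. If $\mu_0=0$, then $\mu\neq 0$ with $\mu\ge 0$. Plugging in $u=\frac1\n\sum_i A_i\bar x_i - b<0$ gives $\mu^\top u<0$, a contradiction. So $\mu_0>0$, and we set $\lambda^*=\mu/\mu_0\geq 0$.

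Now take any $x_i\in\domhi$ and apply the inequality at $u=\frac1\n\sum_iA_ix_i-b$, $t=\frac1\n\sum_ih_i(x_i)$. This gives
\begin{align*}
    \frac1\n\sum_i \big(h_i(x_i)+\lambda^{*\top}A_ix_i\big) - \lambda^{*\top} b \geq p^*.
\end{align*}
Taking the infimum over the $x_i$, separately in each block, yields $d(\lambda^*)\geq p^*$.

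Weak duality, $d(\lambda)\le p^*$ for all $\lambda\ge0$, follows directly: for any feasible $x$ and $\lambda \geq 0$, the term $\lambda^\top(\frac1\n\sum_iA_ix_i-b)$ is at most $0$. Together these give $d(\lambda^*)=p^*=\max_{\lambda\ge0}d(\lambda)$. This establishes both the existence of a dual maximizer and strong duality, which is Assumption~\ref{ass:existence-dual-maximizer}.

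Apart from the $\mu_0\neq0$ step, the only delicate point is checking that $p^*$ is finite, so that the separation argument applies; compactness of the domains handles this.
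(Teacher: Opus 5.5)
Your argument is correct. The paper gives no proof of its own here; it simply defers to \cite[Theorem 2.6]{lemarechal2001geometric}. You instead supply the standard self-contained Slater argument:
\begin{itemize}
\item separate $(0,p^*)$ from the convex, upward-closed set $\mathcal{C}$ of (constraint residual, cost) pairs;
\item use the strictly feasible point to exclude $\mu_0=0$;
\item normalize to $\lambda^*=\mu/\mu_0$, minimize blockwise to get $d(\lambda^*)\geq p^*$, and close with weak duality.
\end{itemize}

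This is in the same geometric spirit as the cited reference, which also reasons about image sets of constraint and cost values. The citation is shorter but leaves the reader to check that its hypotheses match the present setting. Your version is self-contained and shows exactly where each hypothesis enters:
\begin{itemize}
\item compactness of $X_i$ and lower semicontinuity of $h_i$ are used only to get $p^*>-\infty$;
\item $p^*<+\infty$ comes from the Slater point lying in $\prod_i \dom(h_i)$;
\item convexity of $h_i$ and $X_i$ gives convexity of $\mathcal{C}$;
\item strict feasibility is used only to rule out $\mu_0=0$.
\end{itemize}
The attainment of $p^*$ that you establish is never used and can be dropped.
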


In this work we will be developing algorithms to solve~\eqref{eq:primal} and~\eqref{eq:dual}. Of course, algorithms only solve such problems up to an arbitrary precision. We define this notion next.

\begin{definition}
    Let $\epsilon > 0$. We say that $x = [x_1, \dots, x_\n] \in \R^d$ is an $\epsilon$-accurate primal solution of~\eqref{eq:primal} if and only if for all $i=1,\dots, \n$, $x_i \in \domhi$ and
    \begin{align*}
        &\frac{1}{\n} \sum_{i=1}^\n h_i(x_i) \leq d^* + \epsilon, \\
        &\norm{\frac{1}{\n} \sum_{i=1}^\n A_i x_i -b }_+ \leq \epsilon.
    \end{align*}
    We say that $x = [x_1, \dots, x_\n]$ is an $\epsilon$-accurate primal solution in expectation if the above bounds hold in expectation.
\end{definition}
We have a similar definition for approximate solutions of the dual.

\begin{definition}
    Let $\epsilon > 0$. We say that $\lambda \in \R^m_+$ is an $\epsilon$-accurate dual solution of~\eqref{eq:dual} if and only if
    \begin{align*}
        d(\lambda) \geq d^* - \epsilon.
    \end{align*}
    We say that $\lambda \in \R^m_+$ is an $\epsilon$-accurate dual solution in expectation if the above bound holds in expectation.
\end{definition}

\subsection{Dual subgradient}
\label{sec:dual-subgradient}

The dual subgradient method is a classical method for solving general optimization problems, i.e., it does not require the separable structure of~\eqref{eq:primal}. The method starts from some $\lambda_0 \in \R^m_+$. At each iteration $t$, it computes a subgradient of $d$ at $\lambda_t$, takes a positive step in that direction, and projects the result back onto the positive orthant. Observe that the subdifferential of $d$ is given by
\begin{align}
\label{eq:subgradient-of-dual-function-definition}
     \partial d(\lambda) = -b + \frac{1}{\n} \sum_{i=1}^\n \argmin_{x_i \in \domhi} \left\{ h_i(x_i) + \lambda^\top A_i x_i \right\},  \ \lambda \in \R^m_+.
\end{align}
In particular, in order to compute the subgradient of $d$, we must make $\n$ calls to oracle~\eqref{eq:oracle-definition}, one for each agent. We summarize the full method in Algorithm~\ref{alg:dual-subgradient-separable-primal}. Note finally that the dual subgradient algorithm outputs the average of the outputs of the oracle as a primal candidate solution for each $i=1,\dots, \n$.

\begin{algorithm}
\caption{Dual Subgradient for problem~\eqref{eq:primal}}
\label{alg:dual-subgradient-separable-primal}
\begin{algorithmic}[1]
\STATE{\textbf{Input}: $\lambda_0 \in \R^m_+$, number of iterations $T$.} 
\FOR{$t=0, \dots, T-1$}
\FOR{$i=1, \dots, \n$}
\STATE{
Compute
\begin{align}
\label{eq:linear-oracle-within-subgradient-algorithm}
    x_i^*(\lambda_t) \in \argmin_{x_i \in \domhi} \left\{ h_i(x_i) + \lambda_t^\top A_ix_i\right\}.
\end{align}
}
\ENDFOR
\STATE{Set $g_t = \frac{1}{\n} \sum_{i=1}^\n A_ix_i^*(\lambda_t) - b \in \partial d(\lambda_t)$.}
\STATE{Pick some stepsize $\alpha_t > 0$.}
\STATE{Set $\lambda_{t+1} = \ProjPos{\lambda_t + \alpha_t g_t}$}
\ENDFOR
\STATE{\textbf{Return:} Dual candidate $\bar{\lambda}_T = \frac{1}{T} \sum_{t=0}^{T-1} \lambda_t$ and primal candidate $\bar{x}_{i, T} = \frac{1}{T} \sum_{t=0}^{T-1} x_i^*(\lambda_t)$ for all $\ i=1,\dots, \n$
}
\end{algorithmic}
\end{algorithm}

Before we move on to the analysis of Algorithm~\ref{alg:dual-subgradient-separable-primal}, we define the quantity
\begin{align}
\label{eq:G-definition}
    G := \sup_{x_i \in \domhi, i=1,\dots, \n} & \norm{\frac{1}{\n} \sum_{i=1}^\n A_i x_i - b}.
\end{align}
In particular, we see from~\eqref{eq:subgradient-of-dual-function-definition} that $G$ is an upper bound on the norm of any element of the subgradient of the dual function $d$, i.e., the function $d$ is $G$-Lipschitz.

We are now ready to provide convergence rates for Algorithm~\ref{alg:dual-subgradient-separable-primal}. We start with dual convergence in Subsection~\ref{sec:dual-subgradient-dual-convergence}, and then proceed to primal convergence in Subsection~\ref{sec:dual-subgradient-primal-convergence}.

\subsubsection{Dual convergence}
The next proposition establishes convergence of the averaged dual iterate $\bar{\lambda}_T$. Let us emphasize that from a strictly dual point of view, Algorithm~\ref{alg:dual-subgradient-separable-primal} is simply subgradient ascent on the Lipschitz concave function $d$, a setting which has been extensively studied in the literature (see for example~\cite[Theorem 3.2.2]{nesterov2013introductory}). We recall the rate and proof for completeness.

\label{sec:dual-subgradient-dual-convergence}
\begin{prop}
\label{prop:dual-subgradient-dual-convergence}
    Suppose Assumption~\ref{ass:existence-dual-maximizer} holds. Consider Algorithm~\ref{alg:dual-subgradient-separable-primal} with input $\lambda_0 = 0_m$ and constant stepsize $\alpha_t = \alpha = \frac{\alphabar}{G\sqrt{T}}$ for some $\alphabar > 0$. The output $\bar{\lambda}_T$ then satisfies
    \begin{align*}
        d^* - d(\bar{\lambda}_T) \leq \frac{G \norm{\lambda^*}^2}{2\alphabar\sqrt{T}} + \frac{G \alphabar}{2\sqrt{T}}.
    \end{align*}
\end{prop}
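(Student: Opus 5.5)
The argument is the classical projected subgradient analysis for the concave function $d$ over $\R^m_+$, with $\lambda^*$ as reference point. The plan is to track $\norm{\lambda_t - \lambda^*}^2$ through one iteration, sum the resulting inequality over $t$, and then pass to the averaged iterate $\bar{\lambda}_T$ using concavity.

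First, since $\lambda^* \in \R^m_+$ and projection onto the closed convex set $\R^m_+$ is nonexpansive, we have
\begin{align*}
\norm{\lambda_{t+1} - \lambda^*}^2 \leq \norm{\lambda_t + \alpha g_t - \lambda^*}^2 = \norm{\lambda_t - \lambda^*}^2 + 2\alpha g_t^\top(\lambda_t - \lambda^*) + \alpha^2 \norm{g_t}^2 .
\end{align*}
Next, $g_t \in \partial d(\lambda_t)$ is a supergradient of the concave function $d$. Hence $d(\lambda^*) \leq d(\lambda_t) + g_t^\top(\lambda^* - \lambda_t)$, that is, $g_t^\top(\lambda_t - \lambda^*) \leq d(\lambda_t) - d^*$.

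We also bound $\norm{g_t} \leq G$ directly from the definition~\eqref{eq:G-definition}. This applies because $g_t = \frac{1}{\n}\sum_i A_i x_i^*(\lambda_t) - b$ with $x_i^*(\lambda_t) \in \domhi$.

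Substituting these two facts gives
\begin{align*}
2\alpha\,(d^* - d(\lambda_t)) \leq \norm{\lambda_t - \lambda^*}^2 - \norm{\lambda_{t+1} - \lambda^*}^2 + \alpha^2 G^2 .
\end{align*}
Summing over $t = 0, \dots, T-1$, the right-hand side telescopes. We drop $-\norm{\lambda_T - \lambda^*}^2 \leq 0$ and use $\lambda_0 = 0_m$, which gives $\norm{\lambda_0 - \lambda^*}^2 = \norm{\lambda^*}^2$. Dividing by $2\alpha T$ yields
\begin{align*}
\frac{1}{T}\sum_{t=0}^{T-1} \left(d^* - d(\lambda_t)\right) \leq \frac{\norm{\lambda^*}^2}{2\alpha T} + \frac{\alpha G^2}{2}.
\end{align*}
By concavity of $d$ and Jensen's inequality, $d(\bar{\lambda}_T) \geq \frac{1}{T}\sum_t d(\lambda_t)$, so the left-hand side dominates $d^* - d(\bar{\lambda}_T)$.

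Finally, we substitute $\alpha = \alphabar/(G\sqrt{T})$. This gives $\frac{\norm{\lambda^*}^2}{2\alpha T} = \frac{G\norm{\lambda^*}^2}{2\alphabar\sqrt{T}}$ and $\frac{\alpha G^2}{2} = \frac{G\alphabar}{2\sqrt{T}}$, which is exactly the claimed bound.

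No step here is a real obstacle. The only point needing care is the sign convention: $d$ is concave and we ascend, so the supergradient inequality must be applied in the right direction. Beyond that, one only has to note that $\bar{\lambda}_T \in \R^m_+$ by convexity of the orthant, so $d(\bar{\lambda}_T)$ is a feasible dual value.
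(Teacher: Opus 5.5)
Your proof is correct and follows the same route as the paper's proof: nonexpansiveness of the projection, the supergradient inequality for the concave $d$, the bound $\norm{g_t} \leq G$, telescoping from $\lambda_0 = 0_m$, and Jensen's inequality for $\bar{\lambda}_T$. Your bookkeeping is in fact slightly cleaner than the paper's displayed chain. You keep the factor $1/T$ in $\frac{\norm{\lambda^*}^2}{2\alpha T}$ and the square in $\frac{\alpha G^2}{2}$, both of which the paper drops through typos, and both are needed to obtain the stated bound after substituting $\alpha = \alphabar/(G\sqrt{T})$.
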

\begin{proof}
    We have, for any $t=0, \dots, T-1$,
    \begin{align*}
    \norm{\lambda_{t+1} - \lambda^*}^2 &= \norm{ \ProjPos{\lambda_{t} + \alpha g_t} - \ProjPos{\lambda^*}}^2 \\ 
    &\leq \norm{ \lambda_{t} + \alpha g_t - \lambda^*}^2 \tag{by contractivity of $\ProjPos{\cdot}$}\\
        &= \norm{\lambda_t - \lambda^*}^2 + 2\alpha \iprod{g_t}{\lambda_t - \lambda^*} + \alpha^2 \norm{g_t}^2 \\
        &\leq \norm{\lambda_t - \lambda^*}^2 + 2\alpha (d(\lambda_t) - d(\lambda^*)) + \alpha^2 G^2,
    \end{align*}
    by concavity of $d$. Rearranging gives us
    \begin{align*}
        d(\lambda^*) - d(\lambda_t) &\leq \frac{\norm{\lambda_t - \lambda^*}^2 - \norm{\lambda_{t+1} - \lambda^*}^2}{2\alpha} + \frac{\alpha G}{2}.
    \end{align*}
    We thus get, by concavity of $d$ and Jensen's inequality,
    \begin{align*}
        d(\lambda^*) - d(\bar{\lambda}_T) &\leq \frac{1}{T} \sum_{t=0}^{T-1} (d(\lambda^*) - d(\lambda_t)) \\
        &\leq \frac{1}{T} \sum_{t=0}^{T-1} \left(\frac{\norm{\lambda_t - \lambda^*}^2 - \norm{\lambda_{t+1} - \lambda^*}^2}{2\alpha} + \frac{\alpha G^2}{2}\right)\\
        &\leq \frac{\norm{\lambda_0 - \lambda^*}^2}{2\alpha} + \frac{ \alpha G^2}{2}.
    \end{align*}
    Plugging $\alpha = \alphabar/(G\sqrt{T})$ and $\lambda_0 = 0_m$ yields the result.
\end{proof}

\subsubsection{Primal convergence}
\label{sec:dual-subgradient-primal-convergence}

To establish primal convergence, we rely on the following important proposition.

\begin{prop}[Bounded iterates]
\label{prop:dual-subgradient-bounded-iterates}
Suppose Assumption~\ref{ass:existence-dual-maximizer} holds. Consider Algorithm~\ref{alg:dual-subgradient-separable-primal} with input $\lambda_0 \in \R^m_+$. For any $t=0, \dots T$, it holds that
    \begin{align*}
        \norm{\lambda_{t} - \lambda^*}^2 \leq \norm{\lambda_0 - \lambda^*}^2 + G^2 \sum_{j=0}^{t-1} \alpha_j^2.
    \end{align*}
\end{prop}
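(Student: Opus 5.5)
The plan is to reuse the one-step recursion from the proof of Proposition~\ref{prop:dual-subgradient-dual-convergence}, now with a general stepsize $\alpha_t$, and to drop the dual-suboptimality term by its sign instead of telescoping it.

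\textbf{One-step bound.} Fix $t \in \{0,\dots,T-1\}$. Since $\lambda^* \in \R^m_+$ (Assumption~\ref{ass:existence-dual-maximizer}), we have $\lambda^* = \ProjPos{\lambda^*}$. Contractivity of $\ProjPos{\cdot}$ then gives
\begin{align*}
\norm{\lambda_{t+1}-\lambda^*}^2 \le \norm{\lambda_t-\lambda^*}^2 + 2\alpha_t \iprod{g_t}{\lambda_t-\lambda^*} + \alpha_t^2\norm{g_t}^2 .
\end{align*}
\textbf{Bounding the two terms.} Since $g_t \in \partial d(\lambda_t)$ and $d$ is concave,
\begin{align*}
\iprod{g_t}{\lambda_t-\lambda^*} \le d(\lambda_t)-d(\lambda^*).
\end{align*}
This quantity is $\le 0$ because $\lambda^*$ maximizes $d$ over $\R^m_+$ and $\lambda_t \in \R^m_+$. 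Moreover $\norm{g_t}\le G$ by the definition~\eqref{eq:G-definition} of $G$, since $x_i^*(\lambda_t)\in\domhi$. Together these yield
\begin{align*}
\norm{\lambda_{t+1}-\lambda^*}^2 \le \norm{\lambda_t-\lambda^*}^2 + G^2\alpha_t^2 .
\end{align*}

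\textbf{Conclusion.} A straightforward induction on $t$ (or summing the inequality for indices $0,\dots,t-1$) gives the claim for every $t=0,\dots,T$. The case $t=0$ is trivial because the sum is empty.

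There is no real obstacle. The only point needing care is that the cross term must be discarded using optimality of $\lambda^*$ together with feasibility of $\lambda_t$, so that no dual gap is carried along. This in turn relies on the iterates remaining in $\R^m_+$, which holds because each $\lambda_{t+1}$ is a projection onto the positive orthant and $\lambda_0 \in \R^m_+$.
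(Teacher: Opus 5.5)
Your proposal is correct and follows essentially the same argument as the paper: contractivity of $\ProjPos{\cdot}$, the supergradient inequality for concave $d$, discarding the cross term by optimality of $\lambda^*$, bounding $\norm{g_t}\le G$, and unrolling the recursion. Your explicit remark that the iterates remain in $\R^m_+$, so that $d(\lambda_t)\le d(\lambda^*)$ applies, spells out a step the paper uses only implicitly.
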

\begin{proof}
    For any $t=0,\dots, T-1$,
    \begin{align*}
        \norm{\lambda_{t+1} - \lambda^*}^2 &= \norm{ \ProjPos{\lambda_{t} + \alpha g_t} - \ProjPos{\lambda^*}}^2 \\ &\leq \norm{\lambda_{t} + \alpha_t g_t - \lambda^*}^2 \\
        &= \norm{\lambda_t - \lambda^*}^2 + 2\alpha_t \iprod{g_t}{\lambda_t - \lambda^*} + \alpha_t^2 \norm{g_t}^2 \\
        &\leq \norm{\lambda_t - \lambda^*}^2 + 2\alpha_t (d(\lambda_t) - d(\lambda^*)) + \alpha_t^2 G^2 \\
        &\leq  \norm{\lambda_t - \lambda^*}^2 + \alpha_t^2 G^2,
    \end{align*}
    where the first inequality holds by contractivity of the projection operator, the second one by concavity of $d$ and the last one by optimality of $\lambda^*$. Unrolling the recursion gives the result.
\end{proof}

We now state the convergence result for the primal iterates. This result is not new and was already established in~\cite{nedic2009approximate}, but we repeat its proof for completeness.

\begin{prop}
\label{prop:dual-subgradient-primal-convergence}
    Suppose Assumption~\ref{ass:existence-dual-maximizer} holds. Consider Algorithm~\ref{alg:dual-subgradient-separable-primal} with input $\lambda_0 = 0_m$ and constant stepsize $\alpha_t = \alpha = \frac{\alphabar}{G\sqrt{T}}$ for some $\alphabar > 0$. The output $\bar{x}_T$ then satisfies
    \begin{align*}
        &\norm{\frac{1}{\n} \sum_{i=1}^\n A_i \bar{x}_{i, T} - b}_+ \leq \frac{2G\norm{\lambda^*}}{\alphabar\sqrt{T}} + \frac{G}{\sqrt{T}}\\
        &\frac{1}{\n} \sum_{i=1}^\n h_i(\bar{x}_{i, T}) \leq d(\bar{\lambda}_T)  + \frac{ G \alphabar}{2\sqrt{T}}.
    \end{align*}
\end{prop}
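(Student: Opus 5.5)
The plan is to run the two bounds separately, both resting on the projection recursion and on Proposition~\ref{prop:dual-subgradient-bounded-iterates}.

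\textbf{Feasibility bound.} Since $\lambda_{t+1} = \ProjPos{\lambda_t + \alpha g_t} \geq \lambda_t + \alpha g_t$ componentwise, summing over $t=0,\dots,T-1$ gives
\begin{align*}
\alpha \sum_{t=0}^{T-1} g_t \leq \lambda_T - \lambda_0 = \lambda_T .
\end{align*}
By definition of $g_t$ and linearity,
\begin{align*}
\frac{1}{T}\sum_{t=0}^{T-1} g_t = \frac{1}{\n}\sum_{i=1}^\n A_i \bar{x}_{i,T} - b .
\end{align*}
Because $\norm{\cdot}_+$ is monotone on vectors dominated componentwise by a nonnegative vector, this yields
\begin{align*}
\norm{\frac{1}{\n}\sum_{i} A_i\bar{x}_{i,T} - b}_+ \leq \frac{\norm{\lambda_T}}{\alpha T}.
\end{align*}
Next I bound $\norm{\lambda_T} \leq \norm{\lambda^*} + \norm{\lambda_T-\lambda^*}$. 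Proposition~\ref{prop:dual-subgradient-bounded-iterates} with $\lambda_0=0$ gives
\begin{align*}
\norm{\lambda_T-\lambda^*} \leq \sqrt{\norm{\lambda^*}^2 + G^2\alpha^2 T} \leq \norm{\lambda^*} + G\alpha\sqrt{T}.
\end{align*}
Hence $\norm{\lambda_T}\leq 2\norm{\lambda^*}+\alphabar$. Dividing by $\alpha T = \alphabar\sqrt{T}/G$ gives exactly $\frac{2G\norm{\lambda^*}}{\alphabar\sqrt T} + \frac{G}{\sqrt T}$.

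\textbf{Objective bound.} By convexity of each $h_i$, Jensen gives
\begin{align*}
\frac{1}{\n}\sum_i h_i(\bar x_{i,T}) \leq \frac{1}{T}\sum_t \frac{1}{\n}\sum_i h_i(x_i^*(\lambda_t)).
\end{align*}
By optimality of the oracle, $\frac{1}{\n}\sum_i h_i(x_i^*(\lambda_t)) = d(\lambda_t) - \lambda_t^\top g_t$. I then expand
\begin{align*}
\norm{\lambda_{t+1}}^2 \leq \norm{\lambda_t+\alpha g_t}^2 = \norm{\lambda_t}^2 + 2\alpha\lambda_t^\top g_t + \alpha^2\norm{g_t}^2,
\end{align*}
using contractivity and $\ProjPos{0}=0$. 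This gives
\begin{align*}
-\lambda_t^\top g_t \leq \frac{\norm{\lambda_t}^2-\norm{\lambda_{t+1}}^2}{2\alpha} + \frac{\alpha G^2}{2}.
\end{align*}
Summing and telescoping with $\lambda_0=0$ (and dropping $-\norm{\lambda_T}^2\le 0$) gives
\begin{align*}
\frac{1}{\n}\sum_i h_i(\bar x_{i,T}) \leq \frac{1}{T}\sum_t d(\lambda_t) + \frac{\alpha G^2}{2}.
\end{align*}
The last term equals $\frac{G\alphabar}{2\sqrt T}$.

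\textbf{Main obstacle.} The remaining step is replacing $\frac1T\sum_t d(\lambda_t)$ by $d(\bar\lambda_T)$. Concavity of $d$ gives the inequality in the wrong direction: $d(\bar\lambda_T)\geq \frac1T\sum_t d(\lambda_t)$. Since the average is at most $d(\bar\lambda_T)$, the chain $\text{LHS}\le \frac1T\sum_t d(\lambda_t)+\cdots \le d(\bar\lambda_T)+\cdots$ goes through anyway, because both steps are upper bounds. So concavity is exactly what is needed, and the bound follows.

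I expect the feasibility part, specifically the componentwise monotonicity of $\norm{\cdot}_+$ combined with the iterate bound, to be the only genuinely delicate point.
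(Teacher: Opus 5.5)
Your proposal is correct and follows the paper's proof essentially step for step. For infeasibility, both use the componentwise telescoping $\alpha\sum_t g_t \le \lambda_T - \lambda_0$ together with Proposition~\ref{prop:dual-subgradient-bounded-iterates}; for suboptimality, both use Jensen, the identity $\frac{1}{\n}\sum_i h_i(x_i^*(\lambda_t)) = d(\lambda_t) - \lambda_t^\top g_t$, and the $\norm{\lambda_t}^2$ telescoping, and your ``main obstacle'' is not actually one, since concavity gives $\frac{1}{T}\sum_t d(\lambda_t) \le d(\bar{\lambda}_T)$, exactly the direction the paper uses directly.
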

\begin{proof}
    We have
    \begin{align*}
        \frac{1}{\n} \sum_{i=1}^\n A_i \bar{x}_{i, T} - b = \frac{1}{\n}\sum_{i=1}^\n  \frac{1}{T} A_i\sum_{t=0}^{T-1} x_i^*(\lambda_t) - b 
        = \frac{1}{T} \sum_{t=0}^{T-1} \left(\frac{1}{\n} \sum_{i=1}^\n A_i x_i^*(\lambda_t) - b \right).
        \end{align*}
Observe that $\frac{1}{\n} \sum_{i=1}^\n A_i x_i^*(\lambda_t) - b$ is precisely the subgradient $g_t$ computed at iteration $t$ in Algorithm~\ref{alg:dual-subgradient-separable-primal}. Thus, letting $\tilde{\lambda}_{t+1} = \lambda_t + \alpha g_t$, we have
\begin{align*}
        \frac{1}{\n} \sum_{i=1}^\n A_i \bar{x}_{i, T} - b &= \frac{1}{T} \sum_{t=0}^{T-1} g_t = \frac{1}{T\alpha} \sum_{t=0}^{T-1} (\tilde{\lambda}_{t+1} - \lambda_t).
    \end{align*}
    Observe that since $\lambda_{t+1} = \ProjPos{\tilde{\lambda}_{t+1}}$, we have $\tilde{\lambda}_{t+1} \leq \lambda_{t+1}$, where the inequality is to be understood component-wise. Thus,
    \begin{align}
    \label{eq:proof-dual-subgradient-telescoping-1}
        \frac{1}{\n} \sum_{i=1}^\n A_i \bar{x}_{i, T} - b &\leq \frac{1}{T\alpha} \sum_{t=0}^{T-1} (\lambda_{t+1} - \lambda_t) = \frac{\lambda_T - \lambda_0}{T\alpha} \leq \frac{\lambda_T}{T\alpha},
    \end{align}
    since $\lambda_0 \geq 0$. Observe then that $\frac{\lambda_T}{T\alpha} \geq 0$ and we can thus deduce that
    \begin{align*}
        \norm{\frac{1}{\n} \sum_{i=1}^\n A_i \bar{x}_{i, T} - b}_+ &\leq \frac{\norm{\lambda_T}}{T\alpha}\\
        &\leq \frac{\norm{\lambda_T - \lambda^*} + \norm{\lambda^*}}{T\alpha} \\
        &\leq  \frac{\norm{\lambda_0 - \lambda^*} + G \sqrt{T} \alpha + \norm{\lambda^*}}{T\alpha} \nonumber \tag{Prop.~\ref{prop:dual-subgradient-bounded-iterates}}.
    \end{align*}
    Plugging $\alpha = \alphabar/(G\sqrt{T})$ and $\lambda_0 = 0_m$ yields the infeasibility result. For primal suboptimality, we have
    \begin{align}
    \begin{split}
        \frac{1}{\n}\sum_{i=1}^\n h_i(\bar{x}_{i, T}) &= \frac{1}{\n} \sum_{i=1}^\n h_i\left(\frac{1}{T} \sum_{t=0}^{T-1} x_i^*(\lambda_t) \right) \\
        &\leq \frac{1}{T } \sum_{t=0}^{T-1} \frac{1}{\n} \sum_{i=1}^\n h_i(x_i^*(\lambda_t)) \\
        &= \frac{1}{T } \sum_{t=0}^{T-1} \underbrace{\frac{1}{\n} \sum_{i=1}^\n\left(h_i(x_i^*(\lambda_t)) + \lambda_t^\top(A_ix_i^*(\lambda_t) - b)  \right)}_{=d(\lambda_t)} - \frac{1}{T } \sum_{t=0}^{T-1} \frac{1}{\n} \sum_{i=1}^\n \lambda_t^\top (A_ix_i^*(\lambda_t)  - b)  \\
        &= \frac{1}{T} \sum_{t=0}^{T-1} d(\lambda_t) - \frac{1}{T } \sum_{t=0}^{T-1} \frac{1}{\n} \sum_{i=1}^\n  \lambda_t^\top (A_ix_i^*(\lambda_t)  - b) \\
        &\leq d(\bar{\lambda}_T)- \frac{1}{T } \sum_{t=0}^{T-1} \lambda_t^\top \left( \frac{1}{\n} \sum_{i=1}^\n A_ix_i^*(\lambda_t)  - b\right).
        \end{split}
        \label{eq:proof-dual-subgradient-primal-1}
    \end{align}
    Now, recall that $\lambda_{t+1} = \ProjPos{\lambda_t + \alpha g_t} = \ProjPos{\lambda_t + \alpha (\frac{1}{\n} \sum_{i=1}^\n A_i x_i^*(\lambda_t) - b)}$. Thus
    \begin{align*}
        \norm{\lambda_{t+1}}^2 \leq \norm{\lambda_t + \alpha g_t} \leq \norm{\lambda_t}^2 + 2 \alpha \lambda_t^\top \left(\frac{1}{\n} \sum_{i=1}^\n A_i x_i^*(\lambda_t) - b\right) + \alpha^2 G^2.
    \end{align*}
    Rearranging gives us
    \begin{align*}
        - \lambda_t^\top \left(\frac{1}{\n} \sum_{i=1}^\n A_i x_i^*(\lambda_t) - b\right) \leq \frac{\norm{\lambda_t}^2 - \norm{\lambda_{t+1}}^2}{2\alpha} + \frac{\alpha G^2}{2},
    \end{align*}
    and thus
    \begin{align}
        - \frac{1}{T } \sum_{t=0}^{T-1} \lambda_t^\top \left( \frac{1}{\n} \sum_{i=1}^\n A_ix_i^*(\lambda_t)  - b\right) &\leq \frac{1}{T} \sum_{t=0}^{T-1} \left( \frac{\norm{\lambda_t}^2 - \norm{\lambda_{t+1}}^2}{2\alpha} + \frac{\alpha G^2}{2} \right) \label{eq:proof-dual-subgradient-telescoping-sum-2}\\
        &\leq \frac{\norm{\lambda_0}^2}{2T\alpha} + \frac{\alpha G^2}{2}. 
        \nonumber
    \end{align}
    Plugging this back into~\eqref{eq:proof-dual-subgradient-primal-1}, we get
    \begin{align*}
        \frac{1}{\n}\sum_{i=1}^\n h_i(\bar{x}_{i, T}) \leq d(\bar{\lambda}_T) + \frac{\norm{\lambda_0}^2}{2T\alpha} + \frac{\alpha G^2}{2}.
    \end{align*}
    Plugging $\alpha = \alphabar/(G\sqrt{T})$ and $\lambda_0 = 0_m$ in the above finishes the proof.
\end{proof}

\subsubsection{Total complexity}
We are now ready to state the complexity to reach $\epsilon$-accurate solutions with respect to the number of calls to oracle~\eqref{eq:oracle-definition}. We do so in the next proposition. We only keep $N$ and $T$ in our complexity bounds, and hide all other terms appearing in the previous rates within the $O$-notation.

\begin{prop}
\label{prop:epsilon-complexity-dual-subgradient}
    Suppose Assumption~\ref{ass:existence-dual-maximizer} holds, and consider Algorithm~\ref{alg:dual-subgradient-separable-primal} applied to problem~\eqref{eq:primal} with constant stepsize $\alpha_t = \alpha = \frac{\alphabar}{G\sqrt{T}}$.

    \textbf{Dual complexity:} the number of calls to the oracle~\eqref{eq:oracle-definition} to reach an $\epsilon$-accurate dual solution is bounded above by
    \begin{align}
    \label{eq:epsilon-dual-complexity-dual-subgradient}
        O \left( \frac{N}{\epsilon^2}\right).
    \end{align}
    \textbf{Primal complexity:} the number of calls to the oracle~\eqref{eq:oracle-definition} to reach an $\epsilon$-accurate primal solution is bounded above by
    \begin{align}
    \label{eq:epsilon-primal-complexity-dual-subgradient}
        O \left( \frac{N}{\epsilon^2}\right).
    \end{align}
\end{prop}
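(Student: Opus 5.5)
The plan is to combine the per-iteration cost of Algorithm~\ref{alg:dual-subgradient-separable-primal} with the rates of Propositions~\ref{prop:dual-subgradient-dual-convergence} and~\ref{prop:dual-subgradient-primal-convergence}. Each outer iteration $t$ makes exactly $\n$ calls to oracle~\eqref{eq:oracle-definition}, one per agent, to compute $x_i^*(\lambda_t)$. Running $T$ iterations therefore costs $\n T$ oracle calls. Forming $\bar{\lambda}_T$ and $\bar{x}_{i,T}$ requires no extra calls, since these are running averages of quantities already computed. It thus suffices to show that $T = O(1/\epsilon^2)$ iterations are enough for both the dual and the primal accuracy requirements. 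Throughout, we take $\lambda_0 = 0_m$, as in the cited propositions, and treat $G$, $\norm{\lambda^*}$ and $\alphabar$ as constants hidden in the $O$-notation.

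\textbf{Dual part.} By Proposition~\ref{prop:dual-subgradient-dual-convergence},
\[
d^* - d(\bar{\lambda}_T) \leq \frac{C_1}{\sqrt{T}}, \qquad C_1 = \frac{G\norm{\lambda^*}^2}{2\alphabar} + \frac{G\alphabar}{2}.
\]
Choosing $T = \lceil C_1^2/\epsilon^2 \rceil$ makes $\bar{\lambda}_T$ an $\epsilon$-accurate dual solution. It is feasible because every iterate is a projection onto $\R^m_+$, and $\R^m_+$ is convex. The resulting number of oracle calls is $\n T = O(\n/\epsilon^2)$.

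\textbf{Primal part.} Each $\bar{x}_{i,T}$ is a convex combination of points of $\domhi$, so it lies in $\domhi$ because $\domhi$ is convex. For the constraint, Proposition~\ref{prop:dual-subgradient-primal-convergence} gives
\[
\norm{\frac{1}{\n}\sum_{i=1}^\n A_i\bar{x}_{i,T} - b}_+ \leq \frac{C_2}{\sqrt{T}}, \qquad C_2 = \frac{2G\norm{\lambda^*}}{\alphabar} + G.
\]
For the objective, we chain the two propositions through the intermediate quantity $d(\bar{\lambda}_T)$. Since $d(\bar{\lambda}_T) \leq d^*$ by optimality of $\lambda^*$ (equivalently, weak duality), we get
\[
\frac{1}{\n}\sum_{i=1}^\n h_i(\bar{x}_{i,T}) \leq d(\bar{\lambda}_T) + \frac{G\alphabar}{2\sqrt{T}} \leq d^* + \frac{G\alphabar}{2\sqrt{T}}.
\]
Taking $T = \lceil \max(C_2, G\alphabar/2)^2/\epsilon^2 \rceil$ gives an $\epsilon$-accurate primal solution, again at a cost of $O(\n/\epsilon^2)$ oracle calls.

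\textbf{Main subtlety.} The stepsize $\alpha = \alphabar/(G\sqrt{T})$ depends on the horizon $T$, so $T$ must be fixed in advance as a function of $\epsilon$. This causes no difficulty, because both bounds hold for the chosen $T$. Beyond that, the argument only requires noting that a single run with $T$ equal to the larger of the two thresholds achieves both accuracies simultaneously, at total cost $O(\n/\epsilon^2)$.
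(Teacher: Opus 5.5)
Your proposal is correct and follows the same route as the paper. You apply Propositions~\ref{prop:dual-subgradient-dual-convergence} and~\ref{prop:dual-subgradient-primal-convergence} to get $O(1/\sqrt{T})$ bounds on dual suboptimality, primal infeasibility and primal suboptimality (the last using $d(\bar{\lambda}_T) \leq d^*$), take $T = O(1/\epsilon^2)$, and multiply by the $N$ oracle calls per iteration. You are simply more explicit than the paper about the constants, the feasibility of $\bar{\lambda}_T$ and $\bar{x}_{i,T}$, and the horizon-dependent stepsize.
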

\begin{proof}
    Applying Propositions~\ref{prop:dual-subgradient-dual-convergence} and~\ref{prop:dual-subgradient-primal-convergence} we have, in $O$-notation,
    \begin{align*}
    &d^* - d(\bar{\lambda}_T) \leq O\left(\frac{1}{\sqrt{T}}\right), \\
        &\norm{\frac{1}{\n} \sum_{i=1}^\n A_i \bar{x}_{i, T} - b}_+ \leq O\left(\frac{1}{\sqrt{T}}\right), \\
        &\frac{1}{\n} \sum_{i=1}^\n h_i(\bar{x}_{i, T}) \leq d^* + O\left( \frac{1}{\sqrt{T}}\right).
    \end{align*}
    In particular, we need $T \geq O(1/\epsilon^2)$ iterations to reach $\epsilon$-precision for all inequalities. Since each iteration requires $\n$ calls to oracle~\eqref{eq:oracle-definition}, this proves the claim.
\end{proof}

The complexity in terms of number of calls to the oracle~\eqref{eq:oracle-definition} scaling with $\n$ is prohibitive in many large-scale applications. The goal of this work is to achieve smaller complexity. We describe our approach on how to do so in the next section. 

\section{The two-stage algorithm}
\label{sec:proposed-algorithm}

We focus again on solving the original separable problem~\eqref{eq:primal}. We saw in the previous section that the dual subgradient algorithm requires $O(\frac{\n}{\epsilon^2})$ calls to oracle~\eqref{eq:oracle-definition} to reach an $\epsilon$-accurate solution of~\eqref{eq:primal}. Our goal is to improve on this complexity, and we do so through a two-stage method. The first stage consists in running a stochastic subgradient algorithm in order to obtain an approximation of the dual optimal value. Although this algorithm, just as its deterministic counterpart, produces primal points, we shall see that those points do not provide better candidates than in the deterministic case. However, the convergence rate for the dual is much better than in the deterministic setting. We detail this algorithm as well as its theoretical guarantees in Subsection~\ref{sec:stochastic-dual-subgradient}. Once the approximate dual optimal value is obtained, Subsection~\ref{sec:BCFW} focuses on obtaining primal points through a block-coordinate Frank-Wolfe algorithm.

\subsection{Stage 1: stochastic dual subgradient}
\label{sec:stochastic-dual-subgradient}

The stochastic dual subgradient algorithm for~\eqref{eq:primal} is a simple extension of the dual subgradient Algorithm~\ref{alg:dual-subgradient-separable-primal} where, at each iteration, instead of computing the oracle~\eqref{eq:oracle-definition} for all indices $i =1, \dots, \n$, we simply sample one index uniformly at random, compute the oracle only for that index, form the corresponding stochastic subgradient, and take a positive step in that direction.  We summarize the method in Algorithm~\ref{alg:stochastic-dual-subgradient-separable-primal}.

Let us mention one important difference here. As in the deterministic counterpart, we also obtain primal points through oracle~\eqref{eq:oracle-definition}, which we wish to combine to obtain a primal solution. Letting $I_i$ be the number of times we sample index $i$, a natural primal candidate $\bar{x}_{i, T}$ would be the average of the $I_i$ oracle outputs. The issue is that there might be some index $i$ which never gets sampled, i.e., $I_i = 0$. We circumvent this by taking a deterministic subgradient step on the last iteration, thus ensuring that we have at least one primal candidate for each index $i$.

\begin{algorithm}
\caption{Stochastic dual subgradient for problem~\eqref{eq:primal}}
\label{alg:stochastic-dual-subgradient-separable-primal}
\begin{algorithmic}[1]
\STATE{\textbf{Input}: $\lambda_0 \in \R^m$, number of iterations $T$.} 
\STATE{Set $I_i = 0$ for all $i=1, \dots, \n$.}
\FOR{$t=0, \dots, T-2$}
\STATE{Sample $i_t \in \{1, \dots, \n\}$ uniformly at random.}
\STATE{Update $I_{i_t} \leftarrow I_{i_t} + 1$.}
\STATE{Compute
\begin{align}
\label{eq:stochastic-dual-subgradient-oracle}
    x_{i_t}^*(\lambda_t) \in \argmin_{x_{i_t} \in X_{i_t}} \{ h_{i_t}(x_{i_t}) + \lambda_t^\top A_{i_t}x_{i_t}\}
\end{align}
}
\STATE{Set $g_t = A_{i_t} x_{i_t}^*(\lambda_t) - b$.}
\STATE{Pick some stepsize $\alpha_t > 0$.}
\STATE{Set $\lambda_{t+1} = \ProjPos{\lambda_t + \alpha_t g_t}$}
\ENDFOR
\STATE{\textbf{Last step:} For $i=1,\dots, \n$, compute
\begin{align*}
    x_{i}^*(\lambda_{T-1}) \in \argmin_{x_i \in \domhi} \{h_i(x_i) + \lambda_{T-1}^\top A_ix_i \}
\end{align*}
\STATE{Set $g_{T-1} = \frac{1}{\n} \sum_{i=1}^\n A_i x_{i}^*(\lambda_{T-1}) - b$.}
\STATE{Pick some stepsize $\alpha_{T-1} > 0$.}
\STATE{Set $\lambda_{T} = \ProjPos{\lambda_{T-1} + \alpha_{T-1} g_{T-1}}$}
}
\STATE{\textbf{Return:} Dual candidate $\bar{\lambda}_T = \frac{1}{T} \sum_{t=0}^{T-1} \lambda_t$ and primal candidate
\begin{align}
\label{eq:stochastic-dual-subgradient-primal-output}
    \bar{x}_{i, T} = \frac{1}{I_i + 1} \left( x_i^*(\lambda_{T-1}) +  \sum_{t\in \{0, \dots, T-2\}, \atop i_t = i} x_i^*(\lambda_t)\right), \ i=1,\dots, \n. 
\end{align}
}
\end{algorithmic}
\end{algorithm}

Before we move on to the convergence of this method, we define the following quantities of interest:
\begin{align*}
    \Gtilde &:= \sup_{i=1,\dots, \n, x_i\in \domhi} \norm{A_i x_i - b},\\
    H &:= \sup_{i=1,\dots, \n, x_i \in \domhi} \abs{h_i(x_i)}
\end{align*}

\begin{remark}
    Comparing with the definition of $G$ in~\eqref{eq:G-definition}, observe that $G$ and $\Gtilde$ have the same scale, and although we always have $G \leq \Gtilde$, we expect their values to be close in practical applications. In particular, when the matrices $A_i$ and the sets $X_i$ are identical, it holds that $G = \Gtilde$.
\end{remark}

\subsubsection{Dual convergence}

In this section we provide a full characterization of the dual convergence of Algorithm~\ref{alg:stochastic-dual-subgradient-separable-primal}. We emphasize that our analysis is not new and that the convergence of stochastic subgradient descent has been extensively studied (see for example~\cite{polyak1987introduction,nemirovski1978cezari,nemirovskij1983problem}), but we provide it for completeness. We denote $\Eone$ as the expectation with respect to the randomness induced by Algorithm~\ref{alg:stochastic-dual-subgradient-separable-primal}.

\begin{prop}
\label{prop:dual-convergence-stochastic-dual-subgradient}
    Suppose Assumption~\ref{ass:existence-dual-maximizer} holds. Consider Algorithm~\ref{alg:stochastic-dual-subgradient-separable-primal} applied to problem~\eqref{eq:primal} with input $\lambda_0 = 0_m$ and constant stepsize $\alpha_t = \alpha = \frac{\alphabar}{\Gtilde \sqrt{T}}$ for some $\alphabar > 0$. It then holds that
    \begin{align*}
        \Eone[d(\lambda^*) - d(\bar{\lambda}_T)] \leq \frac{\Gtilde \norm{ \lambda^*}^2}{2 \alphabar\sqrt{T}} + \frac{\Gtilde \alphabar}{2\sqrt{T}}.
    \end{align*}
\end{prop}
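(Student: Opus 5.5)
We will mirror the proof of Proposition~\ref{prop:dual-subgradient-dual-convergence}, replacing the deterministic subgradient by a stochastic one that is unbiased conditionally on the past. Let $\mathcal{F}_t$ denote the sigma-algebra generated by $i_0,\dots,i_{t-1}$, so that $\lambda_t$ is $\mathcal{F}_t$-measurable. The key observation is that for $t \le T-2$, since $i_t$ is uniform on $\{1,\dots,\n\}$ and independent of $\mathcal{F}_t$,
\begin{align*}
    \Eone[g_t \mid \mathcal{F}_t] = \frac{1}{\n}\sum_{i=1}^\n A_i x_i^*(\lambda_t) - b \in \partial d(\lambda_t),
\end{align*}
by~\eqref{eq:subgradient-of-dual-function-definition}. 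The last step $t=T-1$ uses an exact subgradient, so the same identity holds trivially there. Moreover, $\norm{g_t}\le \Gtilde$ almost surely for $t\le T-2$, and $\norm{g_{T-1}} \le G \le \Gtilde$.

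For each $t=0,\dots,T-1$ we expand exactly as before, using contractivity of $\ProjPos{\cdot}$ and $\lambda^* = \ProjPos{\lambda^*}$:
\begin{align*}
    \norm{\lambda_{t+1}-\lambda^*}^2 \le \norm{\lambda_t-\lambda^*}^2 + 2\alpha\iprod{g_t}{\lambda_t-\lambda^*} + \alpha^2\Gtilde^2.
\end{align*}
Taking the conditional expectation given $\mathcal{F}_t$, the cross term becomes $2\alpha\iprod{\Eone[g_t\mid\mathcal{F}_t]}{\lambda_t-\lambda^*}$, which is at most $2\alpha(d(\lambda_t)-d(\lambda^*))$ by concavity of $d$, since the conditional mean is a supergradient at $\lambda_t$. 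Taking total expectations then gives
\begin{align*}
    \Eone[d(\lambda^*)-d(\lambda_t)] \le \frac{\Eone\norm{\lambda_t-\lambda^*}^2-\Eone\norm{\lambda_{t+1}-\lambda^*}^2}{2\alpha} + \frac{\alpha\Gtilde^2}{2}.
\end{align*}

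Summing over $t=0,\dots,T-1$ telescopes the distance terms. We drop $\Eone\norm{\lambda_T-\lambda^*}^2\ge 0$, divide by $T$, and use Jensen's inequality for the concave $d$, so that $d(\bar\lambda_T)\ge \frac1T\sum_t d(\lambda_t)$ pointwise. This yields
\begin{align*}
    \Eone[d^*-d(\bar\lambda_T)] \le \frac{\norm{\lambda_0-\lambda^*}^2}{2\alpha T} + \frac{\alpha\Gtilde^2}{2}.
\end{align*}
Plugging in $\lambda_0=0_m$ and $\alpha=\alphabar/(\Gtilde\sqrt T)$ gives the stated bound.

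The only delicate point is justifying the conditional-expectation step. It requires that $\lambda_t - \lambda^*$ be $\mathcal{F}_t$-measurable, which lets it be pulled out of the conditional expectation. It also requires that a measurable selection of $x_i^*(\lambda_t)$ be used; this is harmless if the oracle is taken to be a deterministic function of $\lambda$. The last deterministic step has to be checked separately, but it fits the same inequality because $G\le\Gtilde$.
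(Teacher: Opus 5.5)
Your proposal is correct and follows the paper's proof essentially step for step. It uses the same projected-distance expansion and the conditional unbiasedness $\Eone^t[g_t]\in\partial d(\lambda_t)$ for $t\le T-2$, handles the deterministic last step separately via $G\le\Gtilde$, and finishes with telescoping and Jensen's inequality for the concave $d$. Your remarks on measurability (a deterministic oracle selection, and $\lambda_t$ being $\mathcal{F}_t$-measurable) spell out details the paper leaves implicit.
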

\begin{proof}
    For $t=0, \dots, T-2$, we have
    \begin{align*}
        \norm{\lambda_{t+1} - \lambda^*}^2 &=  \norm{\ProjPos{\lambda_{t} + \alpha g_t} - \ProjPos{\lambda^*}}^2 \\
        &\leq \norm{\lambda_{t} + \alpha g_t - \lambda^*}^2 \\
        &= \norm{\lambda_t - \lambda^*}^2 + 2\alpha \iprod{g_t}{\lambda_t - \lambda^*} + \alpha^2 \norm{g_t}^2 \\
        &\leq \norm{\lambda_t - \lambda^*}^2 + 2\alpha \iprod{g_t}{\lambda_t - \lambda^*} + \alpha_t^2 \Gtilde^2.
    \end{align*}
    Letting $\Eone^t$ be the expectation conditioned on the randomness up to iteration $t$, we have
    \begin{align*}
        \Eone^t[g_t] = \sum_{i=1}^\n \frac{1}{\n} A_i x_i^*(\lambda_t) - b \in \partial d(\lambda_t).
    \end{align*}
    Therefore we have, for $t=0, \dots, T-2$,
    \begin{align}
    \label{eq:proof-stochastic-dual-subgradient-dual-convergence-1}
        \Eone^t\norm{\lambda_{t+1} - \lambda^*}^2 &\leq \norm{\lambda_t - \lambda^*}^2 + 2\alpha (d(\lambda_t) - d(\lambda^*)) + \alpha^2 \Gtilde^2,
    \end{align}
    by concavity of $d$. Now, for the last iteration $t=T-1$, we have
    \begin{align*}
        \norm{\lambda_{T} - \lambda^*}^2 &=  \norm{\ProjPos{\lambda_{T-1} + \alpha g_{T-1}} - \ProjPos{\lambda^*}}^2 \\
        &\leq \norm{\lambda_{T-1} + \alpha g_{T-1} - \lambda^*}^2 \\
        &= \norm{\lambda_{T-1} - \lambda^*}^2 + 2\alpha \iprod{g_t}{\lambda_{T-1} - \lambda^*} + \alpha^2 \norm{g_{T-1}}^2 \\ 
        &\leq  \norm{\lambda_{T-1} - \lambda^*}^2 + 2\alpha (d(\lambda_{T-1}) - d(\lambda^*))  + \alpha^2 G^2,
    \end{align*}
    by concavity of $d$. Note that in that case we did not need to take expectation since $g_{T-1} \in \partial d(\lambda_{T-1})$. Since $G \leq \Gtilde$, combining with~\eqref{eq:proof-stochastic-dual-subgradient-dual-convergence-1}, we get that for any $t=0,\dots, T-1,$
    \begin{align*}
        \Eone^t\norm{\lambda_{t+1} - \lambda^*}^2 &\leq \norm{\lambda_t - \lambda^*}^2 + 2\alpha (d(\lambda_t) - d(\lambda^*)) + \alpha^2 \Gtilde^2.
    \end{align*}
    Rearranging and taking total expectation gives
    \begin{align*}
        \Eone[d(\lambda^*) - d(\lambda_t)] &\leq \frac{\Eone\norm{\lambda_t - \lambda^*}^2 - \Eone\norm{\lambda_{t+1} - \lambda^*}^2}{2\alpha} + \frac{\alpha \Gtilde^2}{2}.
    \end{align*}
    Concavity of $d$ then yields
    \begin{align*}
        \Eone[d(\lambda^*) - d(\bar{\lambda}_T)] &\leq \frac{1}{T} \sum_{t=0}^{T-1} \Eone[d(\lambda^*) - d(\lambda_t)] \\
        &\leq \frac{1}{2T} \sum_{t=0}^{T-1} \left( \frac{\Eone\norm{\lambda_t - \lambda^*}^2 - \Eone\norm{\lambda_{t+1} - \lambda^*}^2}{\alpha} + \alpha \Gtilde^2 \right) \\
        &\leq \frac{\norm{\lambda_0 - \lambda^*}^2}{2T\alpha} + \frac{\alpha \Gtilde^2}{2}.
    \end{align*}
    Plugging $\alpha = \frac{\alphabar}{\tilde{G}\sqrt{T}}$ and $\lambda_0 = 0_m$ yields the result.
\end{proof}

\subsubsection{Primal convergence}

We now turn our attention to the convergence of the primal candidate. To the best of our knowledge, this is the first result showing convergence of the primal iterates produced by a stochastic dual subgradient algorithm under our assumptions.

\begin{prop}
\label{prop:primal-convergence-stochastic-dual-subgradient}
    Suppose Assumption~\ref{ass:existence-dual-maximizer} holds. Consider Algorithm~\ref{alg:stochastic-dual-subgradient-separable-primal} applied to problem~\eqref{eq:primal} with input $\lambda_0 = 0_m$ and constant stepsize $\alpha_t = \alpha = \frac{\alphabar}{\Gtilde \sqrt{T}}$ for some $\alphabar > 0$. The primal output $\bar{x}_T$ then satisfies
    \begin{align*}
        \Eone \left[\norm{\frac{1}{\n} \sum_{i=1}^\n A_i \bar{x}_{i, T} - b }_+\right] \leq \Gtilde \sqrt{\frac{N-1}{T}} + \frac{ 2 \Gtilde\norm{\lambda^*}}{\alphabar\sqrt{T}} + \frac{\Gtilde}{\sqrt{T}} + \frac{\n \Gtilde}{T},
    \end{align*}
    and
    \begin{align*}
        \Eone \left[\frac{1}{\n} \sum_{i=1}^\n h_i(\bar{x}_{i, T}) \right] \leq d^* + H \sqrt{\frac{\n - 1}{T}} + \frac{\alphabar \Gtilde}{2\sqrt{T}} + \frac{\n \Gtilde(2\norm{\lambda^*}  + \alphabar)}{T}
    \end{align*}
\end{prop}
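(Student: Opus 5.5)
The plan is to compare the random primal candidate with an idealized candidate that behaves like the deterministic average, and then reuse the telescoping arguments from Proposition~\ref{prop:dual-subgradient-primal-convergence}. Write $y_t := \frac{1}{\n}\sum_{i=1}^\n A_i x_i^*(\lambda_t) - b = \Eone^t[g_t]$ for the full subgradient at $\lambda_t$. The key identity is
\begin{align*}
\frac{1}{\n}\sum_{i=1}^\n A_i\bar{x}_{i,T} - b = \frac{1}{\n}\sum_{i=1}^\n \frac{1}{I_i+1}\Big(A_i x_i^*(\lambda_{T-1}) + \sum_{t\le T-2,\, i_t=i} A_i x_i^*(\lambda_t)\Big) - b .
\end{align*}
If every $I_i+1$ were replaced by $T/\n$, this would equal $\frac{1}{T}\sum_{t=0}^{T-2} g_t + \frac{\n}{T}\cdot\frac{1}{\n}\sum_i (A_i x_i^*(\lambda_{T-1})-b) = \frac{1}{T}\sum_{t=0}^{T-2}g_t + \frac{\n}{T} g_{T-1}$.

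The first step is to decompose the constraint residual into two pieces. One is the ``stochastic-subgradient average'' $\frac{1}{T}\big(\sum_{t\le T-2} g_t + g_{T-1}\big)$, which telescopes exactly as in~\eqref{eq:proof-dual-subgradient-telescoping-1} using $\tilde\lambda_{t+1}\le\lambda_{t+1}$. Bounding $\norm{\lambda_T}$ in expectation through the analogue of Proposition~\ref{prop:dual-subgradient-bounded-iterates}, which follows from the recursion in the proof of Proposition~\ref{prop:dual-convergence-stochastic-dual-subgradient} together with Jensen, gives the terms $\frac{2\Gtilde\norm{\lambda^*}}{\alphabar\sqrt T}+\frac{\Gtilde}{\sqrt T}$. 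The leftover weight $(\n-1)/T$ on $g_{T-1}$ has norm at most $G\le\Gtilde$, which gives the $\n\Gtilde/T$ term.

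The other piece is the error from replacing $\frac{1}{I_i+1}$ by $\frac{\n}{T}$. Writing each per-agent sum as $\sum_{t\in S_i}(A_ix_i^*(\lambda_t)-b)$ plus the last term, the error is
\begin{align*}
\frac{1}{\n}\sum_{i=1}^\n\Big(\frac{1}{I_i+1}-\frac{\n}{T}\Big)\sum_{t\in S_i\cup\{T-1\}}(A_ix_i^*(\lambda_t)-b),
\end{align*}
whose norm is at most $\frac{\Gtilde}{\n}\sum_i \abs{1-\frac{\n(I_i+1)}{T}}$. Since $I_i\sim\mathrm{Bin}(T-1,1/\n)$, Cauchy--Schwarz gives $\Eone\abs{I_i+1-T/\n}\le\sqrt{\mathrm{Var}(I_i)}+\abs{1-1/\n}$. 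With $\mathrm{Var}(I_i)\le (T-1)\frac{\n-1}{\n^2}$, this yields $\Gtilde\sqrt{(\n-1)/T}$ plus an $O(\n\Gtilde/T)$ remainder. That remainder can be absorbed, with some care about constants, into the $\n\Gtilde/T$ term.

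The objective bound follows the same pattern. Jensen per agent gives $h_i(\bar x_{i,T})\le \frac{1}{I_i+1}\sum_{t\in S_i\cup\{T-1\}} h_i(x_i^*(\lambda_t))$. Replacing the weights by $\n/T$ costs $H\sqrt{(\n-1)/T}$ by the identical binomial argument, with $\abs{h_i}\le H$. The idealized sum is $\frac{1}{T}\sum_t \Eone^t[h_{i_t}(x^*_{i_t}(\lambda_t))]$ plus the last full step. Adding and subtracting $\lambda_t^\top g_t$ turns it into $\frac1T\sum_t d(\lambda_t) - \frac1T\sum_t\lambda_t^\top g_t$, using $\Eone^t[h_{i_t}+\lambda_t^\top g_t]=d(\lambda_t)$. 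The $\lambda_t^\top g_t$ terms telescope as in~\eqref{eq:proof-dual-subgradient-telescoping-sum-2} with $\Gtilde$, giving $\frac{\alphabar\Gtilde}{2\sqrt T}$, and $\frac1T\sum_t d(\lambda_t)\le d^*$. The extra $(\n-1)/T$ weight on the last iterate contributes $\frac{\n-1}{T}\big(d(\lambda_{T-1}) - \lambda_{T-1}^\top g_{T-1}\big)$. We bound $d(\lambda_{T-1})$ by $d^*$ and $-\lambda_{T-1}^\top g_{T-1}$ by $G\,\Eone\norm{\lambda_{T-1}}\le G(2\norm{\lambda^*}+\alphabar)$ via the bounded-iterates estimate, which produces $\frac{\n\Gtilde(2\norm{\lambda^*}+\alphabar)}{T}$.

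The main obstacle is the reweighting step. Here the random normalizers $1/(I_i+1)$ are correlated with the iterates $x_i^*(\lambda_t)$, so we cannot take conditional expectations through them. The plan avoids this by using only the deterministic bounds $\Gtilde$ and $H$ on each summand and controlling the weights by their binomial variance. Getting the exact constants $\sqrt{(\n-1)/T}$ may require bounding $\Eone\abs{\frac{\n(I_i+1)}{T}-1}$ more carefully, for instance by working with the weights $I_i+1$ directly.
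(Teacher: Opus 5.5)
Your plan has the same architecture as the paper's proof: an idealized-weight term handled by the deterministic telescoping, plus a reweighting error controlled by the binomial variance of $I_i$. The problem is the centering. You center the normalizers at $T/N$ instead of at $\Eone[I_i+1]=\frac{T-1+N}{N}$, and this breaks the stated bounds in two places.

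\textbf{First, the reweighting error picks up a bias.} By your own computation, $\Eone\abs{1-\frac{N(I_i+1)}{T}}\le\sqrt{\frac{N-1}{T}}+\frac{N-1}{T}$. So the infeasibility error carries an extra $\frac{(N-1)\Gtilde}{T}$. Together with the leftover $\frac{N-1}{T}g_{T-1}$, this gives about $\frac{2(N-1)\Gtilde}{T}$ rather than $\frac{N\Gtilde}{T}$, and nothing in your decomposition absorbs it. In the objective part you assert that the reweighting costs only $H\sqrt{(N-1)/T}$ ``by the identical binomial argument.'' That same argument also produces $\frac{(N-1)H}{T}$.

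\textbf{Second, your idealized weights do not sum to one}, since $\frac{1}{N}\sum_i\frac{N}{T}(I_i+1)=\frac{T-1+N}{T}$. You bound $d(\lambda_{T-1})$ by $d^*$ in the extra contribution $\frac{N-1}{T}\bigl(d(\lambda_{T-1})-\lambda_{T-1}^\top g_{T-1}\bigr)$. What you actually obtain overall is $\bigl(1+\frac{N-1}{T}\bigr)d^*$, and the surplus $\frac{N-1}{T}d^*$ is silently dropped. It is not nonpositive in general, since $d^*$ can be as large as $H$. The stated objective bound has no $NH/T$ or $Nd^*/T$ term; its only $N/T$ term is $\frac{N\Gtilde(2\norm{\lambda^*}+\alphabar)}{T}$. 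So your route gives a bound of the right order, but not the proposition as stated.

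\textbf{The fix is to center at the true mean.} Split $\frac{1}{N(I_i+1)}=\bigl(\frac{1}{N(I_i+1)}-\frac{1}{T-1+N}\bigr)+\frac{1}{T-1+N}$, as the paper does. Then:
\begin{itemize}
\item The bias vanishes. Jensen with $\mathrm{Var}(I_i)=(T-1)\frac{1}{N}(1-\frac{1}{N})$ gives $\Eone\abs{\frac{1}{N}-\frac{I_i+1}{T-1+N}}\le\frac{1}{N}\sqrt{\frac{N-1}{T}}$ (Lemma~\ref{lem:concentration-bound-stochastic-dual-subgradient}). This accounts exactly for the $\Gtilde\sqrt{(N-1)/T}$ and $H\sqrt{(N-1)/T}$ terms.
\item The idealized weights now sum to one, so $\frac{1}{T-1+N}\Eone\bigl[Nd(\lambda_{T-1})+\sum_{t\le T-2}d(\lambda_t)\bigr]\le d^*$ with nothing left over.
\item The telescoping of $-\lambda_t^\top g_t$ over $t\le T-2$ costs at most $\frac{\alpha\Gtilde^2}{2}=\frac{\alphabar\Gtilde}{2\sqrt T}$.
\item The last step contributes $\frac{N}{T-1+N}\Gtilde\,\Eone\norm{\lambda_{T-1}}\le\frac{N\Gtilde(2\norm{\lambda^*}+\alphabar)}{T}$.
\item For infeasibility, the idealized term $\frac{1}{T-1+N}\bigl(\sum_{t\le T-2}g_t+Ng_{T-1}\bigr)$ has positive part at most $\frac{N\Gtilde}{T}+\frac{\norm{\lambda_{T-1}}}{\alpha T}$. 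Lemma~\ref{lem:bound-growth-dual-iterates-stochastic-dual-subgradient} turns the second piece into $\frac{2\Gtilde\norm{\lambda^*}}{\alphabar\sqrt T}+\frac{\Gtilde}{\sqrt T}$.
\end{itemize}
Your other ingredients carry over unchanged: pathwise telescoping, $\Eone^t[h_{i_t}(x_{i_t}^*(\lambda_t))+\lambda_t^\top g_t]=d(\lambda_t)$, and bounded iterates via Jensen.
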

The above proposition is based on the following two lemmas. The first one is simply the analogue of Proposition~\ref{prop:dual-subgradient-bounded-iterates} to the stochastic setting, so we defer its proof to Appendix~\ref{sec:technical-results}. The second one bounds the deviation between the index counter $I_i+1$ and its expected value.

\begin{restatable}{lem}{SDSGDBoundIterates}
\label{lem:bound-growth-dual-iterates-stochastic-dual-subgradient}
    Consider the setting of Proposition~\ref{prop:primal-convergence-stochastic-dual-subgradient}. Then for all $t=0, \dots, T-1$,
    \begin{align*}
        \Eone \norm{\lambda_t - \lambda^*}^2 \leq \norm{\lambda_0 - \lambda^*} + t \Gtilde^2 \alpha^2.
    \end{align*}
\end{restatable}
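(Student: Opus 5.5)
Note first that the statement as written has $\norm{\lambda_0-\lambda^*}$ without a square on the right-hand side. The natural bound, and what the argument below gives, is $\Eone\norm{\lambda_t-\lambda^*}^2 \le \norm{\lambda_0-\lambda^*}^2 + t\Gtilde^2\alpha^2$. I will prove this version, which is presumably the intended one.

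The plan mirrors the proof of Proposition~\ref{prop:dual-subgradient-bounded-iterates}, with a conditional expectation inserted at each step. Fix $t\in\{0,\dots,T-2\}$, so that the update uses the stochastic subgradient $g_t = A_{i_t}x_{i_t}^*(\lambda_t)-b$.

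\textbf{One-step bound.} Contractivity of $\ProjPos{\cdot}$, together with $\lambda^*=\ProjPos{\lambda^*}$, gives
\begin{align*}
\norm{\lambda_{t+1}-\lambda^*}^2 \le \norm{\lambda_t-\lambda^*}^2 + 2\alpha\iprod{g_t}{\lambda_t-\lambda^*} + \alpha^2\norm{g_t}^2 .
\end{align*}
Next I bound $\norm{g_t}\le \Gtilde$, using the definition of $\Gtilde$. I then take the conditional expectation $\Eone^t$ given the randomness up to iteration $t$. As computed in the proof of Proposition~\ref{prop:dual-convergence-stochastic-dual-subgradient}, $\Eone^t[g_t]\in\partial d(\lambda_t)$. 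Since $\lambda_t$ is determined by the past, concavity of $d$ yields
\begin{align*}
\Eone^t\iprod{g_t}{\lambda_t-\lambda^*} \le d(\lambda_t)-d(\lambda^*).
\end{align*}
The right-hand side is $\le 0$ because $\lambda^*$ maximizes $d$ over $\R^m_+$ and $\lambda_t\ge0$. Hence
\begin{align*}
\Eone^t\norm{\lambda_{t+1}-\lambda^*}^2\le\norm{\lambda_t-\lambda^*}^2+\alpha^2\Gtilde^2 .
\end{align*}

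\textbf{Unrolling.} Taking total expectation via the tower property gives $\Eone\norm{\lambda_{t+1}-\lambda^*}^2 \le \Eone\norm{\lambda_t-\lambda^*}^2 + \alpha^2\Gtilde^2$ for every $t\le T-2$. Induction from the deterministic $\lambda_0$ then gives the claim for all $t\le T-1$. The range stops at $T-1$, so the deterministic last step of Algorithm~\ref{alg:stochastic-dual-subgradient-separable-primal} is never needed. It would in any case satisfy the same bound with $G\le\Gtilde$.

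\textbf{Main subtlety.} The only delicate point is the conditioning: $\lambda_t$ must be measurable with respect to $i_0,\dots,i_{t-1}$, and $i_t$ must be independent of them. Both hold because indices are sampled afresh and uniformly at each iteration. No other obstacle arises.
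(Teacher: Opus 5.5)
Your proof is correct and follows the paper's own argument: expand the projected step, bound $\norm{g_t}\le\Gtilde$, use $\Eone^t[g_t]\in\partial d(\lambda_t)$ together with concavity and optimality of $\lambda^*$, then take total expectation and recurse. The paper also handles the deterministic step $t=T-1$, which, as you note, is not needed for the stated range.

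You are right that the bound should read $\norm{\lambda_0-\lambda^*}^2$. This is what the recursion yields, and it is the form the paper actually uses later, via $\sqrt{\norm{\lambda_0-\lambda^*}^2+(T-1)\Gtilde^2\alpha^2}$. The unsquared version already fails at $t=0$ whenever $\norm{\lambda^*}>1$. Your use of contractivity of $\ProjPos{\cdot}$ is also more careful than the appendix proof, which writes the first step as an equality with the projection dropped.
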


\begin{lem}
\label{lem:concentration-bound-stochastic-dual-subgradient}
    Consider the setting of Proposition~\ref{prop:primal-convergence-stochastic-dual-subgradient}. Then for all $i=1, \dots, \n$,
    \begin{align*}
        \Eone \left[ \abs{\frac{1}{N} - \frac{ I_i+1}{T-1+\n}}\right] \leq \frac{1}{\n}\sqrt{\frac{N-1}{T}}.
    \end{align*}
\end{lem}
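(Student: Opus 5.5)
$I_i$ is a sum of $T-1$ i.i.d.\ Bernoulli variables with parameter $p = 1/\n$, so $I_i \sim \mathrm{Bin}(T-1, 1/\n)$. The plan is to reduce the expected absolute deviation to a variance computation via Jensen's inequality (or Cauchy--Schwarz). We have $\Eone[I_i] = (T-1)/\n$ and $\mathrm{Var}(I_i) = (T-1)\frac{1}{\n}\left(1-\frac{1}{\n}\right) = \frac{(T-1)(\n-1)}{\n^2}$.

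First, rewrite the quantity inside the absolute value with a common denominator:
\begin{align*}
\frac{1}{\n} - \frac{I_i+1}{T-1+\n} = \frac{T-1+\n - \n(I_i+1)}{\n(T-1+\n)} = \frac{(T-1) - \n I_i}{\n(T-1+\n)} = \frac{\Eone[I_i] - I_i}{T-1+\n}.
\end{align*}
The shift by $+1$ in the numerator and $+\n$ in the denominator is exactly what makes the expectation of $(I_i+1)/(T-1+\n)$ equal to $1/\n$. The deviation is therefore a centered variable divided by a deterministic constant.

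Next, apply Jensen's inequality, $\Eone|Z| \le \sqrt{\Eone Z^2}$:
\begin{align*}
\Eone\left[\abs{\frac{1}{\n} - \frac{I_i+1}{T-1+\n}}\right] \le \frac{\sqrt{\mathrm{Var}(I_i)}}{T-1+\n} = \frac{1}{\n}\cdot\frac{\sqrt{(T-1)(\n-1)}}{T-1+\n}.
\end{align*}
To conclude, it suffices to show $\frac{\sqrt{T-1}}{T-1+\n} \le \frac{1}{\sqrt{T}}$, i.e.\ $\sqrt{T(T-1)} \le T-1+\n$. This holds because $\sqrt{T(T-1)} \le T \le T-1+\n$ whenever $\n \ge 1$.

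There is no real obstacle here. The only point requiring care is the algebraic identification in the first step, together with the fact that the $T-1$ sampling steps are independent and uniform. That independence holds by construction: the indices $i_t$ are sampled independently of the iterates, and the last step of Algorithm~\ref{alg:stochastic-dual-subgradient-separable-primal} is deterministic and does not update $I_i$.
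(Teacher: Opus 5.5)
Your proof is correct and follows essentially the same route as the paper: Jensen's inequality reduces the expected absolute deviation to the binomial variance $\frac{(T-1)(\n-1)}{\n^2}$, and then $\frac{T-1}{(T-1+\n)^2} \le \frac{1}{T}$ finishes the argument. The only cosmetic difference is that you make the centering explicit with a common denominator, whereas the paper writes $\Eone[Z^2] = \mathrm{Var}(Z) + (\Eone Z)^2$ and observes that the mean term vanishes.
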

\begin{proof}
    First note that $\Eone[I_i + 1] = \Eone [I_i] + 1 = \frac{T-1}{\n} + 1 = \frac{T-1+\n}{\n}$. Then we have, by Jensen's inequality,
    \begin{align*}
        \Eone \left[\abs{1 - \frac{\n (I_i+1)}{T-1+\n}}\right]^2 &\leq \Eone \left[\left(1 - \frac{N(I_i+1)}{T-1+\n}\right)^2\right]  \\
        &= \text{Var}\left(1 - \frac{\n (I_i+1)}{T-1+\n}\right) + \Eone\left[1 - \frac{\n (I_i+1)}{T-1+\n}\right]^2.
    \end{align*}
    Since $\Eone [I_i+1] = \frac{T-1+\n}{\n}$, the second term is actually $0$. For the first term,
    \begin{align*}
        \text{Var}\left(1 - \frac{\n (I_i+1)}{T-1+\n}\right) &= \frac{N^2}{(T-1+\n)^2} \text{Var}(I_i) = \frac{N^2}{(T-1+\n)^2} (T-1) \frac{1}{N}(1 - \frac{1}{N}) \leq  \frac{N-1}{T}.
    \end{align*}
\end{proof}

We are now ready to start the proof of Proposition~\ref{prop:primal-convergence-stochastic-dual-subgradient}.

\begin{proof}
    Let us start by giving a sketch of the main idea of the proof. Recall that in the argument that made the deterministic proof work was the telescoping sums in equations~\eqref{eq:proof-dual-subgradient-telescoping-1} and~\eqref{eq:proof-dual-subgradient-telescoping-sum-2}. In the stochastic setting, we can only get similar inequalities if the inverse of $I_i+1$ is precisely equal to the inverse of its expected value, i.e., $\frac{1}{I_i + 1} = \frac{\n}{T - 1 + \n}$. The idea of the proof is therefore to separate both the infeasibility and suboptimality quantities in two terms. One term will depend on the `ideal' value $\frac{\n}{T - 1 + \n}$, and will lead to a telescoping sum similar to the deterministic setting. The other term will depend on the error between  $\frac{1}{I_i + 1}$ and $\frac{\n}{T - 1 + \n}$, which we will be able to bound using Lemma~\ref{lem:concentration-bound-stochastic-dual-subgradient}. We start with the infeasibility inequality. We have
\begin{align*}
    \frac{1}{\n}\sum_{i=1}^\n A_i \bar{x}_{i, T} - b =&\ \sum_{i=1}^\n \left(\frac{1}{\n (I_i + 1)} \left( (A_ix_i^*(\lambda_{T-1}) - b) +  \sum_{t\in \{0, \dots, T-2\}, \atop i_t = i} (A_i x_i^*(\lambda_t) - b) \right) \right)\\
    =& \ \underbrace{\sum_{i=1}^\n \left(\frac{1}{\n (I_i+1)} - \frac{1}{T-1+\n} \right) \left( (A_ix_i^*(\lambda_{T-1}) - b) +  \sum_{t\in \{0, \dots, T-2\}, \atop i_t = i} (A_i x_i^*(\lambda_t) - b) \right)}_{\Gamma_1} \\ &+ \underbrace{\frac{1}{T-1+\n} \sum_{i=1}^\n  \left( (A_ix_i^*(\lambda_{T-1}) - b) +  \sum_{t\in \{0, \dots, T-2\}, \atop i_t = i} (A_i x_i^*(\lambda_t) - b) \right)}_{\Gamma_2}
\end{align*}
For the first term, we have
\begin{align*}
    \Eone \norm{\Gamma_1} &\leq \Eone \left[\sum_{i=1}^\n \abs{\frac{1}{\n (I_i+1)} - \frac{1}{T-1+\n}} \left( \Gtilde +  \sum_{t\in \{0, \dots, T-2\}, \atop i_t = i}\Gtilde \right)\right]\\
    &= \Eone \left[\tilde{G} \sum_{i=1}^\n \abs{\frac{1}{\n} - \frac{I_i + 1}{T-1+\n}}\right] \leq \Gtilde \sqrt{\frac{\n-1}{T}} \tag{Lemma~\ref{lem:concentration-bound-stochastic-dual-subgradient}}.
\end{align*}
Let's look at the second term now. As in the proof of Proposition~\ref{prop:dual-subgradient-primal-convergence}, we let $\tilde{\lambda}_{t+1} = \lambda_t + \alpha g_t$ so that $\lambda_{t+1} = \ProjPos{\tilde{\lambda}_{t+1}}$. We have
\begin{align*}
    \norm{\Gamma_2}_{+} &\leq \frac{1}{T-1+\n} \left(\norm{\sum_{i=1}^\n (A_i x_i^*(\lambda_{T-1}) - b)}_+ + \norm{\sum_{t\in \{0, \dots, T-2\}, \atop i_t = i} \underbrace{(A_i x_i^*(\lambda_t) - b)}_{= \frac{\tilde{\lambda}_{t+1} - \lambda_t}{\alpha}}}_+\right) \\
    &\leq \frac{1}{ (T- 1+ \n)} \left(\n \Gtilde + \frac{1}{\alpha} \norm{\sum_{t=0}^{T-2} (\tilde{\lambda}_{t+1} - \lambda_t)}_+ \right).
\end{align*}
Now, observe that $\tilde{\lambda}_{t+1} \leq \lambda_{t+1}$, where the inequality is to be understood component-wise, and thus
\begin{align*}
    \sum_{t=0}^{T-2} (\tilde{\lambda}_{t+1} - \lambda_t) \leq \sum_{t=0}^{T-2} (\lambda_{t+1} - \lambda_t) = \lambda_{T-1} - \lambda_0 \leq \lambda_{T-1},
\end{align*}
since $\lambda_0 \geq 0$. Since $\lambda_{T-1} \geq 0$, we then get
\begin{align*}
    \norm{\sum_{t=0}^{T-2} (\tilde{\lambda}_{t+1} - \lambda_t)}_+ \leq \norm{\lambda_{T-1}}.
\end{align*}
Therefore,
\begin{align*}
    \Eone \norm{\Gamma_2}_+ &\leq 
    \frac{\n \Gtilde}{T-1+\n} + \frac{1}{\alpha(T-1+\n)} \Eone[\norm{\lambda_{T-1}}]\\
    &\leq \frac{\n \Gtilde}{T-1+\n} + \frac{1}{\alpha(T-1+\n)} \Eone[\norm{\lambda_{T-1} - \lambda^*} + \norm{\lambda^*}] \\
    &\leq \frac{N \Gtilde}{T-1 + \n} + \frac{1}{\alpha(T-1+\n)} \left(\norm{\lambda^*} + \sqrt{\Eone \norm{\lambda_{T-1} - \lambda^*}^2}\right).
    \end{align*}
Applying Lemma~\ref{lem:bound-growth-dual-iterates-stochastic-dual-subgradient}, we get
    \begin{align*}
    \Eone \norm{\Gamma_2}_+ &\leq \frac{N \Gtilde}{T-1 + \n} + \frac{1}{\alpha (T - 1 + \n)} \left( \norm{\lambda^*} + \sqrt{\norm{\lambda_0 - \lambda^*}^2 + (T-1) \Gtilde^2 \alpha^2}\right)  \\
    &\leq \frac{N \Gtilde}{T} + \frac{ \norm{\lambda_0 - \lambda^*} + \norm{\lambda^*}}{\alpha T} +\frac{\Gtilde}{\sqrt{T}},
\end{align*}
which proves the infeasibility inequality by plugging $\alpha = \alphabar/(\Gtilde\sqrt{T})$ and $\lambda_0 = 0_m$.

Now, for primal suboptimality, we have,
\begin{align}
\label{eq:proof-stochastic-dual-subgradient-primal-convergence-important-result}
\begin{split}
    \frac{1}{\n} \sum_{i=1}^\n h_i(\bar{x}_{i, T}) =& \ \frac{1}{\n} \sum_{i=1}^\n h_i\left( \frac{1}{I_i+1} \left( x_i^*(\lambda_{T-1}) +  \sum_{t\in \{0, \dots, T-2\}, \atop i_t = i} x_i^*(\lambda_t)\right)\right) \\
    \leq &\ \frac{1}{\n} \sum_{i=1}^\n \frac{1}{I_i+1}\left( h_i(x_i^*(\lambda_{T-1})) +  \sum_{t\in \{0, \dots, T-2\}, \atop i_t = i} h_i(x_i^*(\lambda_t))\right).
    \end{split}
\end{align}
Just like before, we now split the sum as
\begin{align}
\label{eq:SDSGD-opt-gap-bound1}
\begin{split}
    \frac{1}{\n} \sum_{i=1}^\n h_i(\bar{x}_{i, T}) \leq &\  \underbrace{\sum_{i=1}^\n \left(\frac{1}{\n (I_i+1)}- \frac{1}{T- 1 + \n} \right)\left( h_i(x_i^*(\lambda_{T-1})) +  \sum_{t\in \{0, \dots, T-2\}, \atop i_t = i} h_i(x_i^*(\lambda_t))\right)}_{=\Phi_1} \\
    & \ + \frac{1}{T- 1 + \n} \underbrace{\sum_{i=1}^\n  \left( h_i(x_i^*(\lambda_{T-1})) +  \sum_{t\in \{0, \dots, T-2\}, \atop i_t = i} h_i(x_i^*(\lambda_t))\right)}_{=\Phi_2}.
    \end{split}
\end{align}
For the first term, we have
\begin{align*}
    \Eone[\Phi_1] &= \Eone \left[\sum_{i=1}^\n \left(\frac{1}{\n (I_i+1)}- \frac{1}{T- 1 + \n} \right)\left( h_i(x_i^*(\lambda_{T-1})) +  \sum_{t\in \{0, \dots, T-2\}, \atop i_t = i} h_i(x_i^*(\lambda_t))\right)\right] \\ &\leq \Eone \left[\sum_{i=1}^\n \abs{\frac{1}{\n (I_i+1)}- \frac{1}{T- 1 + \n}}\left( H +  \sum_{t\in \{0, \dots, T-2\}, \atop i_t = i} H\right)\right]\\
    &= H\Eone \left[\sum_{i=1}^\n \abs{\frac{1}{\n }- \frac{I_i + 1}{T- 1 + \n}}\right] \\
    &\leq H \sqrt{\frac{\n -1}{T}}. \tag{Lem.~\ref{lem:concentration-bound-stochastic-dual-subgradient}} 
\end{align*}

For the second term, we have
\begin{align*}
\Phi_2 =& \ \sum_{i=1}^\n \left( h_i(x_i^*(\lambda_{T-1})) +  \sum_{t\in \{0, \dots, T-2\}, \atop i_t = i} h_i(x_i^*(\lambda_t))\right)\\ =& \ \sum_{i=1}^\n h_i(x_i^*(\lambda_{T-1})) +  \sum_{t=0}^{T-2} h_{i_t}(x_{i_t}^*(\lambda_t)) \\
=&\ \underbrace{\sum_{i=1}^\n \left(h_i(x_i^*(\lambda_{T-1})) + (\lambda_{T-1})^\top(A_ix_i^*(\lambda_{T-1}) - b) \right)}_{=\n d(\lambda_{T-1})} - \underbrace{\sum_{i=1}^\n \left( (\lambda_{T-1})^\top(A_ix_i^*(\lambda_{T-1}) - b) \right)}_{=\n (\lambda_{T-1})^\top g_{T-1}} \\ & + \sum_{t=0}^{T-2} h_{i_t}(x_{i_t}^*(\lambda_t)) + \lambda_t^\top (A_{i_t}x_{i_t}^*(\lambda_t) - b) - \sum_{t=0}^{T-2}  \lambda_t^\top (A_{i_t}x_{i_t}^*(\lambda_t) - b).
\end{align*}
Taking expectation we get
\begin{align*}
    \Eone [\Phi_2] =&\ \Eone \left[\n d(\lambda_{T-1}) -\n (\lambda_{T-1})^\top g_{T-1} \right]  \\
    & + \Eone \left[ \sum_{t=0}^{T-2} \underbrace{\frac{1}{\n}\sum_{i=1}^\n  h_{i}(x_{i}^*(\lambda_t)) + \lambda_t^\top (A_{i}x_{i}^*(\lambda_t) - b)}_{=d(\lambda_t)}\right] - \Eone \left[\sum_{t=0}^{T-2}  \lambda_t^\top (A_{i_t}x_{i_t}^*(\lambda_t) - b) \right] \\
    =& \ \Eone \left[\n d(\lambda_{T-1}) -\n (\lambda_{T-1})^\top g_{T-1} + \sum_{t=0}^{T-2} d(\lambda_t)\right] - \Eone \left[\sum_{t=0}^{T-2}  \lambda_t^\top (A_{i_t}x_{i_t}^*(\lambda_t) - b) \right].
\end{align*}
Now, observe that for $t=0,\dots, T-2$,
\begin{align*}
    \norm{\lambda_{t+1}}^2 &=  \norm{\ProjPos{\lambda_t + \alpha (A_{i_t}x_{i_t}^*(\lambda_t) - b )}}^2  \\
    &\leq \norm{\lambda_t + \alpha (A_{i_t}x_{i_t}^*(\lambda_t) - b )}^2  \\
    &= \norm{\lambda_t}^2 + 2\alpha \lambda_t^\top (A_{i_t}x_{i_t}^*(\lambda_t) - b) + \alpha^2\norm{A_{i_t}x_{i_t}^*(\lambda_t) - b }^2.
\end{align*}
Therefore,
\begin{align*}
    -\lambda_t^\top (A_{i_t}x_{i_t}^*(\lambda_t) - b) &\leq \frac{\norm{\lambda_t}^2 - \norm{\lambda_{t+1}}^2}{2\alpha} + \frac{\alpha \Gtilde^2}{2}.
\end{align*}
We thus get
\begin{align*}
    \Eone [\Phi_2] &\leq \Eone \left[\n d(\lambda_{T-1}) -\n (\lambda_{T-1})^\top g_{T-1} + \sum_{t=0}^{T-2} d(\lambda_t)\right] + \Eone \left[\sum_{t=0}^{T-2} \frac{\norm{\lambda_t}^2 - \norm{\lambda_{t+1}}^2}{2\alpha} + \frac{\alpha \Gtilde^2}{2} \right] \\
    &\leq \Eone \left[\n d(\lambda_{T-1}) -\n (\lambda_{T-1})^\top g_{T-1} + \sum_{t=0}^{T-2} d(\lambda_t)\right] + \frac{\norm{\lambda_0}^2}{2\alpha} + \frac{\alpha (T-1) \Gtilde^2}{2}.
\end{align*}
Now,
\begin{align*}
    \Eone [- (\lambda_{T-1})^\top g_{T-1}] &\leq \Eone [\norm{\lambda_{T-1}} \norm{g_{T-1}}] \\
    &\leq \Gtilde \Eone [\norm{\lambda_{T-1} - \lambda^*} + \norm{\lambda^*}] \\
    &\leq \Gtilde \norm{\lambda^*} + \Gtilde \sqrt{\Eone \norm{\lambda_{T-1} - \lambda^*}^2} \\
    &\leq \Gtilde \norm{\lambda^*} + \Gtilde \sqrt{\norm{\lambda_0 - \lambda^*}^2 + (T-1) \Gtilde^2 \alpha^2} \tag{Lem. \ref{lem:bound-growth-dual-iterates-stochastic-dual-subgradient}}\\
    &\leq \Gtilde(\norm{\lambda^*} + \norm{\lambda_0 - \lambda^*}) + \alpha \Gtilde^2 \sqrt{T-1}.
 \end{align*}

Therefore, 
\begin{align*}
    \Eone [\Phi_2] &\leq \Eone \left[\n d(\lambda_{T-1}) + \sum_{t=0}^{T-2} d(\lambda_t)\right] +\n\Gtilde\left(\norm{\lambda^*} + \norm{\lambda_0 - \lambda^*}\right) + \alpha \n \Gtilde^2 \sqrt{T-1} + \frac{\norm{\lambda_0}^2}{2\alpha} + \frac{\alpha (T-1) \Gtilde^2}{2}
\end{align*}
Coming back to~\eqref{eq:SDSGD-opt-gap-bound1}, recall that
\begin{align}
    \Eone \left[\frac{1}{\n} \sum_{i=1}^\n h_i(\bar{x}_{i, T}) \right] &\leq \Eone [\Phi_1] + \frac{1}{T-1 + \n} \Eone [\Phi_2]\nonumber \\
    &\leq H \sqrt{\frac{\n - 1}{T}} + \frac{1}{T-1 + \n} \Eone [\Phi_2] \label{eq:proof-stochastic-dual-subgradient-primal-convergence-1}.
\end{align}
Finally, 
\begin{align*}
    \frac{1}{T-1 + \n} \Eone [\Phi_2] \leq & \ \frac{1}{T-1 + \n}\Eone \left[\n d(\lambda_{T-1}) + \sum_{t=0}^{T-2} d(\lambda_t)\right] \\ &+ \frac{1}{T-1+\n}\left(\n\Gtilde(\norm{\lambda^*} + \norm{\lambda_0 - \lambda^*}) + \alpha \n \Gtilde^2 \sqrt{T-1} + \frac{\norm{\lambda_0}^2}{2\alpha} + \frac{\alpha (T-1) \Gtilde^2}{2} \right)\\
    \leq &\ \frac{1}{T-1 + \n}\Eone \left[\n d^* + \sum_{t=0}^{T-2} d^*\right]
    \\ &+    \frac{1}{T}\left(\n\Gtilde(\norm{\lambda^*} + \norm{\lambda_0 - \lambda^*}) + \alpha \n \Gtilde^2 \sqrt{T} + \frac{\norm{\lambda_0}^2}{2\alpha} + \frac{\alpha T \Gtilde^2}{2} \right) \\
    = & \  d^* + \frac{\n \Gtilde(\norm{\lambda^*} + \norm{\lambda_0 - \lambda^*})}{T} + \frac{\alpha\n \Gtilde^2}{\sqrt{T}} + \frac{\norm{\lambda_0}^2}{2\alpha T} + \frac{\alpha \Gtilde^2}{2}.
\end{align*}
Going back to~\eqref{eq:proof-stochastic-dual-subgradient-primal-convergence-1}, we get
\begin{align*}
    \Eone \left[\frac{1}{\n} \sum_{i=1}^\n h_i(\bar{x}_{i, T}) \right] \leq d^* + \frac{\n \Gtilde(\norm{\lambda^*} + \norm{\lambda_0 - \lambda^*})}{T} + \frac{\alpha \n \Gtilde^2}{\sqrt{T}} + \frac{\norm{\lambda_0}^2}{2\alpha T} + \frac{\alpha \Gtilde^2}{2} +  H \sqrt{\frac{\n - 1}{T}}.
\end{align*}

Plugging $\alpha = \frac{\alphabar}{\tilde{G}\sqrt{T}}$ and $\lambda_0 = 0_m$ yields the result.
\end{proof}

\subsubsection{Complexity analysis}

We can now derive the complexity in terms of number of oracle calls needed to reach $\epsilon$-precision.

\begin{prop}
\label{prop:epsilon-complexity-stochastic-dual-subgradient}
    Suppose Assumption~\ref{ass:existence-dual-maximizer} holds. Consider Algorithm~\ref{alg:stochastic-dual-subgradient-separable-primal} applied to problem~\eqref{eq:primal} with constant stepsize $\alpha_t = \alpha = \frac{\alphabar}{\Gtilde \sqrt{T}}$.
    
    \textbf{Dual complexity:} the number of calls to oracle~\eqref{eq:oracle-definition} to reach an $\epsilon$-accurate dual solution in expectation is bounded above by
    \begin{align}
        O\left(\n + \frac{1}{\epsilon^2}\right).
    \end{align}
    \textbf{Primal complexity:} the number of calls to oracle~\eqref{eq:oracle-definition} to reach an $\epsilon$-accurate primal solution in expectation is bounded above by
    \begin{align}
        O\left(\frac{N}{\epsilon^2} \right).
    \end{align}
\end{prop}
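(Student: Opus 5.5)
The plan is to read the complexities directly off Propositions~\ref{prop:dual-convergence-stochastic-dual-subgradient} and~\ref{prop:primal-convergence-stochastic-dual-subgradient}, then count oracle calls. First I account for the cost of one run of Algorithm~\ref{alg:stochastic-dual-subgradient-separable-primal} with $T$ iterations. Iterations $t=0,\dots,T-2$ each make one call to oracle~\eqref{eq:oracle-definition}, and the last deterministic step makes $\n$ calls. The total is therefore $T-1+\n = O(T+\n)$ calls.

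\textbf{Dual complexity.} Proposition~\ref{prop:dual-convergence-stochastic-dual-subgradient}, with $\lambda_0=0_m$ and $\alpha=\alphabar/(\Gtilde\sqrt{T})$, gives $\Eone[d^*-d(\bar{\lambda}_T)]\le C/\sqrt{T}$ with $C=\Gtilde(\norm{\lambda^*}^2/\alphabar+\alphabar)/2$, which is independent of $\n$ and $T$. Choosing $T=\lceil C^2/\epsilon^2\rceil = O(1/\epsilon^2)$ makes $\bar{\lambda}_T$ an $\epsilon$-accurate dual solution in expectation. Since $\bar{\lambda}_T\ge 0$ as an average of nonnegative iterates, it is feasible for~\eqref{eq:dual}. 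The number of calls is then $O(\n+1/\epsilon^2)$. The additive $\n$ comes only from the final deterministic step; this is the one place where the count differs from simply ``$T$''.

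\textbf{Primal complexity.} Proposition~\ref{prop:primal-convergence-stochastic-dual-subgradient} bounds both expected infeasibility and expected suboptimality by terms of the form
\begin{align*}
c_1\sqrt{\frac{\n-1}{T}} + \frac{c_2}{\sqrt{T}} + \frac{c_3\n}{T},
\end{align*}
where $c_1,c_2,c_3$ do not depend on $\n$ or $T$. The dominant term is $\sqrt{\n/T}$, and forcing it below $\epsilon$ requires $T\gtrsim \n/\epsilon^2$. With $T=\Theta(\n/\epsilon^2)$, the other two terms also become small: $c_2/\sqrt{T}=O(\epsilon/\sqrt{\n})\le O(\epsilon)$, and $c_3\n/T=O(\epsilon^2)\le O(\epsilon)$ for $\epsilon\le 1$. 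Rescaling the constant in $T$ therefore makes both bounds at most $\epsilon$. Feasibility of the candidates, $\bar{x}_{i,T}\in\domhi$, holds because each $\bar{x}_{i,T}$ is a convex combination of points of the convex set $\domhi$. The total cost is $T-1+\n = O(\n/\epsilon^2+\n)=O(\n/\epsilon^2)$.

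The main (mild) obstacle is the bookkeeping of the $\n/T$ and $\sqrt{\n/T}$ terms. One must check that with the stated choice of $T$ every term is $O(\epsilon)$ uniformly in $\n$, and that the $\n$ calls from the last step are absorbed: they are absorbed by $1/\epsilon^2$ only as an additive $\n$ in the dual case, and fully absorbed in the primal case. I would also remark that the $\sqrt{(\n-1)/T}$ term, which comes from Lemma~\ref{lem:concentration-bound-stochastic-dual-subgradient}, is what prevents improving on the deterministic rate.
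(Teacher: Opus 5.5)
Your proposal is correct and follows the paper's proof: you read off the $O(1/\sqrt{T})$ dual rate and the $O(\sqrt{\n/T} + 1/\sqrt{T} + \n/T)$ primal rates from Propositions~\ref{prop:dual-convergence-stochastic-dual-subgradient} and~\ref{prop:primal-convergence-stochastic-dual-subgradient}, take $T = O(1/\epsilon^2)$ for the dual and $T = O(\n/\epsilon^2)$ for the primal, and add the $\n$ calls of the final deterministic step. Your additional checks are consistent with the paper's argument: you verify feasibility of $\bar{\lambda}_T$ and $\bar{x}_{i,T}$, and you state the restriction $\epsilon \le 1$ explicitly, which the paper leaves implicit when it writes $O(\max(\n/\epsilon^2, \n/\epsilon)) = O(\n/\epsilon^2)$.
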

\begin{proof}
For the dual complexity, from Proposition~\ref{prop:dual-convergence-stochastic-dual-subgradient} we have
\begin{align*}
    \Eone [d(\lambda^*) - d(\lambda_t)] \leq O\left(\frac{1}{\sqrt{T}}\right),
\end{align*}
so we need $T = O\left(1/\epsilon^2\right)$ iterations to achieve an $\epsilon$-accurate dual solution in expectation. Since each iteration requires only one call to oracle~\eqref{eq:oracle-definition}, except for the last one which requires $\n$ calls, we get the result.

    For the primal complexity, from Proposition~\ref{prop:primal-convergence-stochastic-dual-subgradient} we have
    \begin{align*}
   &\Eone  \left[\norm{\frac{1}{\n} \sum_{i=1}^\n A_i \bar{x}_{i, T} - b }_+\right] \leq O\left( \sqrt{\frac{N}{T}} + \frac{\n}{T}\right), \\
        &\Eone \left[\frac{1}{\n} \sum_{i=1}^\n h_i(\bar{x}_{i, T}) -d^* \right] \leq  O\left( \frac{1}{\sqrt{T}} + \sqrt{\frac{N}{T}} + \frac{\n }{T}\right).
    \end{align*}
    In order to reach $\epsilon$-precision, we thus need to set 
    \begin{align*}
        T = O \left( \max \left( \frac{\n }{\epsilon^2}, \frac{\n}{\epsilon}\right)\right) = O\left(\frac{N}{\epsilon^2} \right).
    \end{align*}
    Since each iteration of Algorithm~\ref{alg:stochastic-dual-subgradient-separable-primal} requires only one call to oracle~\eqref{eq:oracle-definition}, except for the last one which requires $\n$ calls, we get a complexity $O\left(\n + \frac{N}{\epsilon^2} \right) = O\left(\frac{N}{\epsilon^2} \right)$ in terms of number of calls to~\eqref{eq:oracle-definition}.
\end{proof}

Comparing with the complexity of the deterministic dual subgradient algorithm (Algorithm~\ref{alg:dual-subgradient-separable-primal}) derived in Proposition~\ref{prop:epsilon-complexity-dual-subgradient}, there is no theoretical advantage in going with the stochastic version of the algorithm to obtain good primal solutions. We see however that the complexity to solve the dual is improved to $O(1/\epsilon^2)$ instead of $O(\n/\epsilon^2)$ for the deterministic algorithm.

This is one crucial aspect of our two-stage approach: we are able to `quickly' get an approximation of the dual optimal value. Once we have this value, we are ready to start the second stage of our method, which we present in Section~\ref{sec:BCFW}.

\begin{remark}
The additional $\n$ term in the dual complexity $O(\n + 1/\epsilon^2)$ is due to the last iteration of Algorithm~\ref{alg:stochastic-dual-subgradient-separable-primal}, where we take a deterministic subgradient step. Recall that we only do so in order to guarantee well-definedness of the primal candidate solution. If we took a stochastic step instead, the proof for dual convergence would still hold and we would get a complexity $O(1/\epsilon^2)$.
\end{remark}

\subsection{Stage 2: block-coordinate Frank-Wolfe}
\label{sec:BCFW}

After running the stochastic dual subgradient Algorithm~\ref{alg:stochastic-dual-subgradient-separable-primal}, we obtain a dual candidate $\bar{\lambda}_T$ and, more importantly in our case, an approximation of the optimal dual value $d(\bar{\lambda}_T)$.

Let us start with a simple observation, which follows the approach from~\cite{dubois2025frank}. Since $d^*$ is the optimal dual value and because strong duality holds by Assumption~\ref{ass:existence-dual-maximizer}, we have
\begin{align*}
    \begin{pmatrix}
        d^* \\ b
    \end{pmatrix} \in \frac{1}{\n} \sum_{i=1}^\n \left\{ \begin{pmatrix} h_i(x_i) \\ A_i x_i \end{pmatrix} \mid x_i \in \domhi \right\} + \begin{pmatrix}
        0 \\ \R^m_+
    \end{pmatrix}.
\end{align*}
Notably, the elements $x_i$ achieving the above inclusion are optimal for the primal problem. Let us then define the corresponding sets
\begin{align}
\label{eq:definition-Ci-sets}
    C_i := \left\{ \begin{pmatrix} h_i(x_i) \\ A_i x_i \end{pmatrix} \mid x_i \in \domhi \right\}, \ i=1, \dots, \n.
\end{align}
In~\cite{dubois2025frank}, the authors suggest minimizing the function $\norm{\begin{pmatrix}
    \beta \\ z
\end{pmatrix} - \begin{pmatrix}
        d^* \\ b
    \end{pmatrix}}^2_+$ over $(\beta, z) \in \frac{1}{\n} \sum_{i=1}^\n C_i$ using a Frank-Wolfe algorithm, obtaining primal points as a by-product of the linear minimization step that must be computed at each iteration. The important thing here is that, as we will see later, computing a linear minimization over $C_i$ is equivalent to making a call to oracle~\eqref{eq:oracle-definition}.
    We follow the same approach, with two important modifications:
    \begin{enumerate}
    \item As we do not have access to the optimal value $d^*$, we replace it with the output $d(\bar{\lambda}_T)$ of Algorithm~\ref{alg:stochastic-dual-subgradient-separable-primal}.
    \item Each iteration of the Frank-Wolfe algorithm from~\cite{dubois2025frank} requires $\n$ calls to the oracle~\eqref{eq:oracle-definition}, i.e., one for each of the $\n$ agents. This would defeat the purpose of this work which is to reduce the dependency on $\n$. Instead we propose to replace the classical Frank-Wolfe algorithm with its block-coordinate version from~\cite{lacoste2013block}, which only makes one call to oracle~\eqref{eq:oracle-definition} per iteration.
    \end{enumerate}

    Let us define the following function
    \begin{align}
    \label{eq:F-definition}
        F : \R^{m+1} &\rightarrow \R, \\
        \begin{pmatrix}\beta\\ z\end{pmatrix} &\rightarrow \frac{1}{2} \max(\beta - d(\bar{\lambda}_T), 0)^2 + \frac{1}{2} \norm{z - b}_+^2,
    \end{align}
    with $\beta \in \R$ and $z \in \R^m$. We then consider the following optimization problem
    \begin{align}
    \begin{split}
        \min_{\beta, z} &\ F\left((\beta, z)\right)\\
        \text{subject to } &\ \begin{pmatrix}
            \beta \\ z
        \end{pmatrix} \in \frac{1}{\n}\sum_{i=1}^\n C_i.
        \end{split}
        \label{eq:second-stage-problem-definition}
    \end{align}
    We write $F^*$ as the optimal value of~\eqref{eq:second-stage-problem-definition}, and bound it as follows.
    \begin{lem}
    \label{lem:second-stage-optimal-value-bound}
        It holds that
        \begin{align*}
            F^* \leq \frac{1}{2} (d^* - d(\bar{\lambda}_T))^2.
        \end{align*}
    \end{lem}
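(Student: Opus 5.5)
The plan is to exhibit a single feasible point of problem~\eqref{eq:second-stage-problem-definition} at which $F$ is at most $\frac{1}{2}(d^* - d(\bar{\lambda}_T))^2$. Since $F^*$ is an infimum over the feasible set, this suffices. The natural candidate is built from a primal optimal solution of~\eqref{eq:primal}.

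First, I will secure such a solution. By Assumption~\ref{ass:existence-dual-maximizer}, strong duality holds, so $d^*$ is the optimal value of~\eqref{eq:primal}. Attainment should follow from compactness of each $\domhi$ and lower semicontinuity of the $h_i$. The objective $\frac{1}{\n}\sum_i h_i$ is then lsc on the compact set $\prod_i \domhi$. The feasible set $\{x : \frac{1}{\n}\sum_i A_i x_i \leq b\}$ is closed, and it is nonempty because the primal value $d^*$ is finite. Hence there is $x^\star = (x_1^\star, \dots, x_\n^\star)$ with $x_i^\star \in \domhi$, $\frac{1}{\n}\sum_i h_i(x_i^\star) = d^*$ and $\frac{1}{\n}\sum_i A_i x_i^\star \leq b$. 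This is exactly the inclusion displayed before~\eqref{eq:definition-Ci-sets}.

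Next, I will set $\beta = \frac{1}{\n}\sum_i h_i(x_i^\star) = d^*$ and $z = \frac{1}{\n}\sum_i A_i x_i^\star$. By the definition~\eqref{eq:definition-Ci-sets} of the $C_i$, the point $(\beta, z)$ lies in $\frac{1}{\n}\sum_i C_i$, so it is feasible for~\eqref{eq:second-stage-problem-definition}.

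Finally, I will evaluate $F$ at this point. Since $z \leq b$ componentwise, $\norm{z - b}_+ = 0$. The first term equals $\frac{1}{2}\max(d^* - d(\bar{\lambda}_T), 0)^2$. Because $\bar{\lambda}_T \geq 0$ is dual feasible, $d(\bar{\lambda}_T) \leq d^*$, so the max equals $d^* - d(\bar{\lambda}_T)$. Actually the bound $\max(a,0)^2 \leq a^2$ already gives the claim without this observation. Therefore $F^* \leq F((\beta, z)) = \frac{1}{2}(d^* - d(\bar{\lambda}_T))^2$.

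The only point requiring care is the existence of a primal minimizer, that is, attainment of the primal value. I expect this to be the main obstacle. It rests on compactness of the domains together with lsc of the $h_i$, and on nonemptiness of the feasible set implied by finiteness of $d^*$ under strong duality. If attainment were an issue, I would instead take an $\eta$-optimal feasible $x$, obtain $F^* \leq \frac{1}{2}\max(d^* + \eta - d(\bar{\lambda}_T), 0)^2$, and let $\eta \to 0$.
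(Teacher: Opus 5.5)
Your proof is correct and follows the paper's argument: both evaluate $F$ at the feasible point $\frac{1}{\n}\sum_i (h_i(x_i^\star), A_i x_i^\star)$ built from a primal optimal solution, where the infeasibility term vanishes and the first term is at most $\frac{1}{2}(d^* - d(\bar{\lambda}_T))^2$. Your justification of primal attainment, via compactness of the $\domhi$, lower semicontinuity of the $h_i$, and finiteness of $d^*$, fills in a step the paper takes for granted when it asserts $(d^*, b) \in \frac{1}{\n}\sum_i C_i + (0, \R^m_+)$.
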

    \begin{proof}
        This is clear from the already mentioned fact that
        \begin{align*}
            \begin{pmatrix}
                d^* \\ b
            \end{pmatrix} \in \frac{1}{\n}\sum_{i=1}^\n C_i + \begin{pmatrix}
                0 \\ \R^m_+
            \end{pmatrix},
        \end{align*}
        implying that there exists $(\beta^*, z^*) \in \frac{1}{\n}\sum_{i=1}^\n C_i$ such that $d^* = \beta^*$ and $z^* \leq b$. In particular,
        \begin{align*}
            F^* \leq F((\beta^*, z^*)) = \frac{1}{2} \max(\beta^* - d(\bar{\lambda}_T), 0)^2 + \frac{1}{2} \norm{z^* - b}^2_+ = \frac{1}{2}(d^* - d(\bar{\lambda}_T))^2.
        \end{align*}
    \end{proof}

We use the block-coordinate Frank-Wolfe (BCFW) algorithm from~\cite{lacoste2013block} to solve~\eqref{eq:second-stage-problem-definition}. For all $i=1,\dots, \n$, the algorithm starts from some $x_i^0 \in \domhi$ and forms $\beta^0 = \sum_{i=1}^\n \beta^0_i$ and $z^0 = \sum_{i=1}^\n z^0_i$ where
\begin{align*}
    \beta^0_i = \frac{1}{\n} h_i(x_i^0) \ \text{ and }\ 
    z^0_i = \frac{1}{\n} A_i x_i^0.
\end{align*}
Observe then that $(\beta^0, z^0) \in \sum_{i=1}^\n C_i$. At each iteration $k$, the gradient $(\gamma^k, g^k) \in \R_+ \times \R^m_+$ of $F$ is computed as
\begin{align*}
    \gamma^k &= \max(\beta^k - d(\bar{\lambda}_T), 0) \ \text{ and }\ 
    g^k = \max(z^k - b, 0_m),
\end{align*}
where the maximum is taken component wise. In the classical Frank-Wolfe algorithm, this gradient is used to minimize the first-order approximation of $F$ around $(\beta^k, z^k)$, i.e., it computes
\begin{align*}
    (\eta^k, s^k) \in \argmin_{(\eta, s) \in \sum_{i=1}^\n C_i} \eta \gamma^k + s^\top g^k.
\end{align*}
This comes down to solving, for each $i=1, \dots, \n$,
\begin{align}
\label{eq:FW-second-stage-oracle}
    x_i^k = x_i^*((\gamma^k, g^k)) \in \argmin_{x_i \in \domhi} \left\{ \gamma^k h_i(x_i) + (g^k)^\top A_i x_i\right\},
\end{align}
and setting
\begin{align*}
    \eta^k = \frac{1}{\n} \sum_{i=1}^n h_i(x_i^k) \  \text{ and } \ s^k = \frac{1}{\n} \sum_{i=1}^\n A_i x_i^k,
\end{align*}
and then updating $(\beta^k, z^k)$ as a convex combination of itself and $(\eta^k, s^k$).
Importantly here, we see that once again the main computational bottleneck is oracle~\eqref{eq:oracle-definition}. We also see that, as in the deterministic dual subgradient, each iteration of the Frank-Wolfe algorithm requires $\n$ calls to oracle~\eqref{eq:oracle-definition}. We thus resort instead to the block-coordinate version, where at each iteration the algorithm picks an index $i_k \in \{1, \dots, \n\}$ uniformly at random, computes~\eqref{eq:FW-second-stage-oracle} only for that index, and updates only that index in the next iterate. We summarize the full method in Algorithm~\ref{alg:BCFW}. Note that the sets $C_i$ are not necessarily convex, and thus it might be that the iterates of Algorithm~\ref{alg:BCFW} are not in the feasible set. We shall see that this however not an issue, as we are only truly interested in the final output $\hat{x}^K = (\hat{x}_1^K, \dots, x_\n^K)$, and we know that $\hat{x}_i^K \in \domhi$ for all $i$ by convexity of $\domhi$.

\begin{algorithm}
\caption{Block coordinate Frank-Wolfe}
\label{alg:BCFW}
\begin{algorithmic}[1]
\STATE{\textbf{Input}: $d(\bar{\lambda}_T)$ and $x_i^0 \in \domhi$ for all $i=1, \dots, \n$, number of iterations $K$.} 
\STATE{Compute $\beta_i^0 = \frac{1}{\n} h_i(x_i^0)$ and $z_i^0 = \frac{1}{\n} A_i x_i^0$ for all $i=1, \dots, \n$.}
\STATE{Set $\beta^0 = \sum_{i=1}^\n \beta_i^0$ and $z^0 = \sum_{i=1}^\n z^0_i$.}
\STATE{Set current estimate $\hat{x}_i^0 = x_i^0$ for all $i=1,\dots, \n$.}
\FOR{$k=0, \dots, K-1$}
\STATE{Compute the gradient $(\gamma^k, g^k)$ of $F$:
\begin{align*}
    &\gamma^k = \max(\beta^k - d(\bar{\lambda}_T), 0) \\
    &g^k = \max(z^k - b, 0_m)
\end{align*}
}
\STATE{Sample $i_k \in \{1, \dots, \n\}$ uniformly at random.}
\STATE{Compute
\begin{align}
\label{eq:bcfw-oracle}
    x_{i_k}^{k+1} = 
        \argmin_{x_{i_k} \in X_{i_k}} \{ \gamma^k h_{i_k}(x_{i_k}) + (g^k)^\top A_{i_k}x_{i_k} \}
\end{align}
}
\STATE{Pick some stepsize $\rho_k \in [0, 1]$ and set
\begin{align}
\label{eq:BCFW-update-beta-z}
    \begin{pmatrix}\beta^{k+1}_i \\ z_i^{k+1}\end{pmatrix} = \begin{cases} (1- \rho_k) \begin{pmatrix}\beta^k_i \\ z^k_i \end{pmatrix} + \rho_k \frac{1}{\n} \begin{pmatrix}h_i(x_{i}^{k+1}) \\A_i x_i^{k+1} \end{pmatrix} &\text{ if } i = i_k, \\
        \begin{pmatrix}\beta_i^k \\ z_i^k \end{pmatrix} &\text{ otherwise.}
    \end{cases}
\end{align}
and
\begin{align}
\label{eq:BCFW-update-x-hat}
    \hat{x}_{i}^{k+1} = \begin{cases}(1- \rho_k) \hat{x}_{i}^k + \rho_k x_{i}^{k+1} &\text{ if } i = i_k, \\ 
    \hat{x}_i^{k} &\text{ otherwise.}
    \end{cases}
\end{align}
}
\ENDFOR
\STATE{\textbf{Return:} $(\beta^K, z^K)$ and $\hat{x}^K = (\hat{x}^{K}_1, \dots, \hat{x}_\n^K)$.
}
\end{algorithmic}
\end{algorithm}

Before we move on to the analysis of the algorithm, we define the constant $\AvgDiamSquared$ as the average of the squared diameter of the convex hull of the sets $C_i$, i.e.,
\begin{align*}
    \AvgDiamSquared &:= \frac{1}{\n} \sum_{i=1}^\n \diam(\conv(C_i))^2.
\end{align*}

The next theorem states the convergence result of Algorithm~\ref{alg:BCFW}. The result is almost identical to~\cite[Theorem 2]{lacoste2013block}, with a slightly better dependence on the initialization error. We denote $\Etwo$ as the expectation of Algorithm~\ref{alg:BCFW} conditioned on its input, and in particular conditioned on the value of $d(\bar{\lambda}_T)$.

\begin{thm}
\label{thm:BCFW-convergence}
    Consider Algorithm~\ref{alg:BCFW} with stepsize $\rho_k = \frac{2\n}{k + 2\n}$. After $K$ iterations, it holds that
    \begin{align*}
        \Etwo [F((\beta^K, z^K)) - F^*] \leq \frac{4\AvgDiamSquared}{K } + \frac{4\n^2}{K^2}(F((\beta^0, z^0)) - F^*).
    \end{align*}
\end{thm}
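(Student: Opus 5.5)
The plan is the standard one-step descent argument for (block) Frank-Wolfe, adapted to nonconvex sets $C_i$ by passing to their convex hulls, followed by an unrolled recursion. Write $u^k=(\beta^k,z^k)=\sum_i u_i^k$ with $u_i^k=(\beta_i^k,z_i^k)$, and $h_k := \Etwo[F(u^k)]-F^*$.

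\textbf{Step 1: smoothness and convexity of $F$.} Each coordinate term of $F$ has the form $\frac12\max(\cdot,0)^2$, which is convex with $1$-Lipschitz derivative. Hence $F$ is convex with $1$-Lipschitz gradient, and $\nabla F(u^k)=(\gamma^k,g^k)$ has nonnegative entries.

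\textbf{Step 2: what the oracle solves.} Since $\gamma^k\ge 0$, the oracle \eqref{eq:bcfw-oracle} returns a minimizer over $C_{i_k}$ of the linear map $(\eta,s)\mapsto \gamma^k\eta+(g^k)^\top s$. Set $c_i^{k}:=(h_i(x_i^{k}),A_ix_i^{k})\in C_i$, computed at $x_i^{k+1}$. A linear function has the same minimum over $C_i$ as over $\conv(C_i)$, so $c_i^{k+1}$ also minimizes over $\conv(C_i)$.

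\textbf{Step 3: iterates stay in the convexified set.} By induction from the initialization and the convex combinations \eqref{eq:BCFW-update-beta-z}, each $u_i^k$ lies in $\frac1N\conv(C_i)$. Therefore $\norm{\frac1N c_i^{k+1}-u_i^k}^2\le \diam(\conv C_i)^2/N^2$.

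\textbf{Step 4: one-step bound.} Only block $i_k$ moves, so $u^{k+1}=u^k+\rho_k(\frac1N c_{i_k}^{k+1}-u^k_{i_k})$. The descent lemma gives
\begin{align*}
F(u^{k+1})\le F(u^k)+\rho_k\iprod{\nabla F(u^k)}{\tfrac1N c_{i_k}^{k+1}-u^k_{i_k}}+\frac{\rho_k^2}{2N^2}\diam(\conv C_{i_k})^2 .
\end{align*}
Take the conditional expectation over the uniform $i_k$. The linear term becomes $\frac{\rho_k}{N}\iprod{\nabla F(u^k)}{s^k-u^k}$, where $s^k=\frac1N\sum_i c_i^{k+1}$ is a full Frank-Wolfe vertex. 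The quadratic term becomes $\frac{\rho_k^2\AvgDiamSquared}{2N^2}$.

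\textbf{Step 5: the nonconvexity issue.} This is the step I expect to need care, since $F^*$ is defined over the possibly nonconvex set $\frac1N\sum_i C_i$. Let $u^\sharp$ minimize $F$ over the convex compact set $\frac1N\sum_i\conv(C_i)$. Its value satisfies $F(u^\sharp)\le F^*$. By Step 2, $s^k$ minimizes the linear map over this convexified set, so convexity of $F$ gives
\begin{align*}
\iprod{\nabla F(u^k)}{s^k-u^k}\le \iprod{\nabla F(u^k)}{u^\sharp-u^k}\le F(u^\sharp)-F(u^k)\le F^*-F(u^k).
\end{align*}
Taking total expectations yields the recursion
\begin{align*}
h_{k+1}\le\left(1-\frac{\rho_k}{N}\right)h_k+\frac{\rho_k^2\AvgDiamSquared}{2N^2}.
\end{align*}

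\textbf{Step 6: unrolling.} With $\rho_k=\frac{2N}{k+2N}$ (which lies in $[0,1]$), the recursion reads
\begin{align*}
h_{k+1}\le\frac{k+2N-2}{k+2N}h_k+\frac{2\AvgDiamSquared}{(k+2N)^2}.
\end{align*}
Multiply by $(k+2N)(k+2N-1)$ and use $\frac{k+2N-1}{k+2N}\le 1$ to get
\begin{align*}
(k+2N)(k+2N-1)h_{k+1}\le (k+2N-1)(k+2N-2)h_k+2\AvgDiamSquared .
\end{align*}
Telescoping from $0$ to $K-1$ gives
\begin{align*}
(K+2N-1)(K+2N-2)h_K\le(2N-1)(2N-2)h_0+2K\AvgDiamSquared .
\end{align*}
For $N\ge1$ we have $(K+2N-1)(K+2N-2)\ge K^2$ and $(2N-1)(2N-2)\le 4N^2$. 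Dividing through then gives $h_K\le \frac{2\AvgDiamSquared}{K}+\frac{4N^2}{K^2}h_0$, which implies the stated bound with constant $4\AvgDiamSquared/K$.
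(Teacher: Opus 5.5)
Your proof is correct and follows essentially the paper's route. Both arguments use the descent lemma for the $1$-smooth convex $F$, average over the uniformly sampled block, and use that linear minimization over $C_i$ equals linear minimization over $\conv(C_i)$. This lets you compare with a minimizer of $F$ over $\frac1N\sum_i\conv(C_i)$, whose value is at most $F^*$, and then unroll the resulting recursion.

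The differences are minor. Keeping the $\frac12$ from the descent lemma (the paper drops it) gives you the sharper term $2D^2/K$. Your weighted telescoping with $(k+2N)(k+2N-1)$ replaces the paper's explicit product formulas. Its final division tacitly uses $h_0\ge 0$, which holds because $u^0\in\frac1N\sum_i C_i$; the paper's bound on $\delta_0\prod_k(1-\rho_k/N)$ relies on the same fact.
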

\begin{proof}
    Observe that the objective function $F$ is $1$-smooth, implying that for all $k=0, \dots, K-1$,
    \begin{align*}
        F((\beta^{k+1}, z^{k+1})) &\leq F((\beta^k, z^k)) + \iprod{\nabla F((\beta^k, z^k))}{\begin{pmatrix}\beta^{k+1}\\ z^{k+1}\end{pmatrix} - \begin{pmatrix}\beta^k \\  z^k\end{pmatrix}} + \frac{1}{2}\norm{\begin{pmatrix}\beta^{k+1}\\ z^{k+1}\end{pmatrix} - \begin{pmatrix}\beta^k \\  z^k\end{pmatrix}}^2 \\
        &= F((\beta^k, z^k)) + \iprod{\begin{pmatrix}
            \gamma^k \\ g^k
        \end{pmatrix}}{\begin{pmatrix}\beta^{k+1}\\ z^{k+1}\end{pmatrix} - \begin{pmatrix}\beta^k \\  z^k\end{pmatrix}} + \frac{1}{2}\norm{\begin{pmatrix}\beta^{k+1}\\ z^{k+1}\end{pmatrix} - \begin{pmatrix}\beta^k \\  z^k\end{pmatrix}}^2.
    \end{align*}
    Now observe that
    \begin{align*}
        \begin{pmatrix}\beta^{k+1}\\ z^{k+1}\end{pmatrix} - \begin{pmatrix}\beta^k \\  z^k\end{pmatrix} &= \rho_k \begin{pmatrix}
        \frac{1}{\n}h_{i_k}(x_{i_k}^{k+1}) - \beta_{i_k}^{k} \\  
        \frac{1}{\n}A_{i_k}x_{i_k}^{k+1} - z^k_{i_k}\end{pmatrix}. 
    \end{align*}
Since $\begin{pmatrix}
    \beta^k_{i_k} \\ z^k_{i_k}
\end{pmatrix} \in \frac{1}{\n} \conv(C_{i_k})$, we have
\begin{align*}
    \norm{\begin{pmatrix}
        \frac{1}{\n}h_{i_k}(x_{i_k}^{k+1}) - \beta_{i_k}^{k} \\  
        \frac{1}{\n}A_{i_k}x_{i_k}^{k+1} - z^k_{i_k}\end{pmatrix}}^2 &\leq \frac{1}{\n^2} \diam(\conv(C_{i_k}))^2.
\end{align*}
Letting $\Etwo^k$ be the expectation conditioned on the randomness up to iteration $k$, we then get
\begin{align}
    \Etwo^k[F((\beta^{k+1}, z^{k+1}))] 
        &\leq F((\beta^k, z^k)) + \rho_k \Etwo^k\left[\iprod{\begin{pmatrix}
            \gamma^k \\ g^k
        \end{pmatrix}}{\begin{pmatrix}
        \frac{1}{\n}h_{i_k}(x_{i_k}^{k+1}) - \beta_{i_k}^{k} \\  
        \frac{1}{\n}A_{i_k}x_{i_k}^{k+1} - z^k_{i_k}\end{pmatrix}}\right] + \frac{\rho_k^2}{\n^2} \Etwo^k[\diam(\conv(C_{i_k}))^2]  \nonumber \\
        &= F((\beta^k, z^k)) + \rho_k \Etwo^k\left[\iprod{\begin{pmatrix}
            \gamma^k \\ g^k
        \end{pmatrix}}{\begin{pmatrix}
        \frac{1}{\n}h_{i_k}(x_{i_k}^{k+1}) - \beta_{i_k}^{k} \\  
        \frac{1}{\n}A_{i_k}x_{i_k}^{k+1} - z^k_{i_k}\end{pmatrix}}\right] + \frac{\rho_k^2\AvgDiamSquared}{\n^2}.
        \label{eq:proof-bcfw-1}
\end{align}
We now claim that
\begin{align}
    \label{eq:proof-bcfw-3}\Etwo^k\left[\iprod{\begin{pmatrix}
            \gamma^k \\ g^k
        \end{pmatrix}}{\begin{pmatrix}
        \frac{1}{\n}h_{i_k}(x_{i_k}^{k+1}) - \beta_{i_k}^{k} \\  
        \frac{1}{\n}A_{i_k}x_{i_k}^{k+1} - z^k_{i_k}\end{pmatrix}}\right] \leq \frac{1}{\n} (F^* - F((\beta^k, z^k))).
\end{align}
To do so, recall that $x_{i_k}^{k+1}$ is obtained through solving the oracle~\eqref{eq:bcfw-oracle}, so that
\begin{align*}
    x_{i_k}^{k+1} = x_{i_k}^*(\gamma^k, g^k) = \argmin_{x_{i_k} \in X_{i_k}} \{ \gamma^k h_{i_k}(x_{i_k}) + (g^k)^\top A_{i_k}x_{i_k} \}.
\end{align*}
Therefore, 
\begin{align}
    \Etwo^k\left[\iprod{\begin{pmatrix}
            \gamma^k \\ g^k
        \end{pmatrix}}{\begin{pmatrix}
        \frac{1}{\n}h_{i_k}(x_{i_k}^{k+1}) - \beta_{i_k}^{k} \\  
        \frac{1}{\n}A_{i_k}x_{i_k}^{k+1} - z^k_{i_k}\end{pmatrix}}\right] &=  \iprod{\begin{pmatrix}
            \gamma^k \\ g^k
        \end{pmatrix}}{\sum_{i=1}^\n \frac{1}{\n} \begin{pmatrix}
        \frac{1}{\n}h_i(x_i^*(\gamma^k, g^k)) - \beta^k_i \\ \frac{1}{\n}A_i x_i^*(\gamma^k, g^k) - z^k_i \\
    \end{pmatrix}} \nonumber \\
    &= \frac{1}{\n} \iprod{\begin{pmatrix}
            \gamma^k \\ g^k
        \end{pmatrix}}{\sum_{i=1}^\n  \begin{pmatrix}
        \frac{1}{\n}h_i(x_i^*(\gamma^k, g^k)) \\ \frac{1}{\n}A_i x_i^*(\gamma^k, g^k) \\
    \end{pmatrix}  - \begin{pmatrix}
        \beta^k \\ z^k
    \end{pmatrix}}. \label{eq:proof-bcfw-2}
\end{align}
Now, let $(\beta^*, z^*)$ be the minimizer of $F$ over $\frac{1}{\n} \sum_i \conv(C_i)$. Note that by the relationship
\begin{align*}
    x_{i}^*(\gamma^k, g^k) = \argmin_{x_{i} \in X_{i}} \{ \gamma^k h_{i}(x_{i}) + (g^k)^\top A_{i}x_{i} \},
\end{align*}
we actually have that
\begin{align*}
    \sum_{i=1}^\n  \begin{pmatrix}
        \frac{1}{\n}h_i(x_i^*(\gamma^k, g^k)) \\ \frac{1}{\n}A_i x_i^*(\gamma^k, g^k) \\
    \end{pmatrix} \in \argmin_{(\beta, z) \in \sum_{i=1}^\n C_i} \iprod{\begin{pmatrix}
        \gamma^k \\ g^k
    \end{pmatrix}}{\begin{pmatrix}
        \beta \\ z
    \end{pmatrix}} \subset   \argmin_{(\beta, z) \in \sum_{i=1}^\n \conv(C_i)} \iprod{\begin{pmatrix}
        \gamma^k \\ g^k
    \end{pmatrix}}{\begin{pmatrix}
        \beta \\ z
    \end{pmatrix}},
\end{align*}
by linearity. Therefore,
\begin{align*}
    \iprod{\begin{pmatrix}
            \gamma^k \\ g^k
        \end{pmatrix}}{\sum_{i=1}^\n  \begin{pmatrix}
        \frac{1}{\n}h_i(x_i^*(\gamma^k, g^k)) \\ \frac{1}{\n}A_i x_i^*(\gamma^k, g^k) \\
    \end{pmatrix}} \leq \iprod{\begin{pmatrix}
            \gamma^k \\ g^k
        \end{pmatrix}}{   \begin{pmatrix}
        \beta^* \\ z^*\\
    \end{pmatrix}}.
\end{align*}
Going back to~\eqref{eq:proof-bcfw-2}, 
\begin{align*}
    \Etwo^k\left[\iprod{\begin{pmatrix}
            \gamma^k \\ g^k
        \end{pmatrix}}{\begin{pmatrix}
        \frac{1}{\n}h_{i_k}(x_{i_k}^{k+1}) - \beta_{i_k}^{k} \\  
        \frac{1}{\n}A_{i_k}x_{i_k}^{k+1} - z^k_{i_k}\end{pmatrix}}\right]  &\leq \frac{1}{\n}  \iprod{\begin{pmatrix}
            \gamma^k \\ g^k
        \end{pmatrix}}{   \begin{pmatrix}
        \beta^* \\ z^*\\
    \end{pmatrix} - \begin{pmatrix}
        \beta^k \\ z^k
    \end{pmatrix} } \\
    &\leq \frac{1}{\n} \left( F^* - F(\beta^k, z^k)\right),
\end{align*}
by first-order characterization of the convexity of $F$, thus proving~\eqref{eq:proof-bcfw-3}. Plugging this back into~\eqref{eq:proof-bcfw-1}, we get
\begin{align*}
    \Etwo^k[F((\beta^{k+1}, z^{k+1}))] 
        &\leq F((\beta^k, z^k)) - \frac{\rho_k}{\n}\left( F(\beta^k, z^k) - F^*\right)  + \frac{\rho_k^2\AvgDiamSquared}{\n^2}.
\end{align*}
Letting $\delta_k = \Etwo[F(\beta^k, z^k) - F^*]$, taking total expectation gives
\begin{align*}
    \delta_{k+1} \leq (1- \frac{\rho_k}{\n}) \delta_k + \frac{\rho_k^2 \AvgDiamSquared}{\n^2}.
\end{align*}
Unrolling gives
\begin{align*}
    \delta_K \leq \delta_0 \prod_{k=0}^{K-1}(1- \frac{\rho_k}{\n}) + \frac{\AvgDiamSquared}{\n^2} \sum_{k=0}^{K-1} \rho_k^2 \prod_{j=k+1}^{K-1} (1 - \frac{\rho_j}{\n}).
\end{align*}
Recalling that $\rho_k = \frac{2\n }{k + 2\n}$, we have
\begin{align*}
    \prod_{k=0}^{K-1}(1- \frac{\rho_k}{\n}) &= \prod_{k=0}^{K-1} \frac{k+2\n - 2}{k + 2\n} = \frac{(2\n - 2)(2\n -1)}{(2\n + K - 2) (2\n + K -1)} &\leq \frac{4\n^2}{K^2},
\end{align*}
and
\begin{align*}
    \sum_{k=0}^{K-1} \rho_k^2 \prod_{j=k+1}^K (1 - \frac{\rho_j}{\n}) &= \sum_{k=0}^{K-1} \frac{4\n^2}{(k + 2\n)^2} \prod_{j=k+1}^{K-1} \frac{2\n + j - 2}{2\n + j} \\
    &= \sum_{k=0}^{K-1} \frac{4\n^2}{(k + 2\n)^2} \frac{(2\n + k -1)(2\n + k)}{(2\n + K-2)(2\n + K-1)} \\
    &\leq \sum_{k=0}^{K-1} \frac{4\n^2}{(2\n + K-2)(2\n + K-1)} \\
    &= \frac{4\n^2 K}{(2\n + K-2)(2\n + K-1)} \leq \frac{4\n^2}{K}.
\end{align*}
This ends the proof.
\end{proof}

From the above theorem and the structure of $F$, we can derive rates for the difference between $\beta^K$ and $d^*$, as well as between $z^K$ and $b$. 

\begin{thm}
\label{thm:convergence-bcfw-specified}
    Consider Algorithm~\ref{alg:BCFW} with stepsize $\rho_k = \frac{2\n}{k + 2\n}$. After $K$ iterations, it holds that
    \begin{align*}
        &\Etwo\left[\beta^K -d^* \right] \leq (d^* - d(\bar{\lambda}_T)) + \frac{2\SqrtAvgDiamSquared}{\sqrt{K }} + \frac{2\n}{K} \sqrt{F((\beta^0, z^0)) - F^*},\\
        &\Etwo\left[\norm{z^K - b }_+\right] \leq (d^* - d(\bar{\lambda}_T)) + \frac{2\SqrtAvgDiamSquared}{\sqrt{K }} + \frac{2\n}{K} \sqrt{F((\beta^0, z^0)) - F^*}.
    \end{align*}
\end{thm}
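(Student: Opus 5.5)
The plan is to derive both bounds from Theorem~\ref{thm:BCFW-convergence} and Lemma~\ref{lem:second-stage-optimal-value-bound}. Both target quantities are controlled by $\sqrt{F((\beta^K, z^K))}$: for the first component,
\begin{align*}
    \beta^K - d^* \leq \beta^K - d(\bar{\lambda}_T) \leq \max(\beta^K - d(\bar{\lambda}_T), 0) \leq \sqrt{2F((\beta^K, z^K))},
\end{align*}
where the first inequality uses $d(\bar{\lambda}_T) \leq d^*$ (optimality of $d^*$ for the dual), so that $\beta^K - d^* = (\beta^K - d(\bar{\lambda}_T)) - (d^* - d(\bar{\lambda}_T))$. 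Note that this route actually gives $\beta^K - d^* \leq \sqrt{2F} - (d^*-d(\bar\lambda_T))$. Since the statement has $+(d^*-d(\bar{\lambda}_T))$, the crude bound $\beta^K - d^* \leq \sqrt{2F((\beta^K,z^K))}$ already suffices. Similarly, $\norm{z^K - b}_+ \leq \sqrt{2F((\beta^K, z^K))}$ directly from the definition of $F$.

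Next I take expectations and use Jensen's inequality (concavity of the square root): $\Etwo[\sqrt{2F((\beta^K,z^K))}] \leq \sqrt{2\Etwo[F((\beta^K,z^K))]}$. I then write $\Etwo[F((\beta^K,z^K))] = F^* + \Etwo[F((\beta^K,z^K)) - F^*]$. Lemma~\ref{lem:second-stage-optimal-value-bound} gives $2F^* \leq (d^*-d(\bar{\lambda}_T))^2$, and Theorem~\ref{thm:BCFW-convergence} gives
\begin{align*}
    2\Etwo[F((\beta^K,z^K)) - F^*] \leq \frac{8\AvgDiamSquared}{K} + \frac{8\n^2}{K^2}(F((\beta^0,z^0)) - F^*).
\end{align*}
Finally, subadditivity $\sqrt{a+b+c}\leq \sqrt a + \sqrt b + \sqrt c$ splits this into $(d^*-d(\bar\lambda_T)) + \frac{2\sqrt2 \SqrtAvgDiamSquared}{\sqrt K} + \frac{2\sqrt2\n}{K}\sqrt{F((\beta^0,z^0))-F^*}$.

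The main obstacle is the constants. The route above produces $2\sqrt2$ rather than $2$. I would first try to sharpen it by bounding the two components of $F$ separately rather than through $2F$. If $a = \max(\beta^K - d(\bar\lambda_T),0)$ and $c = \norm{z^K-b}_+$, then $a^2 + c^2 = 2F$, but each of $a$ and $c$ alone is only bounded by $\sqrt{2F}$, so this does not remove the factor. I expect the intended argument accepts a mild slack, perhaps via $F - F^* \le$ the stated bound with the $\frac12$ absorbed differently. If the factor $\sqrt2$ cannot be removed, I would state the bound with $2\sqrt2$, which does not affect the $O$-complexity in later results. Everything else is routine.
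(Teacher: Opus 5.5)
Your route is the paper's route: bound $\beta^K-d^*$ and $\norm{z^K-b}_+$ by $\sqrt{2F((\beta^K,z^K))}$, apply Jensen, invoke Theorem~\ref{thm:BCFW-convergence}, split the square root, and replace $\sqrt{2F^*}$ by $d^*-d(\bar{\lambda}_T)$ via Lemma~\ref{lem:second-stage-optimal-value-bound}. Your worry about the constant is justified, and no idea is missing on your side. The paper gets the coefficient $2$ only through an arithmetic slip at the step citing Theorem~\ref{thm:BCFW-convergence}, where it writes
\[
\Etwo\bigl[2F((\beta^K,z^K))\bigr] \leq 2F^* + \frac{4\AvgDiamSquared}{K} + \frac{4\n^2}{K^2}\bigl(F((\beta^0,z^0))-F^*\bigr).
\]
This doubles $F^*$ but not the other two terms. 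With Theorem~\ref{thm:BCFW-convergence} as stated, the correct coefficients are $8$, which is exactly your $2\sqrt{2}$.

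Part of this can be recovered legitimately. The proof of Theorem~\ref{thm:BCFW-convergence} drops the $\frac12$ of the $1$-smoothness inequality: the quadratic term is at most $\frac{\rho_k^2}{2\n^2}\diam(\conv(C_{i_k}))^2$, not $\frac{\rho_k^2}{\n^2}\diam(\conv(C_{i_k}))^2$. Keeping it gives $\Etwo[F((\beta^K,z^K))-F^*]\leq \frac{2\AvgDiamSquared}{K}+\frac{4\n^2}{K^2}(F((\beta^0,z^0))-F^*)$, and your argument then yields exactly $\frac{2\SqrtAvgDiamSquared}{\sqrt{K}}$.

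The initialization term cannot be repaired this way. The contraction factor $\prod_{k=0}^{K-1}(1-\rho_k/\n)=\frac{(2\n-2)(2\n-1)}{(2\n+K-2)(2\n+K-1)}$ is asymptotically $4\n^2/K^2$ for $K\gg\n\gg 1$. So this analysis gives roughly $\frac{2\sqrt{2}\,\n}{K}\sqrt{F((\beta^0,z^0))-F^*}$ there and cannot do better. What is actually proved, by you and by the paper, is the statement with $2\sqrt{2}$ in the last term. As you say, this only changes constants in Theorem~\ref{thm:convergence-2-stage-algorithm}, not any complexity bound.

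Finally, keep your sign observation rather than discarding it. Subtracting $d^*-d(\bar{\lambda}_T)\geq\sqrt{2F^*}$ instead of dropping it shows that the first inequality holds with no $d^*-d(\bar{\lambda}_T)$ term on the right-hand side at all.
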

\begin{proof}
We have
\begin{align*}
        \Etwo[\beta^K - d^*] &\leq \Etwo [\beta^K - d(\bar{\lambda}_T)] \\
        &\leq \Etwo[\max(\beta^K - d(\bar{\lambda}_T), 0)] \\
        &\leq \sqrt{\Etwo[\max(\beta^K - d(\bar{\lambda}_T), 0)^2]} \tag{Jensen's inequality} \\
        &\leq \sqrt{\Etwo[2F((\beta^K, z^K))]} \\
        &\leq \sqrt{2F^* + \frac{4}{K} \AvgDiamSquared + \frac{4\n^2}{K^2} (F((\beta^0, z^0)) - F^*)} \tag{Theorem~\ref{thm:BCFW-convergence}} \\
        &\leq \sqrt{2 F^*} + \frac{2\SqrtAvgDiamSquared}{\sqrt{K}} + \frac{2\n}{K} \sqrt{F((\beta^0, z^0)) - F^*}  \\
        &\leq (d^* - d(\bar{\lambda}_T)) + \frac{2\SqrtAvgDiamSquared}{\sqrt{K}} + \frac{2\n}{K} \sqrt{F((\beta^0, z^0)) - F^*} \tag{Lem.~\ref{lem:second-stage-optimal-value-bound}},
    \end{align*}
    which proves the claim for the first inequality. For the second one, we have
    \begin{align*}
       \Etwo \left[\norm{z^K - b}_+\right] 
        &\leq \sqrt{\Etwo \norm{z^K - b}_+^2} \\
        &\leq \sqrt{\Etwo [2F((\beta^K, z^K))]} \\
        &\leq \sqrt{2F^* + \frac{4}{K} \AvgDiamSquared + \frac{4\n^2}{K^2} (F((\beta^0, z^0)) - F^*)} \tag{Theorem~\ref{thm:BCFW-convergence}} \\
        &\leq \sqrt{2 F^*} + \frac{2\SqrtAvgDiamSquared}{\sqrt{K}} + \frac{2\n}{K} \sqrt{F((\beta^0, z^0)) - F^*}  \\
        &\leq (d^* - d(\bar{\lambda}_T)) + \frac{2\SqrtAvgDiamSquared}{\sqrt{K}} + \frac{2\n}{K} \sqrt{F((\beta^0, z^0)) - F^*} \tag{Lem.~\ref{lem:second-stage-optimal-value-bound}}.
    \end{align*}
\end{proof}

To derive a convergence rate for the output $\hat{x}^K$ of Algorithm~\ref{alg:BCFW}, we only need to relate it to $\beta^K$ and $z^K$. Note that from~\eqref{eq:BCFW-update-x-hat}, we have that $\hat{x}_i^K$ is a convex combination of $x_i^0$ and of the outputs $x_i^{k+1}$ of oracle~\eqref{eq:bcfw-oracle} on iterations $k$ such that $i_k = i$. In other words, letting $J_i = \{0\} \cup  \{k\in \{1, \dots, K\} \mid i_{k-1} =i\}$, we have
\begin{align*}
    \hat{x}_i^K = \sum_{k \in J_i} \omega_i^k  x_i^k,
\end{align*}
where $\omega_i \geq 0$ and $\sum_{k\in J_i} \omega_i^k = 1$. Similarly, from~\eqref{eq:BCFW-update-beta-z},
\begin{align}
\label{eq:bcfw-betaK-zK-output}
    \begin{pmatrix}\beta_i^K \\ z_i^K \end{pmatrix} = \sum_{k \in J_i} \omega_i^k \frac{1}{\n} \begin{pmatrix} h_i(x_i^k) \\ A_i x_i^k \end{pmatrix}, \ i =1, \dots, \n. 
\end{align}

We are now able to derive convergence guarantees for $\hat{x}^K$.

\begin{cor}
    \label{cor:stage2-convergence}
    Consider Algorithm~\ref{alg:BCFW} with stepsize $\rho_k = \frac{2\n}{k + 2\n}$. After $K$ iterations, the output $\hat{x}^K$ satisfies
    \begin{align*}
        &\Etwo \left[\frac{1}{\n}\sum_{i=1}^\n h_i(\hat{x}_i^K) - d^* \right] \leq (d^* - d(\bar{\lambda}_T)) + \frac{2\SqrtAvgDiamSquared}{\sqrt{K }} + \frac{2\n}{K} \sqrt{F((\beta^0, z^0)) - F^*},\\
        &\Etwo \norm{\frac{1}{\n} \sum_{i=1}^\n A_i \hat{x}_i^K - b }_+ \leq (d^* - d(\bar{\lambda}_T)) + \frac{2\SqrtAvgDiamSquared}{\sqrt{K }} + \frac{2\n}{K} \sqrt{F((\beta^0, z^0)) - F^*}.
    \end{align*}
\end{cor}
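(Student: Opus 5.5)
The plan is to reduce both claims to Theorem~\ref{thm:convergence-bcfw-specified}. We compare the primal quantities evaluated at $\hat{x}^K$ with the aggregated BCFW iterates $(\beta^K, z^K)$ through the convex-combination representation~\eqref{eq:bcfw-betaK-zK-output}. The key observation is that $\hat{x}_i^K$ and $(\beta_i^K, z_i^K)$ use the \emph{same} weights $\omega_i^k$, $k \in J_i$. This holds because both updates~\eqref{eq:BCFW-update-beta-z} and~\eqref{eq:BCFW-update-x-hat} apply the identical step $\rho_k$ to the same index $i_k$, starting respectively from $x_i^0$ and $\frac{1}{\n}(h_i(x_i^0), A_i x_i^0)$. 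I would confirm this first by a short induction on $k$.

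For the constraint term, linearity gives the identity directly. Since $A_i \hat{x}_i^K = \sum_{k\in J_i}\omega_i^k A_i x_i^k$, we have
\begin{align*}
\frac{1}{\n}\sum_{i=1}^\n A_i \hat{x}_i^K = \sum_{i=1}^\n \sum_{k\in J_i}\omega_i^k \frac{1}{\n} A_i x_i^k = \sum_{i=1}^\n z_i^K = z^K.
\end{align*}
The second inequality is then exactly the second bound of Theorem~\ref{thm:convergence-bcfw-specified}.

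For the objective term, convexity of each $h_i$ and Jensen's inequality with the weights $\omega_i^k$ give $h_i(\hat{x}_i^K) \le \sum_{k\in J_i}\omega_i^k h_i(x_i^k) = \n \beta_i^K$. Here all points $x_i^k$ lie in $\domhi$, so the values are finite, and $\hat{x}_i^K \in \domhi$ by convexity of the domain. Averaging over $i$ yields $\frac{1}{\n}\sum_i h_i(\hat{x}_i^K) \le \beta^K$ pointwise, that is, for every realization of the random indices. Taking $\Etwo$ and applying the first bound of Theorem~\ref{thm:convergence-bcfw-specified} then finishes the proof.

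The only delicate step is the bookkeeping behind the shared-weights claim. One must check that the oracle output $x_i^{k+1}$ indexed in $J_i$ (which includes the shift $k \mapsto k+1$ in the definition of $J_i$) matches the point used in both updates, and that the initial weight on $x_i^0$ coincides for both sequences. Once this is verified, everything else is direct substitution.
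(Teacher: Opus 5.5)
Your proposal is correct and follows the paper's proof. Jensen's inequality with the shared weights $\omega_i^k$ gives $\frac{1}{\n}\sum_i h_i(\hat{x}_i^K)\le \beta^K$, linearity gives $\frac{1}{\n}\sum_i A_i\hat{x}_i^K = z^K$, and Theorem~\ref{thm:convergence-bcfw-specified} finishes; your explicit induction checking that $\hat{x}_i^K$ and $(\beta_i^K, z_i^K)$ carry identical weights spells out what the paper asserts directly from the two update rules.
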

\begin{proof}
    By convexity of $h_i$, note that we have
\begin{align*}
    \frac{1}{\n}\sum_{i=1}^\n h_i(\hat{x}_i^K) &= \frac{1}{\n}\sum_{i=1}^\n h_i\left(\sum_{k \in J_i} \omega_i^k x_i^k\right) 
    \leq \frac{1}{\n}\sum_{i=1}^\n \sum_{k \in J_i} \omega_i^k h_i(x_i^k)
    = \sum_{i=1}^\n \beta_i^K = \beta^K
\end{align*}
Moreover, we also have
    \begin{align*}
        \frac{1}{\n}\sum_{i=1}^\n A_i\hat{x}_i^K &= \frac{1}{\n}\sum_{i=1}^\n A_i\sum_{k \in J_i} \omega_i^k x_i^k = z^K.
    \end{align*}
Applying Theorem~\ref{thm:convergence-bcfw-specified} yields the result.
\end{proof}

In the next section, we combine this stage 2 algorithm with the stochastic dual subgradient from stage 1 to analyze a full method to solve~\eqref{eq:primal}.

\subsection{Combining stage 1 and stage 2}
 The guarantees of the BCFW algorithm derived in Corollary~\ref{cor:stage2-convergence} depend on $d^* - d(\bar{\lambda}_T)$ and on $F((\beta^0, z^0))$, i.e., on the input to this algorithm. We have already derived bounds on $d^* - d(\bar{\lambda}_T)$ in Proposition~\ref{prop:dual-convergence-stochastic-dual-subgradient}. Moreover, we shall also derive bounds on $F((\beta^0, z^0))$ by initializing each $x^0_i$ of Algorithm~\ref{alg:BCFW} with the output $\bar{x}_{i, T}$ of Algorithm~\ref{alg:stochastic-dual-subgradient-separable-primal}. We summarize the full method in Algorithm~\ref{alg:2-stage-algorithm}, and derive its convergence guarantees in the next theorem. We denote $\E$ as the expectation with respect to both Algorithm~\ref{alg:stochastic-dual-subgradient-separable-primal} and~\ref{alg:BCFW}.

\begin{algorithm}
\caption{Two-stage algorithm to solve~\eqref{eq:primal}.}
\label{alg:2-stage-algorithm}
\begin{algorithmic}[1]
\STATE{\textbf{Input}: $\lambda_0 \in \R^m_+$, number of iterations $T$ of stochastic dual subgradient algorithm, stepsize sequence $\{\alpha_t\}_{t=0}^{T-1}$, number of iterations $K$ of BCFW.}
\STATE{Run Algorithm~\ref{alg:stochastic-dual-subgradient-separable-primal} with input $\lambda_0$ for $T$ iterations with stepsize $\{\alpha_t\}_{t=0}^{T-1}$.}
\STATE{Obtain output $\bar{\lambda}_T$ and $\bar{x}_{i, T}$, for $i=1, \dots, \n$.}
\STATE{Run Algorithm~\ref{alg:BCFW} with input $d(\bar{\lambda}_T)$ and $x_i^0 = \bar{x}_{i, T}$ for all $i=1, \dots, \n$.}
\STATE{Obtain output $\hat{x}^K = (\hat{x}_1^K, \dots, \hat{x}_N^K)$.}
\STATE{\textbf{Return:} $\hat{x}^K$.}
\end{algorithmic}
\end{algorithm}

\begin{thm}
    \label{thm:convergence-2-stage-algorithm}
Suppose Assumption~\ref{ass:existence-dual-maximizer} holds and consider Algorithm~\ref{alg:2-stage-algorithm} with input $\lambda_0 = 0_m$ and constant stepsize $\alpha_t = \alpha = \frac{\alphabar}{\Gtilde \sqrt{T}}$. The output $\hat{x}^K$ satisfies
    \begin{align*}
        &\E\left[\frac{1}{\n}\sum_{i=1}^\n h_i(\hat{x}_i^K) - d^*\right] \leq \ErrTK(T, K), \\
        &\E\norm{\frac{1}{\n} \sum_{i=1}^\n A_i \hat{x}_i^K - b }_+\leq \ErrTK(T, K),
    \end{align*}
    where $\ErrTK(T, K)$ is defined as
    \begin{align*}
        \ErrTK(T, K) := P_1 \frac{1}{\sqrt{T}} + P_2 \frac{1}{\sqrt{K}} + \frac{2 \n}{K} \left( P_3 \sqrt{\frac{\n-1}{T}} + P_4 \frac{1}{\sqrt{T}} + P_5 \frac{\n}{T}\right),
    \end{align*}
    where $P_1, P_2, P_3, P_4$ and $P_5$ are constants defined as
    \begin{align*}
        P_1 & := \Gtilde \left(\frac{\norm{\lambda^*}^2}{2 \alphabar } + \frac{\alphabar}{2}\right) , \ \quad 
        P_2 : = 2 D, \ \quad 
        P_3 := \frac{H + ( \norm{\lambda^*} + 1) \Gtilde}{\sqrt{2}}, \\
        P_4 &:= \left( \frac{\alphabar}{\sqrt{2}} + \frac{\norm{\lambda^*}^2}{2\sqrt{2} \alphabar} +  \frac{\norm{\lambda^*} + 1}{\sqrt{2}}\left(1 + \frac{2\norm{\lambda^*}}{\alphabar}\right)\right), \quad
        P_5  := \Gtilde \left(\frac{3\norm{\lambda^*} + 1 + \alphabar }{\sqrt{2}} \right).
    \end{align*}
\end{thm}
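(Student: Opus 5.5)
The plan is to condition on the output of Stage 1, apply Corollary~\ref{cor:stage2-convergence}, and then take the expectation $\Eone$ over Stage 1, using the tower property $\E = \Eone[\Etwo[\cdot]]$. Corollary~\ref{cor:stage2-convergence}, applied with $x_i^0 = \bar{x}_{i,T}$, bounds both the suboptimality and the infeasibility by the same three quantities:
\begin{align*}
(d^* - d(\bar{\lambda}_T)) + \frac{2D}{\sqrt{K}} + \frac{2\n}{K}\sqrt{F((\beta^0,z^0)) - F^*}.
\end{align*}
The second term gives $P_2/\sqrt{K}$ directly. For the first term, $\Eone[d^* - d(\bar\lambda_T)] \le P_1/\sqrt{T}$ by Proposition~\ref{prop:dual-convergence-stochastic-dual-subgradient} with $\lambda_0 = 0_m$. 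So the only real work is the third term.

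For the third term I will use $F^*\ge 0$ to drop it, and then apply Jensen's inequality to the square root: $\Eone\sqrt{F((\beta^0,z^0))} \le \Eone[\sqrt{F((\beta^0,z^0))}]$. Next I bound $\sqrt{F}$ from its definition~\eqref{eq:F-definition}:
\begin{align*}
\sqrt{F((\beta^0,z^0))} \le \frac{1}{\sqrt2}\Big(\max(\beta^0 - d(\bar\lambda_T),0) + \norm{z^0-b}_+\Big).
\end{align*}
Here $\beta^0 = \frac1\n\sum_i h_i(\bar x_{i,T})$ and $z^0 = \frac1\n\sum_i A_i\bar x_{i,T}$. I then split $\beta^0 - d(\bar\lambda_T) = (\beta^0 - d^*) + (d^* - d(\bar\lambda_T))$ and use $\max(a+c,0)\le \max(a,0)+c$ for $c\ge0$.

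This reduces everything to Stage-1 primal quantities, which Proposition~\ref{prop:primal-convergence-stochastic-dual-subgradient} controls. It gives the infeasibility bound $\Gtilde\sqrt{(\n-1)/T} + (2\Gtilde\norm{\lambda^*}/\alphabar + \Gtilde)/\sqrt T + \n\Gtilde/T$. It also gives the suboptimality bound $H\sqrt{(\n-1)/T} + \alphabar\Gtilde/(2\sqrt T) + \n\Gtilde(2\norm{\lambda^*}+\alphabar)/T$, and the dual gap contributes $P_1/\sqrt T$.

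\textbf{Main obstacle.} Proposition~\ref{prop:primal-convergence-stochastic-dual-subgradient} bounds $\Eone[\beta^0 - d^*]$, but not $\Eone[\max(\beta^0-d^*,0)]$. The positive part cannot be pulled through the expectation. I would handle this by revisiting the proof of that proposition, where the decomposition into $\Phi_1$ and $\Phi_2$ bounds $\beta^0$ pathwise up to terms whose expectations are controlled. Failing that, I would instead use $\beta^0 - d(\bar\lambda_T) \le \beta^0 - d(\bar\lambda_T)$ together with $\max(\beta^0 - d(\bar\lambda_T),0)\le \abs{\beta^0-d^*} + (d^*-d(\bar\lambda_T))$, and accept bounding $\abs{\cdot}$ through the same decomposition.

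Finally, I collect the coefficients of $\sqrt{(\n-1)/T}$, $1/\sqrt T$ and $\n/T$, grouping the $1/\sqrt{2}$ factors and the $\norm{\lambda^*}+1$ factors, with crude bounds like $T-1+\n\ge T$, to match $P_3, P_4, P_5$. The same total bound applies to both conclusions of the theorem, which gives $\ErrTK(T,K)$. I do not expect exact agreement with the stated constants without care, but the structure $P_3\sqrt{(\n-1)/T}+P_4/\sqrt T+P_5\n/T$ follows from this route.
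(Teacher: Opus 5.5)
Your reduction matches the paper's up to the step you flag as the main obstacle, but you are missing the one idea that resolves it, and neither proposed fix closes the gap as stated. The missing step is a pathwise lower bound from weak duality. For any $x_i \in \domhi$ you have $d(\lambda^*) \le \frac{1}{\n}\sum_i \bigl(h_i(x_i) + (\lambda^*)^\top (A_i x_i - b)\bigr)$. Taking $x_i = \bar{x}_{i,T}$ and using $\lambda^* \ge 0$ gives
\begin{align*}
\beta^0 - d(\bar{\lambda}_T) \ \ge\ \beta^0 - d^* \ \ge\ -(\lambda^*)^\top \ProjPos{z^0 - b}.
\end{align*}
Combine this with $\max(a,0) \le a + \abs{c}$ whenever $a \ge c$ (Lemma~\ref{lem:max-inequality}) and Cauchy--Schwarz. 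You get $\max(\beta^0 - d^*,0) \le \beta^0 - d^* + \norm{\lambda^*}\norm{z^0-b}_+$, and your split $\max(a+c,0)\le\max(a,0)+c$ is then fine. The positive part is gone, so you can take $\Eone$ and use the \emph{signed} suboptimality bound of Proposition~\ref{prop:primal-convergence-stochastic-dual-subgradient}, together with Proposition~\ref{prop:dual-convergence-stochastic-dual-subgradient} and the infeasibility bound. The resulting factor $\norm{\lambda^*}+1$ multiplying the infeasibility bound is exactly where $P_3$, $P_4$, $P_5$ come from. Without this step the stated constants cannot be recovered.

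Your fallbacks do not work as described. The $\Phi_1/\Phi_2$ decomposition does not bound $\beta^0$ pathwise. The proof replaces $\sum_{t} \bigl(h_{i_t}(x_{i_t}^*(\lambda_t)) + \lambda_t^\top (A_{i_t}x_{i_t}^*(\lambda_t) - b)\bigr)$ by $\sum_t d(\lambda_t) \le (T-1)d^*$ only after conditional expectation. Pathwise, single-agent Lagrangian values can exceed $d^*$, so this sum can exceed $(T-1)d^*$. Rescuing that route needs an extra martingale second-moment estimate using bounds on $\Eone\norm{\lambda_t}^2$. That yields a bound of the same order, but with different constants (for instance an additional $H/\sqrt{T}$-type contribution inside the $\frac{2\n}{K}$ factor), not the stated $\ErrTK(T,K)$. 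Your absolute-value fallback needs a lower bound on $\beta^0 - d^*$, which is exactly the weak-duality inequality above; once you have it, the absolute value is unnecessary. Finally, the line ``$\Eone\sqrt{F}\le\Eone[\sqrt{F}]$ by Jensen'' is vacuous. No Jensen is needed: you simply take $\Eone$ of the Corollary's bound after dropping $F^*\ge 0$.
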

\begin{proof}
First observe that by the tower property,
\begin{align*}
    \E\left[\frac{1}{\n}\sum_{i=1}^\n h_i(\hat{x}_i^K) - d^*\right] = \Eone \left[ \Etwo \left[\frac{1}{\n}\sum_{i=1}^\n h_i(\hat{x}_i^K) - d^* \right] \right].
\end{align*}
By Corollary~\ref{cor:stage2-convergence}, we thus get
\begin{align*}
    \E\left[\frac{1}{\n}\sum_{i=1}^\n h_i(\hat{x}_i^K) - d^*\right] \leq \Eone \left[ (d^* - d(\bar{\lambda}_T)) + \frac{2\SqrtAvgDiamSquared}{\sqrt{K }} + \frac{2\n}{K} \sqrt{F((\beta^0, z^0)) - F^*}\right].
\end{align*}
By the same argument, we also get
    \begin{align*}
    \E \norm{\frac{1}{\n} \sum_{i=1}^\n A_i \hat{x}_i^K - b }_+ \leq  \Eone \left[ (d^* - d(\bar{\lambda}_T)) + \frac{2\SqrtAvgDiamSquared}{\sqrt{K }} + \frac{2\n}{K} \sqrt{F((\beta^0, z^0)) - F^*}\right].
    \end{align*}
    Therefore, to obtain the result all we have to do is bound $\Eone [d^* - d(\bar{\lambda}_T)]$ and $\Eone [\sqrt{F((\beta^0, z^0)) - F^*}]$,
    where this time expectation is taken with respect to the randomness induced by Algorithm~\ref{alg:stochastic-dual-subgradient-separable-primal}. First recall Proposition~\ref{prop:dual-convergence-stochastic-dual-subgradient} where we showed
    \begin{align*}
        \Eone [d^* - d(\bar{\lambda}_T)] \leq \frac{\Gtilde \norm{\lambda^*}^2}{2\alphabar\sqrt{T}} + \frac{\Gtilde \alphabar}{2\sqrt{T}}.
    \end{align*}
    Let us then look at $\Eone [\sqrt{F((\beta^0, z^0)) - F^*}]$. Since $F^* \geq 0$ so we will only bound $\Eone [\sqrt{F((\beta^0, z^0))}]$. Next, recall that we initialize the second stage with the output of the first stage, i.e., we have
    \begin{align*}
        \begin{pmatrix}
            \beta^0 \\ z^0 
        \end{pmatrix} = \sum_{i=1}^\n \frac{1}{\n} \begin{pmatrix}
            h_i(\bar{x}_{i, T}) \\ A_i \bar{x}_{i, T}
        \end{pmatrix},
    \end{align*}
    where $\bar{x}_{i, T}$ is the output of Algorithm~\ref{alg:stochastic-dual-subgradient-separable-primal}. We then have
    \begin{align}
        \Eone [\sqrt{F((\beta^0, z^0))}] &= \Eone  
        \left[\sqrt{\frac{1}{2} \max( \frac{1}{\n} \sum_{i=1}^\n h_i(\bar{x}_{i, T}) - d(\bar{\lambda}_T), 0)^2 + \frac{1}{2} \norm{\frac{1}{\n} \sum_{i=1}^\n A_i \bar{x}_{i, T} - b }_+^2} \right] \nonumber \\
        &\leq \frac{1}{\sqrt{2}} \left(\Eone \left[\max( \frac{1}{\n} \sum_{i=1}^\n h_i(\bar{x}_{i, T}) - d(\bar{\lambda}_T), 0)\right] + \Eone \left[\norm{\frac{1}{\n} \sum_{i=1}^\n A_i \bar{x}_{i, T} - b }_+\right] \right).
        \label{eq:proof-2-stage-convergence-1}
    \end{align}
    We can directly control the second term in the above sum using Proposition~\ref{prop:primal-convergence-stochastic-dual-subgradient}. The first term is however trickier, as we have the max operator which prevents us from directly applying Proposition~\ref{prop:primal-convergence-stochastic-dual-subgradient}. We circumvent this next. We have
    \begin{align*}
        \frac{1}{\n} \sum_{i=1}^\n h_i(\bar{x}_{i, T}) - d(\bar{\lambda}_T) &= \underbrace{\frac{1}{\n} \sum_{i=1}^\n h_i(\bar{x}_{i, T}) + (\lambda^*)^\top(A_i \bar{x}_{i, T} - b)}_{\geq d(\lambda^*)} - \frac{1}{\n} \sum_{i=1}^\n (\lambda^*)^\top( A_i \bar{x}_{i, T} - b) -  d(\bar{\lambda}_T) \\
        & \geq d^* - d(\bar{\lambda}_T) - (\lambda^*)^\top \ProjPos{\frac{1}{\n} \sum_{i=1}^\n A_i\bar{x}_{i, T} - b} \tag{since $\lambda^* \geq 0$} \\
        &\geq - (\lambda^*)^\top \ProjPos{\frac{1}{\n} \sum_{i=1}^\n A_i\bar{x}_{i, T} - b}.
    \end{align*}
     Now, for any $a, c \in \R$ for which $a \geq c$, it holds that $\max(a, 0) \leq a + \abs{c}$ (see Lemma~\ref{lem:max-inequality} for a proof). Therefore,
    \begin{align*}
        \max( \frac{1}{\n} \sum_{i=1}^\n h_i(\bar{x}_{i, T}) - d(\bar{\lambda}_T), 0) &\leq \frac{1}{\n} \sum_{i=1}^\n h_i(\bar{x}_{i, T}) - d(\bar{\lambda}_T) + \abs{(\lambda^*)^\top \ProjPos{\frac{1}{\n} \sum_{i=1}^\n A_i\bar{x}_{i, T} - b}} \\
        &\leq \frac{1}{\n} \sum_{i=1}^\n h_i(\bar{x}_{i, T}) - d(\bar{\lambda}_T) + \norm{\lambda}^* \norm{\frac{1}{\n} \sum_{i=1}^\n A_i\bar{x}_{i, T} - b}_+,
    \end{align*}
    by the Cauchy-Schwarz inequality. Going back to~\eqref{eq:proof-2-stage-convergence-1}, we get
    \begin{align*}
        \Eone [\sqrt{F((\beta^0, z^0))}] \leq& \ \frac{1}{\sqrt{2}} \left( \Eone \left[\frac{1}{\n} \sum_{i=1}^\n h_i(\bar{x}_{i, T}) - d(\bar{\lambda}_T) +  \norm{\lambda^*} \norm{\frac{1}{\n} \sum_{i=1}^\n A_i \bar{x}_{i,T} - b}_+\right] + \Eone \norm{\frac{1}{\n} \sum_{i=1}^\n A_i \bar{x}_{i,T} - b}_+\right) \\
        =&\ \frac{1}{\sqrt{2}} \Eone \left[\frac{1}{\n} \sum_{i=1}^\n h_i(\bar{x}_{i, T}) -d^*\right] +  \frac{1}{\sqrt{2}} \Eone[d^* - d(\bar{\lambda}_T)] +\frac{\norm{\lambda^*} + 1}{\sqrt{2}} \Eone \norm{\frac{1}{\n} \sum_{i=1}^\n A_i \bar{x}_{i,T} - b}_+ \\
        \leq &\ \frac{1}{\sqrt{2}} \left( H \sqrt{\frac{N-1}{T}} + \frac{\alphabar \Gtilde }{2 \sqrt{{T}}}  + \frac{\n \Gtilde( 2\norm{\lambda^*} + \alphabar)}{T}\right) + \frac{1}{\sqrt{2}} \left( \frac{\Gtilde \norm{\lambda^*}^2}{2\alphabar \sqrt{T}} + \frac{\Gtilde \alphabar}{2 \sqrt{T}}\right)  \\ &+ \frac{\norm{\lambda^*} + 1}{\sqrt{2}}  \left( \Gtilde \sqrt{\frac{N-1}{T}} + \frac{ 2\Gtilde \norm{\lambda^*}} {\alphabar\sqrt{T}} + \frac{\Gtilde}{\sqrt{T}}  + \frac{\n \Gtilde }{T} \right),
    \end{align*}
    by applying Proposition~\ref{prop:dual-convergence-stochastic-dual-subgradient} and Proposition~\ref{prop:primal-convergence-stochastic-dual-subgradient}. Putting everything together,
    \begin{align*}
        \Eone [\sqrt{F((\beta^0, z^0))}] \leq &\  \sqrt{\frac{N-1}{T}} \left( \frac{H + (\norm{\lambda^*} + 1) \Gtilde}{\sqrt{2}}\right) \\  &+ \frac{\Gtilde}{\sqrt{T}}\left( \frac{\alphabar}{2\sqrt{2}} + \frac{\norm{\lambda^*}^2}{2\sqrt{2} \alphabar} + \frac{\alphabar}{2\sqrt{2}}+  \frac{\norm{\lambda^*} + 1}{\sqrt{2}}\left(1 + \frac{2\norm{\lambda^*}}{\alphabar}\right)\right) \\
        &+ \frac{\n \Gtilde}{T} \left(\frac{3\norm{\lambda^*} + 1 + \alphabar }{\sqrt{2}}\right).
    \end{align*}
\end{proof}

We are then ready to state the full complexity of our algorithm, which is the main result of this work.

\begin{thm}
\label{thm:2-stage-algorithm-complexity}
    Let $\epsilon > 0$. Suppose Assumption~\ref{ass:existence-dual-maximizer} holds and consider Algorithm~\ref{alg:2-stage-algorithm} with $T = K$. In order to for the output $\hat{x}^K$ to be an $\epsilon$-accurate primal solution in expectation, the total number of calls to oracle~\eqref{eq:oracle-definition} is bounded above by
    \begin{align*}
        O\left( \frac{1}{\epsilon^2} + \frac{\n}{\epsilon^{2/3}}\right).
    \end{align*}
\end{thm}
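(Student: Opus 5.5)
The plan is to start from Theorem~\ref{thm:convergence-2-stage-algorithm}, set $T=K$, and determine how large $K$ must be to make $\ErrTK(K,K)\le\epsilon$. Once $K$ is fixed, I count oracle calls. Algorithm~\ref{alg:stochastic-dual-subgradient-separable-primal} uses $T-1$ calls in its loop and $\n$ in its last step. Algorithm~\ref{alg:BCFW} uses one call per iteration, so $K$ calls, plus $\n$ evaluations for the initialization. The total is therefore $O(T+K+\n)=O(K+\n)$. Throughout, all constants $P_1,\dots,P_5$ are treated as $O(1)$, as in the earlier complexity propositions.

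With $T=K$, the error reads
\begin{align*}
\ErrTK(K,K) = \frac{P_1+P_2}{\sqrt{K}} + \frac{2P_3\,\n\sqrt{\n-1}}{K^{3/2}} + \frac{2P_4\,\n}{K^{3/2}} + \frac{2P_5\,\n^2}{K^2}.
\end{align*}
I would make each of the four terms at most $\epsilon/4$.

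\begin{itemize}
\item The first term requires $K\gtrsim 1/\epsilon^2$.
\item The dominant $\n$-dependent term is $\n^{3/2}/K^{3/2}\lesssim\epsilon$, which gives $K\gtrsim \n/\epsilon^{2/3}$.
\item The third term, $\n/K^{3/2}$, is dominated by the second since $\n\le \n^{3/2}$ up to constants. This holds for $\n\ge 2$; the case $\n=1$ is trivial.
\item The last term requires $K\gtrsim \n/\epsilon^{1/2}$. For $\epsilon\le 1$ this is dominated by $\n/\epsilon^{2/3}$. For $\epsilon>1$ every requirement is at most $O(\n)$, which is absorbed anyway.
\end{itemize}

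Hence taking $K=T=C\max\big(1/\epsilon^2,\ \n/\epsilon^{2/3}\big)$ for a suitable constant $C$ suffices. Theorem~\ref{thm:convergence-2-stage-algorithm} then gives both the expected suboptimality and the expected infeasibility bounded by $\epsilon$, so $\hat{x}^K$ is an $\epsilon$-accurate primal solution in expectation. Feasibility of $\hat{x}_i^K\in\domhi$ is already guaranteed by convexity of $\domhi$.

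The total oracle count is $O\big(1/\epsilon^2+\n/\epsilon^{2/3}+\n\big)$. The last step is to absorb the additive $\n$ into $\n/\epsilon^{2/3}$, which is valid for $\epsilon$ bounded above, say $\epsilon\le 1$, the regime of interest. For larger $\epsilon$ one can run the algorithm with the parameters for $\epsilon=1$. The main point needing care is exactly this bookkeeping: the $\n^2/K^2$ and additive $\n$ terms must not dominate. There is no conceptual obstacle.
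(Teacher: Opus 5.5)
Your proposal is correct and follows essentially the same route as the paper: set $T=K$ in $\ErrTK(T,K)$ from Theorem~\ref{thm:convergence-2-stage-algorithm}, read off $K = O(1/\epsilon^2 + \n/\epsilon^{2/3} + \n/\epsilon^{1/2})$, and count $O(T+K+\n)$ oracle calls. Your explicit restriction to $\epsilon \le 1$ makes precise an assumption the paper leaves implicit; that restriction is needed both to drop the $\n/\epsilon^{1/2}$ term and to absorb the additive $\n$ into $\n/\epsilon^{2/3}$.
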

\begin{proof}
    From Theorem~\ref{thm:convergence-2-stage-algorithm}, we have
    \begin{align*}
    &\E[\frac{1}{\n}\sum_{i=1}^\n h_i(\hat{x}_i^K) - d^*] \leq \ErrTK(T, K)\ \  \text{ and }\ \  \E\norm{\frac{1}{\n} \sum_{i=1}^\n A_i \hat{x}_i^K - b }_+\leq \ErrTK(T, K),
    \end{align*}
    where $\ErrTK(T, K)$ is such that
    \begin{align*}
        \ErrTK(T, K) &= O\left(  \frac{1}{\sqrt{T}} + \frac{1}{\sqrt{K}} + \frac{\n}{K} \left( \sqrt{\frac{N}{T}} + \frac{1}{\sqrt{T}} + \frac{\n }{T} \right) \right) \\
        &= O\left(\frac{1}{\sqrt{T}} + \frac{N^{3/2}}{T^{3/2}}  + \frac{\n^2 }{T^2}\right). \tag{since $T=K$}
        \end{align*}
        To get an $\epsilon$-accurate primal solution, we need $R(T, K) \leq \epsilon$, i.e.,
        \begin{align*}
            T = K = O\left( \frac{1}{\epsilon^2} + \frac{\n }{\epsilon^{2/3}} + \frac{\n }{\epsilon^{1/2}}\right) = O\left( \frac{1}{\epsilon^2} + \frac{\n }{\epsilon^{2/3}}\right).
        \end{align*}
        Since each iteration of Algorithm~\ref{sec:stochastic-dual-subgradient} requires one call to~\eqref{eq:oracle-definition}, except for the last one which requires $\n$, and each iteration of Algorithm~\ref{alg:BCFW} requires one call to~\eqref{eq:oracle-definition}, we get that the complexity in terms of number of calls to~\eqref{eq:oracle-definition} is
        \begin{align*}
            O\left( \n + \frac{1}{\epsilon^2} + \frac{\n }{\epsilon^{2/3}}\right) = O\left( \frac{1}{\epsilon^2} + \frac{\n }{\epsilon^{2/3}}\right).
        \end{align*}
\end{proof}

Recall that the complexity bound to obtain $\epsilon$-accurate primal solutions of the deterministic dual subgradient from Theorem~\ref{prop:epsilon-complexity-dual-subgradient} was $O(\n/\epsilon^2)$. Comparing with the result above, we see that our algorithm allows to no longer pay a factor $\n$ in the $\frac{1}{\epsilon^2}$ term of the complexity. Instead we only pay the factor $\n$ scaled by $\frac{1}{\epsilon^{2/3}}$ which, for small values of $\epsilon$, is much smaller than $\frac{1}{\epsilon^2}$.

\section{The nonconvex case}
\label{sec:nonconvex}

We now turn our attention to separable problems of the form
\begin{align}
\tag{NC-P}
    \begin{split}
    \mbox{minimize} & \ \frac{1}{\n} \sum_{i=1}^\n f_i(x_i)\\
    \mbox{subject to }& \ \frac{1}{\n}\sum_{i=1}^\n A_ix_i  \leq  b,\\
    &x_i \in \domfi, \ i=1, \dots, \n,
    \end{split}
    \label{eq:nonconvex-primal}
\end{align}
in the variables $x_i \in \R^{d_i}$. We make the following assumptions on the functions $f_i$.
\begin{assumption}
For all $i \in \{1, \dots, \n\}$, the function $f_i$ is closed, proper and has an affine minorant. Moreover, its domain $\domfi$ is compact.
\end{assumption}
Note that we make no assumptions on the convexity of the functions $f_i$ or their domains. In this section we will be particularly interested in cases where $m$, the dimension of the coupling constraints, is much smaller than $\n$, the number of agents. This is relevant, for example, in energy management problems~\cite{bertsekas1983optimal,vujanic2016decomposition}, supply chain optimization~\cite{vujanic2014large} and portfolio optimization~\cite{baumann2013portfolio}.

In this case, the bidual of~\eqref{eq:nonconvex-primal} is given by (see~\cite{dubois2025frank})
\begin{align}
\tag{NC-BD}
    \begin{split}
    \mbox{minimize} & \ \frac{1}{\n} \sum_{i=1}^\n f_i^{**}(x_i)\\
    \mbox{subject to }& \ \frac{1}{\n}\sum_{i=1}^\n A_ix_i  \leq  b,\\
    &x_i \in \conv(\domfi), \ i=1, \dots, \n,
    \end{split}
    \label{eq:nonconvex-bidual}
\end{align}

The functions $f_i^{**}$ are closed, proper and convex, and their domain $\conv(\domfi)$ are convex compact, so that bidual~\eqref{eq:nonconvex-bidual} directly fits within the setting of~\eqref{eq:primal} with $h_i = f_i^{**}$ and $\domhi = \conv(\domfi)$. Moreover, the dual problem of~\eqref{eq:nonconvex-bidual} reads
\begin{align}
\tag{NC-D}
    \begin{split}
        \mbox{maximize} & \ \left\{ d(\lambda) := -\lambda^\top b + \frac{1}{\n} \sum_{i=1}^\n \inf_{x_i \in \conv(\domfi)} \{ f_i^{**}(x_i) + \lambda^\top A_i x_i\}\right\}\\
        \mbox{subject to} &\ \lambda \geq 0.
    \end{split}
    \label{eq:nonconvex-dual}
\end{align}
Just like in the convex case, we makes the general assumption that the dual has a solution and that strong duality holds between the dual and the bidual.

\begin{assumption}
\label{ass:nonconvex-existence-dual-maximizer}
    There exists $\lambda^* \in \R^m_+$ such that $\lambda^* \in \argmax_{\lambda \in \R^m_+ } d(\lambda)$. We write $d^* = d(\lambda^*)$. Moreover, strong duality holds between \eqref{eq:nonconvex-dual} and \eqref{eq:nonconvex-bidual}, i.e., $d^*$ is also the optimal value of the bidual problem~\eqref{eq:nonconvex-bidual}.
\end{assumption}

This assumption will allow us to directly apply our two-stage algorithm to the bidual problem~\eqref{eq:nonconvex-bidual}. Before doing so, we provide some background on nonconvex duality gap estimation in the next subsection.

\subsection{Background and recent results on nonconvexity and duality gap estimation}

We start with the notion of nonconvexity of the functions~$f_i$.
\begin{definition}
\label{def:function-nonconvexity}
    The nonconvexity of the function $f_i : \R^{d_i} \rightarrow \R$ is defined as~\cite{aubin1976estimates}
\begin{align}
\label{eq:function-nonconvexity-definition}
    \rho(f_i) := \sup \left\{ f_i\left( \sum_{j=1}^p \alpha_j x^j \right) - \sum_{j=1}^p \alpha_j f_i(x^j) \mid p \in \N, x^j \in \domfi, \alpha \geq 0, \sum_j \alpha_j = 1 \right\}. 
\end{align}
\end{definition}
From the above definition, we can see that  $\rho(f_i) = 0$ if $f_i$ is convex. Equivalently, the nonconvexity of $f_i$ can be written (see~\cite{bi2016refined} for a proof)
\begin{align*}
    \rho(f_i) = \sup_{x\in \domfibiconjugate} \{f_i(x) - f_i^{**}(x) \}.
\end{align*}


Duality gap bounds for~\eqref{eq:nonconvex-primal} are typically derived using the Shapley-Folkman theorem, which states that the sum of arbitrary sets gets increasingly close to its convex hull. We recall the theorem next, but omit its proof as it is not essential for the current discussion. This theorem, established in private communications between Shapley and Folkman, was first published in~\cite{starr1969quasi}.

\begin{thm}[Shapley-Folkman]
    Let $V_i \subset \R^p$, $i=1,\dots, n$ and $x \in \sum_{i=1}^n \conv(V_i)$. Then 
    \begin{align*}
        x \in \sum_{i\in S} \conv(V_i) + \sum_{i \not \in S} V_i,
    \end{align*}
    for some set $S\subset \{1, \dots, n\}$ such that $|S| \leq p$.
\end{thm}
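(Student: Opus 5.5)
We will prove the Shapley--Folkman theorem by reducing it to a dimension count on a suitably lifted problem, using Carath\'eodory's theorem and a linear-algebra argument on extreme points. First, since $x \in \sum_{i=1}^n \conv(V_i)$, by Carath\'eodory we may write $x = \sum_{i=1}^n y_i$ with each $y_i \in \conv(V_i)$ a finite convex combination $y_i = \sum_{j=1}^{k_i} \mu_{ij} v_{ij}$, $v_{ij} \in V_i$, $\mu_{ij}\ge 0$, $\sum_j \mu_{ij}=1$. This reduces the problem to finite sets $\{v_{ij}\}_j \subset V_i$, so from now on we may assume each $V_i$ is finite and every object is a polytope.

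Next, we will lift to $\R^{p+n}$. For each $i$ and $j$, set $w_{ij} = (v_{ij}, e_i) \in \R^{p}\times\R^n$, with $e_i$ the $i$-th canonical basis vector. We consider the polytope
\begin{align*}
    Q := \left\{ \mu \geq 0 \;\middle|\; \sum_{i,j} \mu_{ij} w_{ij} = (x, \ones) \right\},
\end{align*}
which is nonempty by the previous step and compact since each group of coefficients sums to one. It is described by $p+n$ linear equality constraints together with nonnegativity, so any vertex $\mu$ of $Q$ has at most $p+n$ nonzero coordinates; this is the standard basic-feasible-solution fact, i.e., the columns $w_{ij}$ indexed by the support of a vertex are linearly independent in $\R^{p+n}$. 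We will then pick such a vertex.

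The final step is counting. The equality constraint on the last $n$ coordinates forces each index $i$ to carry at least one nonzero $\mu_{ij}$. Thus the $n$ groups share at most $p+n$ nonzeros with at least one each, so at most $p$ groups contain two or more nonzeros. We let $S$ be the set of these groups, giving $|S|\le p$. For $i\notin S$ exactly one $\mu_{ij}$ is nonzero and equals $1$, so $y_i = v_{ij} \in V_i$, while for $i\in S$ we still have $y_i \in \conv(V_i)$. Reading off the first $p$ coordinates gives $x \in \sum_{i\in S}\conv(V_i) + \sum_{i\notin S} V_i$. The main point to handle carefully is the vertex-support bound, namely that a vertex of $\{\mu\ge0: W\mu = c\}$ has support of size at most $\operatorname{rank} W \le p+n$. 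We will justify it directly: if the columns on the support were linearly dependent, moving $\mu$ along a kernel direction supported there in both signs would stay feasible for small steps, contradicting extremality. Existence of a vertex follows because $Q$ is a nonempty compact polyhedron.
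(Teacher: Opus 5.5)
Your proof is correct. The paper omits a proof of this theorem and only cites Starr, but your argument is the same lifting-and-pigeonhole device the paper itself uses in Section~\ref{sec:nonconvex-caratheodory}: append $e_i$ to each generator, bound the support of a representation of $(x,\ones)$ in $\R^{p+n}$, and count that at most $p$ groups can carry more than one nonzero weight. The only difference is that you obtain the support bound from the vertex (basic feasible solution) argument rather than citing conic Carath\'eodory (Theorem~\ref{thm:conic-caratheodory}), and that argument is the standard proof of conic Carath\'eodory anyway; as a cosmetic fix, define the per-agent points as $\sum_j \mu_{ij} v_{ij}$ using the vertex weights, instead of reusing the original $y_i$.
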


Based on this theorem, duality gap bounds on separable nonconvex problems in the form~\eqref{eq:nonconvex-primal} can be derived. The most widely known result was first published by Aubin and Ekeland in~\cite{aubin1976estimates}. 

\begin{thm}
\label{thm:duality-gap-estimation-convex-domains}
    Let $d^*$ be the optimal dual value and $\rho(f_i)$ the nonconvexity of the functions $f_i$ as defined in~\eqref{eq:function-nonconvexity-definition}. Suppose that $\domfi$ is convex for all $i=1, \dots, \n$. Then there exists $x^* \in \R^d$ such that $x_i^* \in \domfi$ for all $i=1, \dots, \n$ and
    \begin{align*}
        d^* \leq \frac{1}{\n} \sum_{i=1}^\n f_i(x_i^{*}) \leq d^* + \frac{m+1}{\n} \max_i \rho(f_i).
    \end{align*}
\end{thm}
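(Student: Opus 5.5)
The plan is to apply the Shapley--Folkman theorem in $\R^{m+1}$ to the sets
\begin{align*}
V_i := \left\{ \begin{pmatrix} f_i(x_i) \\ A_i x_i \end{pmatrix} \mid x_i \in \domfi \right\}, \quad i=1,\dots,\n.
\end{align*}
By Assumption~\ref{ass:nonconvex-existence-dual-maximizer}, the bidual~\eqref{eq:nonconvex-bidual} has optimal value $d^*$. Its domain is compact and $f_i^{**}$ is closed, so the value is attained at some $\tilde{x}$ with $\frac{1}{\n}\sum_i A_i\tilde{x}_i \leq b$ and $\frac{1}{\n}\sum_i f_i^{**}(\tilde{x}_i) = d^*$.

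\textbf{Step 1: lift the bidual point into $\sum_i \conv(V_i)$.} For compact domain and closed $f_i$ with affine minorant, the epigraph of $f_i^{**}$ is the closed convex hull of $\epi f_i$. By compactness and Carath\'eodory, for each $i$ there are points $x_i^j \in \domfi$ and weights $\alpha_i^j$ with
\begin{align*}
\tilde{x}_i = \sum_j \alpha_i^j x_i^j, \qquad f_i^{**}(\tilde{x}_i) = \sum_j \alpha_i^j f_i(x_i^j).
\end{align*}
This is the technical point I expect to be the main obstacle: one must show that the infimum in the convexification is attained. I would use lower semicontinuity of $f_i$, compactness of $\domfi$, and a limiting argument on at most $d_i+2$ points. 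Consequently $\big(f_i^{**}(\tilde{x}_i), A_i\tilde{x}_i\big) \in \conv(V_i)$ by linearity, and
\begin{align*}
\begin{pmatrix} d^* \\ \frac{1}{\n}\sum_i A_i\tilde{x}_i \end{pmatrix} \in \frac{1}{\n}\sum_i \conv(V_i).
\end{align*}

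\textbf{Step 2: apply Shapley--Folkman.} With $p = m+1$, there exist a set $S$ with $|S|\leq m+1$, points $x_i^* \in \domfi$ for $i\notin S$, and points $(v_i, A_i y_i)$ with $v_i \geq f_i^{**}(y_i)$ in $\conv(V_i)$ for $i \in S$, whose average gives the same vector. For $i \in S$, the point in $\conv(V_i)$ is $\sum_j \mu^j (f_i(w^j), A_i w^j)$. Set $x_i^* := \sum_j \mu^j w^j$. Here convexity of $\domfi$ is used, so that $x_i^* \in \domfi$. Then $A_i x_i^*$ matches the constraint contribution exactly, and by Definition~\ref{def:function-nonconvexity},
\begin{align*}
f_i(x_i^*) \leq \sum_j \mu^j f_i(w^j) + \rho(f_i).
\end{align*}
For $i \notin S$, the contributions coincide exactly with $(f_i(x_i^*), A_i x_i^*)$.

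\textbf{Step 3: conclude.} Summing, the point $x^*$ satisfies $\frac{1}{\n}\sum_i A_i x_i^* = \frac{1}{\n}\sum_i A_i\tilde{x}_i \leq b$, so it is feasible for~\eqref{eq:nonconvex-primal}. Moreover,
\begin{align*}
\frac{1}{\n}\sum_i f_i(x_i^*) \leq d^* + \frac{1}{\n}\sum_{i\in S}\rho(f_i) \leq d^* + \frac{m+1}{\n}\max_i \rho(f_i).
\end{align*}
The lower bound $d^* \leq \frac{1}{\n}\sum_i f_i(x_i^*)$ follows from weak duality. Since $f_i \geq f_i^{**}$ and $x^*$ is feasible for the bidual, its bidual objective is at least $d^*$, the optimal value of~\eqref{eq:nonconvex-bidual}.
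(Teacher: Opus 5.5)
Your proof is correct and follows essentially the standard argument the paper defers to (e.g.\ \cite[Theorem 2.5]{dubois2025frank}): take a bidual minimizer $\tilde{x}$, write each $(f_i^{**}(\tilde{x}_i), A_i\tilde{x}_i)$ as a point of $\conv(V_i)$, apply Shapley--Folkman in $\R^{m+1}$, and average back the at most $m+1$ nontrivial blocks using convexity of $\dom(f_i)$ and the definition of $\rho(f_i)$. This is the same reconstruction the paper carries out algorithmically in Section~\ref{sec:nonconvex-caratheodory}. The step you flag as the main obstacle is in fact immediate from $\epi f_i^{**} = \conv(\epi f_i)$, the fact used in the proof of Proposition~\ref{prop:extreme-points-match}: a representation $(\tilde{x}_i, f_i^{**}(\tilde{x}_i)) = \sum_j p_j (x^j, \beta^j)$ with $\beta^j \geq f_i(x^j)$ forces $\sum_j p_j f_i(x^j) = f_i^{**}(\tilde{x}_i)$, because $f_i^{**}$ is convex and lies below $f_i$.
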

\begin{proof}
    See~\cite{bertsekas2014constrained},~\cite{ekeland1999convex} or~\cite[Theorem 2.5]{dubois2025frank} for a more modern exposition.
\end{proof}

The above theorem tells us that when the number of agents $\n$ increases but the size $m$ of the coupling constraint and the nonconvexity of the functions $f_i$ remain bounded, the duality gap vanishes. When the function domains are not convex, similar bounds also hold based on a more general notion of nonconvexity and under slightly stronger assumptions. We will discuss one such result at the very end of this section, and simply refer to~\cite{bertsekas2014constrained,dubois2025frank} for more on this topic.

The important thing to emphasize at this point is that although the above theorem provides strong duality gap estimations for~\eqref{eq:nonconvex-primal} when $\n$ is much greater than $m$, it does not provide a constructive way to get a primal solution achieving those bounds. Constructive approaches for doing so have been studied, notably in~\cite{udell2016bounding} and~\cite{dubois2025frank}. In~\cite{udell2016bounding}, the authors suggest a randomized approach to solve~\eqref{eq:nonconvex-primal}, and achieve the bounds from Theorem~\ref{thm:duality-gap-estimation-convex-domains} with probability 1. Their approach however requires optimizing over the full solution of the bidual problem~\eqref{eq:nonconvex-bidual}, which limits the application of the method in practical settings.

The approach we develop in this section is closer to the approach from~\cite{dubois2025frank}, with a few important differences. While in~\cite{dubois2025frank}, the authors assume access to the optimal dual value $d^*$, our first stage approximates it using the stochastic dual subgradient method. In the second stage, in~\cite{dubois2025frank} the authors define the sets $C_i$ and the function~$F$ just like we do in~\eqref{eq:definition-Ci-sets} and~\eqref{eq:F-definition}. They then optimize $F$ using the Frank-Wolfe algorithm. Using properties of the oracle~\eqref{eq:oracle-definition} (which we will explore next), they show that the output of the Frank-Wolfe algorithm is a conic combination of primal points which approximates the optimal dual value. Finally, they build a primal solution recovering the duality gap bounds of Theorem~\ref{thm:duality-gap-estimation-convex-domains} by computing a Carath\'eodory decomposition of the output of the Frank-Wolfe algorithm. Our overall approach is similar, except that, as we saw previously, we instead run the block-coordinate version of Frank-Wolfe, which dramatically improves the overall complexity for large $\n$. In the end, our approach will yield similar bounds as in~\cite{dubois2025frank} but (i) without assuming knowledge of $d^*$ and (ii) by reducing the overall complexity on $\n$.

The rest of this section is dedicated to applying our framework to~\eqref{eq:nonconvex-bidual} and obtaining solutions of~\eqref{eq:nonconvex-primal}. In Section~\ref{sec:oracle-nonconvex} we develop the first crucial element of our method, where we show that we can assume that the oracle~\eqref{eq:oracle-definition} for~\eqref{eq:nonconvex-bidual} returns a point in $\domfi$ for which the functions $f_i$ and $f_i^{**}$ match. In Section~\ref{sec:nonconvex-stochastic-dual-subgradient} we discuss how we can still apply the stochastic dual subgradient to obtain an approximation of $d^*$. In Section~\ref{sec:nonconvex-bcfw} we discuss the application of the block coordinate Frank-Wolfe algorithm, and how the properties of the oracle allow the output to be a convex combination of primal points for each $i=1,\dots, \n$. We then show how to make this convex combination trivial for at least $\n - m$ indices by using Carath\'eodory decomposition algorithms.

\subsection{Oracle and extreme points}
\label{sec:oracle-nonconvex}

In the case of~\eqref{eq:nonconvex-bidual}, the oracle~\eqref{eq:oracle-definition} reads,
\begin{align}
\tag{NC-O1}
\label{eq:nonconvex-oracle}
    x_i^*((\gamma, \lambda)) \in \argmin_{x_i \in \conv(\domfi)} \{ \gamma f_i^{**}(x_i) + \lambda^\top A_i x_i\}.
\end{align}
The important aspect of our approach in this section is that we can actually restrict the oracle to the original nonconvex functions. This result was proved in~\cite[Lemma 3.1]{dubois2025frank}, and we provide the proof for completeness in Appendix~\ref{app:nonconvex-results}.

\begin{restatable}{prop}
{ExtremePointsMatching}
\label{prop:extreme-points-match}
Given $\gamma \geq 0, \lambda \in \R^m$, consider the oracle
\begin{align}
\label{eq:oracle_nc}
\tilde{x}_i \in \argmin_{x_i \in  \domfi} \{\gamma f_i(x_i) + \lambda^\top A_i x_i\}.
\end{align}
Assume that, if $\gamma = 0$, the above oracle returns an extreme point of $\dom (f_i)$.
Then $\tilde{x}_i$ is a solution to oracle~\eqref{eq:nonconvex-oracle} and crucially, $f_i(\tilde{x}_i) = f_i^{**}(\tilde{x}_i)$.
\end{restatable}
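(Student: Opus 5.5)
We split into the cases $\gamma>0$ and $\gamma=0$. In both, the main tool is the standard fact that a linear perturbation commutes with convexification. Since $f_i$ is closed, proper, with an affine minorant and compact domain, $f_i^{**}$ is the closed convex envelope of $f_i$. Its epigraph is the closure of $\conv(\epi f_i)$, and compactness of $\domfi$ together with lower semicontinuity lets us drop the closure. Hence for every $x\in\conv(\domfi)$ we can write
\begin{align*}
f_i^{**}(x) = \inf\Big\{ \textstyle\sum_j \alpha_j f_i(x^j) \,:\, x=\sum_j \alpha_j x^j,\ x^j\in\domfi,\ \alpha\geq 0,\ \sum_j\alpha_j=1\Big\}.
\end{align*}
If the direct closure argument is delicate, I will instead use Carath\'eodory to bound the number of points by $d_i+2$, and compactness to show the infimum is attained.

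\textbf{Case $\gamma>0$.} Set $\phi(x)=\gamma f_i(x)+\lambda^\top A_i x$. The function $\gamma f_i^{**}+\lambda^\top A_i\cdot$ is the convex envelope of $\phi$, because adding a linear function commutes with taking the biconjugate. Minimizing a function and minimizing its convex envelope over $\conv(\domfi)$ give the same optimal value. One can also see this directly from the representation above, since $\sum_j\alpha_j\phi(x^j)\ge \min_{\domfi}\phi$. Therefore $\min_{\conv(\domfi)}\{\gamma f_i^{**}+\lambda^\top A_i x\} = \min_{\domfi}\phi = \phi(\tilde x_i)$. Since $f_i^{**}\le f_i$, we get
\begin{align*}
\gamma f_i^{**}(\tilde x_i)+\lambda^\top A_i\tilde x_i \le \phi(\tilde x_i) = \min_{\conv(\domfi)}\{\gamma f_i^{**}+\lambda^\top A_i\cdot\} \le \gamma f_i^{**}(\tilde x_i)+\lambda^\top A_i\tilde x_i.
\end{align*}
This forces equality throughout. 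So $\tilde x_i$ solves \eqref{eq:nonconvex-oracle}, and dividing by $\gamma>0$ gives $f_i(\tilde x_i)=f_i^{**}(\tilde x_i)$.

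\textbf{Case $\gamma=0$.} Here the oracle is a linear minimization. A linear function has the same minimum over $\domfi$ as over its convex hull, so $\tilde x_i$ solves \eqref{eq:nonconvex-oracle} trivially. For the equality $f_i=f_i^{**}$ at $\tilde x_i$ we use the extreme-point assumption. An extreme point $\tilde x_i$ of $\conv(\domfi)$ admits only the trivial convex decomposition $\tilde x_i=\sum_j\alpha_j x^j$ with $x^j\in\domfi$, namely all $x^j$ with $\alpha_j>0$ equal to $\tilde x_i$. The representation formula then gives $f_i^{**}(\tilde x_i)=f_i(\tilde x_i)$.

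The assumption as stated says ``extreme point of $\dom(f_i)$''. For a possibly nonconvex set, I will need to interpret this as an extreme point of $\conv(\domfi)$, or check that the two notions agree. The points of a compact set that are not in the convex hull of its other points are exactly the extreme points of its hull.

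\textbf{Main obstacle.} The main obstacle is rigorously justifying the representation of $f_i^{**}$ as the convex envelope with an attained (or exact) infimum on $\conv(\domfi)$. In particular, the closure step requires lower semicontinuity of the convex hull function, which I will obtain from compactness of $\domfi$ and a Carath\'eodory/compactness argument.
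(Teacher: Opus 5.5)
Your proposal is correct and follows essentially the paper's proof. For $\gamma=0$ the paper also writes $(\tilde x_i, f_i^{**}(\tilde x_i))$ as a convex combination of points of $\epi f_i$, using $\epi f_i^{**}=\conv(\epi f_i)$, which it cites from Hiriart-Urruty--Lemar\'echal rather than proving via Carath\'eodory and compactness as you plan. For $\gamma>0$ it reduces to $\gamma=1$ and uses $(f_i+\lambda^\top A_i)^{**}=f_i^{**}+\lambda^\top A_i$, the bound $\inf (f_i+\lambda^\top A_i)^{**}\ge -(f_i+\lambda^\top A_i)^*(0)=\min_{\domfi}(f_i+\lambda^\top A_i)$, and the same sandwich with $f_i^{**}\le f_i$; your reading of the hypothesis as extremality in $\conv(\domfi)$ is exactly what the paper's argument implicitly needs, since for a nonconvex set the two-point notion of extreme point is strictly weaker and the two notions need not agree.
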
 

We therefore assume from now on that oracle~\eqref{eq:nonconvex-oracle} returns $x_i^*((\gamma, \lambda)) \in \domfi$ such that $f_i(x_i^*((\gamma, \lambda))) = f_i^{**}(x_i^*((\gamma, \lambda)))$.

We are then ready to apply the previously derived two-stage method to the the bidual~\eqref{eq:nonconvex-bidual} of our nonconvex problem.

\subsection{Stochastic dual subgradient}
\label{sec:nonconvex-stochastic-dual-subgradient}

In the first stage, we approximate $d^*$, the optimal value of the dual $d$ as defined in~\eqref{eq:nonconvex-dual} via the stochastic dual subgradient algorithm (Algorithm~\ref{alg:stochastic-dual-subgradient-separable-primal}). At the end of this stage, the approximation $d(\bar{\lambda}_T)$ of the optimal dual value satisfies, by Proposition~\eqref{prop:dual-convergence-stochastic-dual-subgradient},
\begin{align*}
    \E[d^* - d(\bar{\lambda}_T)] \leq \frac{\Gtilde \norm{ \lambda^*}^2}{2 \alphabar\sqrt{T}} + \frac{\Gtilde \alphabar}{2\sqrt{T}}.
\end{align*}

\subsection{Second stage: block-coordinate Frank-Wolfe}
\label{sec:nonconvex-bcfw}

The second stage consists, as before, in running the BCFW algorithm (Algorithm~\ref{alg:BCFW}). As input, it requires $d(\bar{\lambda}_T)$ and some $(\beta_i^0, z_i^0)$ for each $i=1,\dots, \n$. In the convex case, we used $\beta_i^0 = \frac{1}{\n}h_i(\bar{x}_{i, T})$,
where $\bar{x}_{i, T}$ was the output of Algorithm~\ref{alg:stochastic-dual-subgradient-separable-primal}, i.e.,
\begin{align*}
    \bar{x}_{i, T} = \frac{1}{I_i + 1} \left( x_i^*(\lambda_{T-1}) +  \sum_{t\in \{0, \dots, T-2\}, \atop i_t = i} x_i^*(\lambda_t)\right), \ i=1,\dots, \n. 
\end{align*}
This would lead to convergence guarantees on the bidual problem similar to Proposition~\ref{prop:primal-convergence-stochastic-dual-subgradient}, by convexity of the functions $f_i^{**}$. However, we are looking for bounds on the primal problem, and thus must resort to something else to account for the nonconvexity of the functions $f_i$. We therefore suggest setting
\begin{align*}
\beta_i^0 = \frac{1}{I_i + 1} \left( \frac{1}{\n}f_i^{**}\left(x_i^*(\lambda_{T-1})\right) +  \sum_{t\in \{0, \dots, T-2\}, \atop i_t = i} \frac{1}{\n}f_i^{**}\left( x_i^*(\lambda_t)\right)\right), \ i=1,\dots, \n.
\end{align*}
Recalling that the $x_i^*(\lambda_t)$ are output of our oracle (namely equation~\eqref{eq:stochastic-dual-subgradient-oracle}), by the discussion above and Proposition~\ref{prop:extreme-points-match} we actually have
\begin{align}
\label{eq:beta0-nonconvex-definition}
    \beta_i^0 = \frac{1}{I_i + 1} \left( \frac{1}{\n}f_i\left(x_i^*(\lambda_{T-1})\right) +  \sum_{t\in \{0, \dots, T-2\}, \atop i_t = i} \frac{1}{\n}f_i\left( x_i^*(\lambda_t)\right)\right), \ i=1,\dots, \n.
\end{align}
For $z_i^0$, we do not have to worry with such considerations and can simply set
\begin{align}
\label{eq:z0-nonconvex-definition}
    z_i^0 = \frac{1}{\n} A_i \bar{x}_{i, T}, \ i=1,\dots, \n.
\end{align}
 We can then proceed to run Algorithm~\ref{alg:BCFW} with this initialization. The algorithm outputs $(\beta^K, z^K)$ for which, by Theorem~\ref{thm:convergence-bcfw-specified}, it holds that
\begin{align}
\label{eq:nonconvex-betaK-zK-rate-randomness-alg2}
\begin{split}
    &\Etwo[\beta^K -d^* ] \leq (d^* - d(\bar{\lambda}_T)) + \frac{2\SqrtAvgDiamSquared}{\sqrt{K }} + \frac{2\n}{K} \sqrt{F((\beta^0, z^0)) - F^*},\\
        &\Etwo\norm{z^K - b }_+ \leq (d^* - d(\bar{\lambda}_T)) + \frac{2\SqrtAvgDiamSquared}{\sqrt{K }} + \frac{2\n}{K} \sqrt{F((\beta^0, z^0)) - F^*}.
    \end{split}
\end{align}
In the next proposition, we take the expectation with respect to the randomness induced by the first algorithm to get a final rate on $(\beta^K, z^K)$. Note that the bound is exactly the same as in the convex case in Theorem~\ref{thm:convergence-2-stage-algorithm}. The proof is also very similar, the only important difference being that we must be extra careful with the initialization scheme \eqref{eq:beta0-nonconvex-definition} for $\beta^0$, which is different than in the convex section. We defer the proof to appendix~\ref{app:nonconvex-results} to ease the flow of the section.
\begin{restatable}{prop}{PropNonConvexTwoStageConvergenceBetaZ}
\label{prop:nonconvex-two-stage-convergence-beta-z}
    Consider the nonconvex problem~\ref{eq:nonconvex-primal}. Suppose we run Algorithm~\ref{alg:stochastic-dual-subgradient-separable-primal} for $T$ iterations with input $\lambda_0 = 0_m$ and constant stepsize $\alpha_t = \alpha = \frac{\alphabar}{\Gtilde \sqrt{T}}$, followed by Algorithm~\ref{alg:BCFW} for $K$ iterations with input $(\beta^0, z^0)$ as described in~\eqref{eq:beta0-nonconvex-definition} and~\eqref{eq:z0-nonconvex-definition}. The output $(\beta^K, z^K)$ then satisfies
\begin{align*}
    &\E[\beta^K -d^* ] \leq \ErrTK(T, K),\\
        &\E\norm{z^K - b }_+ \leq \ErrTK(T, K),
\end{align*}
where
\begin{align*}
        \ErrTK(T, K) := P_1 \frac{1}{\sqrt{T}} + P_2 \frac{1}{\sqrt{K}} + \frac{2 \n}{K} \left( P_3 \sqrt{\frac{\n-1}{T}} + P_4 \frac{1}{\sqrt{T}} + P_5 \frac{\n}{T}\right),
    \end{align*}
where $P_1, P_2, P_3, P_4$ and $P_5$ are the constants defined in Theorem~\ref{thm:convergence-2-stage-algorithm}.
\end{restatable}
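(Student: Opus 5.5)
Our plan is to mirror the proof of Theorem~\ref{thm:convergence-2-stage-algorithm}, replacing every use of $h_i(\bar{x}_{i,T})$ by the quantity $\beta^0=\sum_i\beta_i^0$ from~\eqref{eq:beta0-nonconvex-definition}. By the tower property and~\eqref{eq:nonconvex-betaK-zK-rate-randomness-alg2}, which is Theorem~\ref{thm:convergence-bcfw-specified} applied to the bidual with $h_i=f_i^{**}$, it suffices to bound $\Eone[d^*-d(\bar{\lambda}_T)]$ and $\Eone[\sqrt{F((\beta^0,z^0))}]$ (using $F^*\ge 0$). One point must be checked: Theorem~\ref{thm:BCFW-convergence} only requires $(\beta_i^0,z_i^0)\in\frac{1}{\n}\conv(C_i)$. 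This holds because $(\beta_i^0,z_i^0)$ is the average of the points $\frac1\n(f_i^{**}(x_i^*(\lambda_t)),A_ix_i^*(\lambda_t))\in\frac1\n C_i$, where $C_i$ is built from $f_i^{**}$ on $\conv(\domfi)$. The first expectation is bounded directly by Proposition~\ref{prop:dual-convergence-stochastic-dual-subgradient}, which gives the $P_1/\sqrt{T}$ term; the $D$ term gives $P_2/\sqrt K$.

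For the second expectation we split as in~\eqref{eq:proof-2-stage-convergence-1}:
\[
\Eone\sqrt{F((\beta^0,z^0))}\le\tfrac{1}{\sqrt2}\Bigl(\Eone\max(\beta^0-d(\bar{\lambda}_T),0)+\Eone\norm{z^0-b}_+\Bigr).
\]
The infeasibility term is unchanged, since $z^0=\frac1\n\sum_iA_i\bar{x}_{i,T}$. It is therefore bounded exactly as in Proposition~\ref{prop:primal-convergence-stochastic-dual-subgradient}, applied to the bidual (a convex problem of the form~\eqref{eq:primal}).

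The key observation for $\beta^0$ is that $\beta^0$ equals precisely the right-hand side of the Jensen inequality~\eqref{eq:proof-stochastic-dual-subgradient-primal-convergence-important-result} with $h_i=f_i^{**}$. The proof of Proposition~\ref{prop:primal-convergence-stochastic-dual-subgradient} never used $h_i(\bar{x}_{i,T})$ beyond that inequality: all subsequent steps (the split into $\Phi_1,\Phi_2$, Lemma~\ref{lem:concentration-bound-stochastic-dual-subgradient}, the telescoping bound on $\norm{\lambda_t}^2$, and Lemma~\ref{lem:bound-growth-dual-iterates-stochastic-dual-subgradient}) bound that right-hand side. Hence
\[
\Eone[\beta^0]\le d^*+H\sqrt{\tfrac{\n-1}{T}}+\tfrac{\alphabar\Gtilde}{2\sqrt T}+\tfrac{\n\Gtilde(2\norm{\lambda^*}+\alphabar)}{T},
\]
with $H$ taken as the bound on $\abs{f_i^{**}}$ over the relevant sets.

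To handle the max we need the lower bound $\beta^0-d(\bar{\lambda}_T)\ge-(\lambda^*)^\top\ProjPos{z^0-b}$. By convexity of $f_i^{**}$ we have $\beta^0\ge\frac1\n\sum_if_i^{**}(\bar{x}_{i,T})$, and the Lagrangian argument of Theorem~\ref{thm:convergence-2-stage-algorithm} then applies verbatim to the bidual: the Lagrangian at $\lambda^*$ is at least $d^*$, and $d^*\ge d(\bar{\lambda}_T)$. Lemma~\ref{lem:max-inequality} and Cauchy--Schwarz then give $\max(\beta^0-d(\bar{\lambda}_T),0)\le\beta^0-d(\bar{\lambda}_T)+\norm{\lambda^*}\norm{z^0-b}_+$.

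Substituting the three bounds and collecting coefficients reproduces $P_3,P_4,P_5$ exactly as in the convex case. The main step needing care is confirming that the proof of Proposition~\ref{prop:primal-convergence-stochastic-dual-subgradient} indeed bounds the post-Jensen quantity, so that it transfers to $\beta^0$ without modification; everything else is bookkeeping identical to the convex proof.
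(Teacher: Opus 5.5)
Your proposal is correct and follows the paper's proof essentially step by step. The shared steps are the tower property applied to the BCFW bound, Proposition~\ref{prop:dual-convergence-stochastic-dual-subgradient} for $\Eone[d^*-d(\bar{\lambda}_T)]$, the same split of $\sqrt{F((\beta^0,z^0))}$, and the observation that $\sum_i\beta_i^0$ is exactly the post-Jensen quantity bounded in the proof of Proposition~\ref{prop:primal-convergence-stochastic-dual-subgradient}. The only, inessential, deviation is in the lower bound $\beta^0-d(\bar{\lambda}_T)\ge-(\lambda^*)^\top\ProjPos{z^0-b}$: you obtain it via convexity of $f_i^{**}$ at $\bar{x}_{i,T}$, whereas the paper writes $\beta^0$ as an average over the weights $\nu_{i,t}$ and applies the Lagrangian bound at each oracle point. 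Your explicit check that $(\beta_i^0,z_i^0)\in\frac{1}{\n}\conv(C_i)$ is a detail the paper leaves implicit.
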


\subsection{Characterization of the BCFW output}

As in the convex case, the output can be written as a simple convex combination. Yet, because of the initialization we chose above (equation~\eqref{eq:beta0-nonconvex-definition}), it has a slightly different structure, which we summarize in the next lemma.

\begin{lem}
    The output $(\beta^K, z^K)$ can be written as 
    \begin{align}
    \label{eq:nonconvex-betaK-zK-output}
        \begin{pmatrix}
            \beta^K \\ z^K
        \end{pmatrix} = \sum_{i=1}^\n \sum_{l \in Q_i}  \omega_i^l \frac{1}{\n}\begin{pmatrix}
            f_i(x_i^l) \\ A_i x_i^l
        \end{pmatrix},
    \end{align}
    where, for each $i=1, \dots, \n$, $Q_i$ is an appropriately defined finite set, and it holds that $\omega_i \geq 0$, $\sum_{l \in Q_i} \omega_i^l = 1$ and $x_i^l \in \dom (f_i)$ for all $l \in Q_i$.
\end{lem}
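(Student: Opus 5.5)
The plan is to track, for each index $i$, how the pair $(\beta_i^k, z_i^k)$ evolves through Algorithm~\ref{alg:BCFW}. At each step it is either unchanged or replaced by a convex combination of itself and one new point. I will maintain the representation by induction on $k$. Since $\beta^K=\sum_i\beta_i^K$ and $z^K=\sum_i z_i^K$, it suffices to show that each block $(\beta_i^K, z_i^K)$ is of the form $\sum_{l\in Q_i}\omega_i^l\frac{1}{\n}(f_i(x_i^l), A_ix_i^l)$ with convex weights and $x_i^l\in\dom(f_i)$.

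\textbf{Base case ($k=0$).} Let $Q_i^0$ index the oracle outputs collected for agent $i$ in Algorithm~\ref{alg:stochastic-dual-subgradient-separable-primal}. These are the points $x_i^*(\lambda_{T-1})$ and $x_i^*(\lambda_t)$ for $t\le T-2$ with $i_t=i$, so $|Q_i^0|=I_i+1$. Give each of them weight $\frac{1}{I_i+1}$.

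The formula~\eqref{eq:beta0-nonconvex-definition} is then exactly $\beta_i^0=\sum_{l\in Q_i^0}\omega_i^l\frac1\n f_i(x_i^l)$. Note that it is written with $f_i$ rather than $f_i^{**}$, thanks to Proposition~\ref{prop:extreme-points-match}. By linearity, \eqref{eq:z0-nonconvex-definition} gives $z_i^0=\frac1\n A_i\bar x_{i,T}=\sum_{l\in Q_i^0}\omega_i^l\frac1\n A_ix_i^l$. These points lie in $\dom(f_i)$ because we assumed the oracle~\eqref{eq:nonconvex-oracle} returns points of $\domfi$.

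This base case is the one place where care is needed. In the nonconvex setting $\bar x_{i,T}$ itself need not lie in $\domfi$, and $\beta_i^0$ is not $\frac1\n f_i(\bar x_{i,T})$. For that reason the representation must keep all the individual oracle points rather than their average.

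\textbf{Inductive step.} Suppose the representation holds at iteration $k$ with index sets $Q_i^k$. If $i\ne i_k$, set $Q_i^{k+1}=Q_i^k$ with unchanged weights. If $i=i_k$, add a new index for $x_{i}^{k+1}$, which is the output of~\eqref{eq:bcfw-oracle}. Under our standing assumption this oracle call returns a point of $\domfi$ with $f_i=f_i^{**}$ there, so the update~\eqref{eq:BCFW-update-beta-z} using $h_i=f_i^{**}$ equals $\frac1\n(f_i(x_i^{k+1}),A_ix_i^{k+1})$. Multiply the old weights by $1-\rho_k$ and give the new point weight $\rho_k$. Since $\rho_k\in[0,1]$, the new weights are nonnegative and still sum to one.

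After $K$ steps, set $Q_i=Q_i^K$ and sum over $i$ to obtain~\eqref{eq:nonconvex-betaK-zK-output}.

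If an oracle output repeats a point already in $Q_i$, I simply treat it as a new index rather than merging weights. This keeps $Q_i$ finite, with $|Q_i|\le I_i+1+K$, and is harmless for the statement.
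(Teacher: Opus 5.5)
Your proposal is correct and follows essentially the paper's argument. The paper writes $\beta_i^K$ as a convex combination of $\beta_i^0$ and the BCFW oracle values, replaces $f_i^{**}$ by $f_i$ via Proposition~\ref{prop:extreme-points-match}, and expands $\beta_i^0$ from~\eqref{eq:beta0-nonconvex-definition} into the individual stage-1 oracle points; this is your base case and inductive step done as a single unrolling. Your explicit check that $z_i^0=\frac{1}{\n}A_i\bar x_{i,T}$ expands with the same weights and points as $\beta_i^0$ makes precise what the paper leaves as ``the same argument applies to $z^K$''.
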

\begin{proof}
    One can see that
    \begin{align*}
        \beta^K = \sum_{i=1}^\n \beta_i^K \ \text{ where } \ 
        \beta_i^K = \omega_i^{-1} \beta^0_i + \sum_{k, i_k=i} \omega_i^k \frac{1}{\n}f_i^{**}(x_{i_k}^{k+1}),
    \end{align*}
    with $\omega_i \geq 0$, $\omega_i^{-1} + \sum_{k, i_k=i} \omega_i^k = 1$. Using once again the properties of our oracle,
    \begin{align*}
        \beta_i^K = \omega_i^{-1} \beta^0_i + \sum_{k, i_k=i} \omega_i^k \frac{1}{\n}f_i(x_{i_k}^{k+1}).
    \end{align*}
    Writing out the definition of $\beta_i^0$ from~\eqref{eq:beta0-nonconvex-definition}, we get
    \begin{align*}
        \beta_i^K = \omega_i^{-1} \left(\frac{1}{I_i + 1} \left( \frac{1}{\n}f_i\left(x_i^*(\lambda_{T-1})\right) +  \sum_{t\in \{0, \dots, T-2\}, \atop i_t = i} \frac{1}{\n}f_i\left( x_i^*(\lambda_t)\right)\right) \right) + \sum_{k, i_k=i} \omega_i^k \frac{1}{\n}f_i(x_{i_k}^{k+1}).
    \end{align*}
    Up to rewriting and redefining the coefficients, this is a convex combination of elements $\frac{1}{\n} f_i(x_{i}^l)$ for some $x_i^l \in \dom(f_i)$. Thus, defining the appropriate set $Q_i$, we can write
    \begin{align*}
        \beta_i^{K} = \sum_{l \in Q_i} \omega_i^l \frac{1}{\n} f_i(x_{i}^l),
    \end{align*}
    where $\omega_i^l \geq 0$ and $\sum_{l \in Q_i} \omega_i^l = 1$. The same argument applies to $z^K$.
\end{proof}

Let us summarize what we have up to this point. After running Algorithm~\ref{alg:stochastic-dual-subgradient-separable-primal} and~\ref{alg:BCFW}, we get $(\beta^K, z^K)$, which can be written as the sum of convex combinations
\begin{align}
\label{eq:nonconvex-bcfw-betaK-zK-output}
    \begin{pmatrix}
        \beta^K \\ z^K
    \end{pmatrix} = \sum_{i=1}^{\n}\begin{pmatrix}\beta_i^K \\ z_i^K \end{pmatrix} = \sum_{i=1}^\n \sum_{l \in Q_i} \omega_i^l \frac{1}{\n} \begin{pmatrix} f_i(x_i^l) \\ A_i x_i^l \end{pmatrix}.
\end{align}
Moreover, from Proposition~\ref{prop:nonconvex-two-stage-convergence-beta-z}, we have convergence guarantees on $(\beta^K, z^K)$ relating it to optimal dual values. In the convex case this was enough to conclude. In the nonconvex case, it is precisely here that we can no longer blindly follow the approach from the convex section. Indeed, this would lead us to consider the vector $\hat{x}^K$ such that $\hat{x}^K_i = \sum_{l \in Q_i} \omega_i^l x_i^l$, which would be a solution of~\eqref{eq:primal} by convexity of the functions $h_i$. As this is not true for the functions $f_i$, we suggest a fix next. 

\subsection{Carath\'eodory decomposition}
\label{sec:nonconvex-caratheodory}

From~\eqref{eq:nonconvex-bcfw-betaK-zK-output}, observe that we can write
\begin{align}
\label{eq:nonconvex-betaK-zK-output-big}
    \begin{pmatrix}
        \beta^K \\ z^K \\ \ones_\n
    \end{pmatrix}  = \sum_{i=1}^\n \sum_{l \in Q_i} \omega_i^l \begin{pmatrix} \frac{1}{\n}f_i(x_i^l) \\ \frac{1}{\n}A_i x_i^l \\ e_i \end{pmatrix}, 
\end{align}
where $e_i$ is the $i$-th vector in the canonical basis of $\R^\n$. In particular, this is a conic combination of the vectors $\left\{ \begin{pmatrix} \frac{1}{\n}f_i(x_i^l) \\ \frac{1}{\n}A_i x_i^l \\ e_i \end{pmatrix} \mid i=1,\dots,\n, \ l \in Q_i \right\} \subset \R^{1 + m + \n}$. Let us now recall the conic Carath\'eodory theorem.

\begin{thm}[Conic Carath\'eodory]
\label{thm:conic-caratheodory}
    Let $V \in \R^p$ and suppose $v \in \R^p$ is in the conic hull of $V$. Then there exists $\{\mu_j\}_{j=1}^p \subset \R_+$ and $\{v_j\}_{j=1}^p \subset V$ such that
    \begin{align*}
        v = \sum_{j=1}^p \mu_j v_j.
    \end{align*}
\end{thm}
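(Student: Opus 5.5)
The plan is the classical linear-algebra reduction argument: start from an arbitrary finite conic representation and remove vectors one at a time until at most $p$ remain. Since $v$ lies in the conic hull of $V$, there exist finitely many vectors $v_1,\dots,v_q \in V$ and coefficients $\mu_1,\dots,\mu_q > 0$ with $v = \sum_{j=1}^q \mu_j v_j$. Among all such representations I choose one with the smallest $q$. Zero coefficients can always be dropped, so I may assume every $\mu_j$ is strictly positive. If $q \leq p$, I pad with zero coefficients (reusing any element of $V$) and the statement holds. The degenerate case $v=0$ is handled by taking all $\mu_j = 0$.

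Now suppose $q > p$. Then the vectors $v_1,\dots,v_q \in \R^p$ are linearly dependent, so there are scalars $\theta_1,\dots,\theta_q$, not all zero, with $\sum_j \theta_j v_j = 0$. Replacing $\theta$ by $-\theta$ if needed, I may assume at least one $\theta_j$ is positive. I then set
\begin{align*}
\tau := \min\left\{ \frac{\mu_j}{\theta_j} \mid \theta_j > 0 \right\} > 0,
\end{align*}
and define $\mu_j' = \mu_j - \tau \theta_j$. By construction, $\sum_j \mu_j' v_j = v - \tau \cdot 0 = v$. Every $\mu_j' \geq 0$: when $\theta_j \le 0$ this is immediate, and when $\theta_j > 0$ it follows from the choice of $\tau$. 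Moreover, at least one $\mu_j'$ equals zero, namely the one attaining the minimum. Dropping that vector gives a conic representation of $v$ using $q-1$ vectors of $V$, which contradicts the minimality of $q$. Hence $q \le p$.

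The only point that needs care is the sign normalization of $\theta$. It ensures the set over which $\tau$ is minimized is nonempty, so that $\tau$ is well defined and positive. Everything else is routine. Equivalently, one can phrase the argument as an induction that repeats the elimination step until the remaining vectors are linearly independent, at which point there are at most $p$ of them.
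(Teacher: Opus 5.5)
Your proof is correct: the minimal-representation argument, the sign normalization of $\theta$ that makes $\tau$ well defined and positive, and the padding with zero coefficients when $q<p$ are all handled properly. The paper gives no proof of its own and only cites Rockafellar's Corollary 17.1.2. That corollary is derived from Carath\'eodory's theorem for points and directions, which is proved by this same linear-dependence elimination applied in a homogenized space $\R^{p+1}$, so you have reproduced the standard argument directly in $\R^p$.
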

\begin{proof}
    See~\cite[Corollary 17.1.2]{rockafellar2015convex}.
\end{proof}

Applying the above result to~\eqref{eq:nonconvex-betaK-zK-output-big}, we get that there exists $\tilde{\omega}_i^l \geq 0$ such that 
\begin{align}
    \begin{pmatrix}
        \beta^K \\ z^K \\ \ones_\n
    \end{pmatrix}  = \sum_{i=1}^\n \sum_{l \in Q_i} \tilde{\omega}_i^l  \begin{pmatrix} \frac{1}{\n}f_i(x_i^l) \\ \frac{1}{\n}A_i x_i^l \\ e_i \end{pmatrix}, 
\end{align}
where at most $1 + m + \n$ of the scalars $\tilde{\omega}_i^l$ are nonzero. By looking at the last $\n$ coordinates, note that it must hold that $\sum_{l \in Q_i} \tilde{\omega}_i^l = 1$ for all $i=1,\dots, \n$. By a pigeonhole argument, this implies that the convex combination is actually trivial for at least $\n - (m + 1)$ indices. In other words, this gives
\begin{align}
\label{eq:nonconvex-betaK-zK-caratheodory-output}
    \begin{pmatrix}
        \beta^K \\ z^K \\
    \end{pmatrix}  = \sum_{i \in S} \sum_{l \in Q_i} \tilde{\omega}_i^l  \begin{pmatrix} \frac{1}{\n}f_i(x_i^l) \\ \frac{1}{\n}A_i x_i^l \end{pmatrix} + \sum_{i \not \in S}  \begin{pmatrix} \frac{1}{\n}f_i(x_i^{l_i}) \\ \frac{1}{\n}A_i x_i^{l_i} \end{pmatrix},
\end{align}
for some $S \subset \{1, \dots, \n\}$ with $|S| \leq m+1$ and some $l_i \in Q_i$ for all $i \not \in S$. We summarize the full method in Algorithm~\ref{alg:2-stage-algorithm-nonconvex}.

\begin{algorithm}
\caption{Two-stage algorithm to solve~\eqref{eq:nonconvex-primal}.}
\label{alg:2-stage-algorithm-nonconvex}
\begin{algorithmic}[1]
\STATE{\textbf{Input}: $\lambda_0 \in \R^m_+$, number of iterations $T$ of stochastic dual subgradient algorithm, stepsize sequence $\{\alpha_t\}_{t=0}^{T-1}$, number of iterations $K$ of BCFW.}
\STATE{Run Algorithm~\ref{alg:stochastic-dual-subgradient-separable-primal} with input $\lambda_0$ for $T$ iterations with stepsize $\{\alpha_t\}_{t=0}^{T-1}$.}
\STATE{Obtain output $\bar{\lambda}_T$.}
\STATE{Run Algorithm~\ref{alg:BCFW} with input $d(\bar{\lambda}_T)$ and $x_i^0 \in \domfi $ such that $f_i(x_i^0) = f_i^{**}(x_i^0)$ for all $i=1, \dots, \n$.}
\STATE{Obtain output $(\beta^K, z^K)$ as in~\eqref{eq:nonconvex-bcfw-betaK-zK-output}.}
\STATE{Compute Carath\'eodory decomposition to obtain decomposition of $(\beta^K, z^K)$ as in~\eqref{eq:nonconvex-betaK-zK-caratheodory-output}.}
\STATE{\textbf{Return:} For $i \in S$, the couples $\{(\tilde{\omega}_i^k,  x_i^k)\}_{k \in J_i}$. For $i\not \in S$, the vector $x_i^{k_i}$.}
\end{algorithmic}
\end{algorithm}

Note that at this point we simply stated the existence of the conic Carath\'eodory decomposition in Theorem~\ref{thm:conic-caratheodory}, but did not discuss algorithms to obtain it. A full description of algorithms to compute such decompositions being outside the scope of this work, we simply assume that we have access to one. We refer the reader to~\cite{beck2017linearly,wirth2024frank,besanccon2025pivoting,dubois2025frank} for algorithms computing an exact decomposition, which typically rely on iteratively solving simple linear systems, leading to high complexity and limiting their effectiveness in large-scale settings. Alternatively, approximate Carath\'eodory decompositions can be computed using fast variants of the Frank-Wolfe algorithm on a well-crafted objective function~\cite{combettes2023revisiting}. In particular, the min-norm point algorithm~\cite{wolfe1970convergence} has been observed to efficiently compute good Carath\'eodory decompositions in practical large-scale applications~\cite{dubois2025frank}.

\subsection{Building final solution}

From the output of the Carath\'eodory decomposition algorithm from~\eqref{eq:nonconvex-betaK-zK-caratheodory-output}, we are ready to build final candidates for the primal problem~\eqref{eq:nonconvex-primal}. We distinguish two cases: when the function domains $\domfi$ are convex and when they are not.

\subsubsection{Convex domains}

When the function domains $\domfi$ are convex, a final solution $\bar{x} \in \R^d$ can be constructed as
\begin{align}
\label{eq:final-reconstruction-convex-domain}
    \bar{x}_i = \begin{cases}
        \sum_{l \in Q_i} \tilde{\omega}_i^l x_i^l &\text{ if } i\in S \\
        x_i^{l_i} &\text{ if } i \not \in S
    \end{cases} \quad i=1,\dots, \n.
\end{align}

We next present the final convergence rate for this construction, which recovers the theoretical duality gap from Aubin and Ekeland (Theorem~\ref{thm:duality-gap-estimation-convex-domains}) up to an error term decreasing with the number of iterations of Algorithms~\ref{alg:dual-subgradient-separable-primal} and~\ref{alg:BCFW}. We give the rate in $O$-notation to simplify the exposition.

\begin{thm}
\label{thm:nonconvex-2-stage-convergence}
    Consider the nonconvex problem~\eqref{eq:nonconvex-primal} with $\domfi$ convex and the nonconvexity $\rho(f_i)$ of $f_i$ defined in~\eqref{eq:function-nonconvexity-definition} for all $i=1, \dots, \n$. Assume we apply Algorithm~\ref{alg:2-stage-algorithm-nonconvex} with input $\lambda_0 = 0_m$ and $x_i^0 \in \domfi$ such that $f_i(x_i^0) = f_i^{**}(x_i^0)$ for all $i$, and stepsize $\alpha_t = \frac{\alphabar}{\Gtilde \sqrt{T}}$. The output $\bar{x}$, as constructed in~\eqref{eq:final-reconstruction-convex-domain}, then satisfies
    \begin{align*}
        &\E\left[\frac{1}{\n} \sum_{i=1}^\n f_i(\bar{x}_i) - d^*\right] \leq \frac{m+1}{\n} \max_i \rho(f_i) + O\left( \frac{1}{\sqrt{T}} + \frac{1}{\sqrt{K}} + \frac{\n}{K} \left( \sqrt{\frac{\n}{T}} + \frac{\n}{T} \right) \right)\\
        & \E\norm{\frac{1}{\n} \sum_{i=1}^\n A_i \bar{x}_i - b}_+ \leq O\left( \frac{1}{\sqrt{T}} + \frac{1}{\sqrt{K}} + \frac{\n}{K} \left( \sqrt{\frac{\n}{T}} + \frac{\n}{T} \right) \right).
    \end{align*}
\end{thm}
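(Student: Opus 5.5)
The plan is to compare the reconstructed point $\bar{x}$ of~\eqref{eq:final-reconstruction-convex-domain} with the Carath\'eodory representation~\eqref{eq:nonconvex-betaK-zK-caratheodory-output} of $(\beta^K, z^K)$. The rates then follow directly from Proposition~\ref{prop:nonconvex-two-stage-convergence-beta-z}. Two facts will be used throughout: convexity of $\domfi$ guarantees $\bar{x}_i \in \domfi$, and $\ErrTK(T,K)$ is, in $O$-notation, exactly the error term appearing in the statement.

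\textbf{Feasibility.} By linearity, for $i\in S$ we have $\frac{1}{\n}A_i\bar{x}_i = \sum_{l\in Q_i}\tilde{\omega}_i^l\frac{1}{\n}A_ix_i^l$, and for $i\notin S$ we have $\frac{1}{\n}A_i\bar{x}_i = \frac{1}{\n}A_ix_i^{l_i}$. Summing over $i$ and using~\eqref{eq:nonconvex-betaK-zK-caratheodory-output} gives $\frac{1}{\n}\sum_i A_i\bar{x}_i = z^K$ exactly. The infeasibility bound is therefore $\E\norm{z^K-b}_+\le \ErrTK(T,K)$, which is Proposition~\ref{prop:nonconvex-two-stage-convergence-beta-z}.

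\textbf{Suboptimality.} For $i\notin S$, $f_i(\bar{x}_i)$ is exactly the term appearing in $\beta^K$. For $i\in S$, the definition~\eqref{eq:function-nonconvexity-definition} of $\rho(f_i)$ applies, since $\bar{x}_i$ is a finite convex combination of points $x_i^l\in\domfi$. It gives
\[ f_i(\bar{x}_i) \le \sum_{l\in Q_i}\tilde{\omega}_i^l f_i(x_i^l) + \rho(f_i). \]
Summing, dividing by $\n$, and using $|S|\le m+1$ yields
\[ \frac{1}{\n}\sum_{i=1}^\n f_i(\bar{x}_i) \le \beta^K + \frac{m+1}{\n}\max_i\rho(f_i). \]
Subtracting $d^*$, taking total expectation and applying Proposition~\ref{prop:nonconvex-two-stage-convergence-beta-z} gives the first bound.

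\textbf{Main obstacle.} The delicate point is justifying that the Carath\'eodory step really produces~\eqref{eq:nonconvex-betaK-zK-caratheodory-output}, with $\beta^K$ expressed through values of $f_i$ rather than $f_i^{**}$. This requires two checks. First, every atom $x_i^l$ must satisfy $f_i(x_i^l)=f_i^{**}(x_i^l)$. This holds for oracle outputs by Proposition~\ref{prop:extreme-points-match}, for the initialization by the assumption on $x_i^0$, and for the first-stage atoms entering through~\eqref{eq:beta0-nonconvex-definition}. Second, the $z$-component and the $\beta$-component must share the same atoms and weights. For this I would note that $z_i^0=\frac1\n A_i\bar{x}_{i,T}$ uses the same first-stage points with weights $1/(I_i+1)$ as $\beta_i^0$. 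Hence the representation~\eqref{eq:nonconvex-betaK-zK-output} holds with a common family $\{x_i^l,\omega_i^l\}$, and the conic Carath\'eodory theorem applies to the stacked vector~\eqref{eq:nonconvex-betaK-zK-output-big}. The pigeonhole count then goes as follows: each of the $\n$ last coordinates forces at least one nonzero weight, and there are at most $1+m+\n$ nonzero weights in total, so at most $m+1$ indices carry more than one atom. For the remaining indices the weights on $e_i$ sum to one, so the single weight equals $1$. Once this is in place, the estimates above are routine.
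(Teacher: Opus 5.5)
Your proposal is correct and follows the paper's proof essentially step for step: the identity $\frac{1}{\n}\sum_i A_i\bar{x}_i = z^K$ for infeasibility, the bound $f_i(\bar{x}_i)\le\sum_{l\in Q_i}\tilde{\omega}_i^l f_i(x_i^l)+\rho(f_i)$ on the at most $m+1$ indices of $S$ giving $\frac{1}{\n}\sum_i f_i(\bar{x}_i)\le\beta^K+\frac{m+1}{\n}\max_i\rho(f_i)$, and then Proposition~\ref{prop:nonconvex-two-stage-convergence-beta-z}. Your ``main obstacle'' paragraph re-derives the common-atom representation and the Carath\'eodory/pigeonhole step that the paper establishes just before the theorem. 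Note that invoking that proposition, as the paper also does, implicitly relies on the first-stage initialization~\eqref{eq:beta0-nonconvex-definition}--\eqref{eq:z0-nonconvex-definition} rather than an arbitrary $x_i^0$.
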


\begin{proof}
    For primal suboptimality, we have
    \begin{align*}
    \frac{1}{\n} \sum_{i=1}^\n f_i(\bar{x}_i) &= \frac{1}{\n} \left( \sum_{i\in S} f_i\left(\sum_{l \in Q_i} \tilde{\omega}_i^l x_i^l \right) + \sum_{i \not \in S} f_i(x_i^{l_i}) \right)  \\
        &= \frac{1}{\n} \left( \sum_{i\in S} \left( f_i\left(\sum_{l \in Q_i} \tilde{\omega}_i^l x_i^l \right) - \sum_{l\in Q_i} \tilde{\omega}_{i}^l f_i(x_i^l) + \sum_{l\in Q_i} \tilde{\omega}_{i}^l f_i(x_i^l) \right) + \sum_{i \not \in S} f_i(x_i^{l_i}) \right).
        \end{align*}
        Recalling the definition of $\rho(f_i)$ in~\eqref{eq:function-nonconvexity-definition}, we get
        \begin{align*}
        \frac{1}{\n} \sum_{i=1}^\n f_i(\bar{x}_i) &\leq \frac{1}{\n} \left( \sum_{i\in S} \left( \rho(f_i) + \sum_{l\in Q_i} \tilde{\omega}_{i}^l f_i(x_i^l) \right) + \sum_{i \not \in S} f_i(x_i^{l_i}) \right) \\
        &\leq \frac{m+1}{\n} \max_{i} \rho(f_i) + \frac{1}{\n} \left( \sum_{i\in S} \sum_{l\in Q_i} \tilde{\omega}_{i}^l f_i(x_i^l) + \sum_{i \not \in S} f_i(x_i^{l_i}) \right) \\
        &= \frac{m+1}{\n} \max_{i} \rho(f_i) + \beta^K.
    \end{align*}
    Applying Proposition~\ref{prop:nonconvex-two-stage-convergence-beta-z} and hiding the constants in the $O$-notation, we get
    \begin{align*}
        \E[\frac{1}{\n} \sum_{i=1}^\n f_i(\bar{x}_i) - d^*] &\leq \frac{m+1}{\n} \max_i \rho(f_i) + \E[\beta^K - d^*] \\
        &\leq O\left( \frac{1}{\sqrt{T}} + \frac{1}{\sqrt{K}} + \frac{\n}{K} \left( \sqrt{\frac{\n}{T}} + \frac{1}{\sqrt{T}} + \frac{\n}{T} \right) \right).
    \end{align*}
    For infeasibility, we have
    \begin{align*}
        \frac{1}{\n} \sum_{i=1}^\n A_i \bar{x}_i  - b= \frac{1}{\n} \left( \sum_{i\in S}A_i \sum_{k \in J_i} \tilde{\omega}_i^k x_i^k  + \sum_{i \not \in S} A_i x_i^{k_i}\right) - b = z^K - b.
    \end{align*}
    Applying Proposition~\ref{prop:nonconvex-two-stage-convergence-beta-z} just like above yields the result.
\end{proof}

We can then specify the complexity in terms of a given accuracy $\epsilon > 0$.

\begin{cor}
    Consider the setting of Theorem~\ref{thm:nonconvex-2-stage-convergence} and let $\epsilon > 0$. By setting $T = K$, the solution $\bar{x}$ defined in~\eqref{eq:final-reconstruction-convex-domain} satisfies
    \begin{align*}
        &\E[\frac{1}{\n} \sum_{i=1}^\n f_i(\bar{x}_i) - d^*] \leq \frac{m+1}{\n} \max_i \rho(f_i) + \epsilon \ \text{ and } \ \E\norm{\frac{1}{\n} \sum_{i=1}^\n A_i \bar{x}_i - b}_+ \leq \epsilon,
    \end{align*}
after at most $O\left( \frac{1}{\epsilon^2} + \frac{\n}{\epsilon^{2/3}}\right)$ calls to oracle~\eqref{eq:nonconvex-oracle}. An additional computation of  the Carath\'eodory decomposition is also required.
\end{cor}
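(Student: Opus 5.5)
The plan is to derive the corollary directly from Theorem~\ref{thm:nonconvex-2-stage-convergence}, in the same way Theorem~\ref{thm:2-stage-algorithm-complexity} followed from Theorem~\ref{thm:convergence-2-stage-algorithm}.

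\textbf{Step 1: collapse the error term.} Set $T=K$ in the $O$-term of Theorem~\ref{thm:nonconvex-2-stage-convergence}. The generic error then becomes
\begin{align*}
    O\left(\frac{1}{\sqrt{T}} + \frac{\n}{T}\sqrt{\frac{\n}{T}} + \frac{\n}{T}\cdot\frac{1}{\sqrt{T}} + \frac{\n^2}{T^2}\right) = O\left(\frac{1}{\sqrt{T}} + \frac{\n^{3/2}}{T^{3/2}} + \frac{\n^2}{T^2}\right).
\end{align*}
Here the $\frac{\n}{T^{3/2}}$ term that appears in the proof of that theorem is dominated by $\frac{\n^{3/2}}{T^{3/2}}$. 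This single bound controls both the expected suboptimality, beyond the additive Shapley--Folkman term $\frac{m+1}{\n}\max_i \rho(f_i)$, and the expected infeasibility.

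\textbf{Step 2: choose $T$.} Make each of the three summands at most $\epsilon/3$:
\begin{itemize}
    \item $\frac{1}{\sqrt{T}} \le \epsilon/3$ requires $T = O(1/\epsilon^2)$;
    \item $(\n/T)^{3/2} \le \epsilon/3$ requires $T = O(\n/\epsilon^{2/3})$;
    \item $(\n/T)^2 \le \epsilon/3$ requires $T = O(\n/\epsilon^{1/2})$.
\end{itemize}
For $\epsilon\le 1$ the third requirement is dominated by the second. Taking $T=K=O(1/\epsilon^2 + \n/\epsilon^{2/3})$ therefore gives both stated inequalities. The regime $\epsilon>1$ only affects constants and can be absorbed in the $O$-notation.

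\textbf{Step 3: count oracle calls.}
\begin{itemize}
    \item Stage 1 (Algorithm~\ref{alg:stochastic-dual-subgradient-separable-primal}) uses one oracle call per iteration, plus $\n$ calls in the final deterministic step.
    \item Stage 2 (Algorithm~\ref{alg:BCFW}) uses one call per iteration.
    \item By Proposition~\ref{prop:extreme-points-match}, every call may be taken to be the nonconvex oracle \eqref{eq:oracle_nc}, and each call solves \eqref{eq:nonconvex-oracle}.
\end{itemize}
The total is $O(\n + T + K) = O(1/\epsilon^2 + \n/\epsilon^{2/3})$, since $\n \le \n/\epsilon^{2/3}$ for $\epsilon\le 1$.

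\textbf{Initialization and post-processing.} The initial points $x_i^0$ with $f_i(x_i^0)=f_i^{**}(x_i^0)$ must also be produced. Either one call per agent to the oracle (e.g.\ at $\lambda=0$) or reuse of the first-stage outputs \eqref{eq:beta0-nonconvex-definition} suffices. This costs $O(\n)$ calls and is absorbed in the bound. The Carath\'eodory step makes no oracle calls and is counted separately, as the statement indicates.

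The main obstacle is only bookkeeping: correctly identifying the dominant terms after setting $T=K$ and checking that the $\n$ extra calls do not change the order.
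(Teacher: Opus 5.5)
Your Steps 1--3 are the paper's argument: set $T=K$ in Theorem~\ref{thm:nonconvex-2-stage-convergence}, collapse the error to $O(1/\sqrt{T} + \n^{3/2}/T^{3/2} + \n^2/T^2)$, and solve for $T$. Your third threshold $T = O(\n/\epsilon^{1/2})$ is the right one. The paper's proof writes $\n/\epsilon^2$ there, which is a typo, since such a term would not be dominated. Your explicit count $O(\n + T + K)$ supplies what the paper leaves implicit by analogy with Theorem~\ref{thm:2-stage-algorithm-complexity}.

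One claim in your initialization paragraph is wrong, though. Starting the BCFW stage from one oracle call per agent at $\lambda = 0$ does not give the stated rate.

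The term $\frac{\n}{K}\bigl(\sqrt{\n/T} + \n/T\bigr)$ in Theorem~\ref{thm:nonconvex-2-stage-convergence} comes from Proposition~\ref{prop:nonconvex-two-stage-convergence-beta-z}. That proposition controls $\Eone\bigl[\sqrt{F((\beta^0,z^0)) - F^*}\bigr]$ only because $(\beta^0,z^0)$ is the warm start \eqref{eq:beta0-nonconvex-definition}--\eqref{eq:z0-nonconvex-definition} built from the first-stage iterates.

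If instead the $x_i^0$ are unconstrained minimizers of the $f_i$, then $\norm{z^0 - b}_+$ is only bounded by $G$ and does not decay with $T$. The last term in \eqref{eq:nonconvex-betaK-zK-rate-randomness-alg2} is then of order $\n/K$. With $T=K$, the available guarantee requires $K$ of order $\n/\epsilon$, giving overall complexity $O(1/\epsilon^2 + \n/\epsilon)$.

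The hypothesis ``$f_i(x_i^0) = f_i^{**}(x_i^0)$'' in the theorem and in Algorithm~\ref{alg:2-stage-algorithm-nonconvex} is loosely worded on this point, but the proof needs the warm start. So only your second option, reusing the first-stage outputs, is valid. It costs no extra oracle calls, since the points $x_i^*(\lambda_t)$ are already computed in stage 1.
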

\begin{proof}
    From Theorem~\ref{thm:nonconvex-2-stage-convergence} with $T = K$, we have
    \begin{align*}
        &\E[\frac{1}{\n} \sum_{i=1}^\n f_i(\bar{x}_i) - d^*] \leq \frac{m+1}{\n} \max_i \rho(f_i) + O\left( \frac{1}{\sqrt{T}} + \frac{\n^{3/2}}{T^{3/2}} + \frac{\n^2}{T^2} \right),\\
        & \E\norm{\frac{1}{\n} \sum_{i=1}^\n A_i \bar{x}_i - b}_+ \leq O\left( \frac{1}{\sqrt{T}} + \frac{\n^{3/2}}{T^{3/2}} + \frac{\n^2}{T^2} \right).
    \end{align*}
    Thus we need to set $T = O\left(\max(1/\epsilon^2, \n/\epsilon^{2/3}, \n/\epsilon^2)\right) = O\left(1/\epsilon^2 + \n/\epsilon^{2/3}\right)$ in order to reach $\epsilon$-precision.
\end{proof}


\begin{remark}
    Let us compare our complexity bound to the results from~\cite{dubois2025frank}, and particularly to Theorem 3.2 in that work, which recovers the same bound $\frac{m+1}{\n} \max_i \rho(f_i)$, up to an error term $O(\frac{1}{\sqrt{K}})$, where $K$ is the number of iterations of the Frank-Wolfe algorithm they use, which requires computing oracle~\eqref{eq:nonconvex-oracle} for all agents $\n$ at each iteration. Therefore, the overall complexity of their method is $O(\n/\epsilon^2)$ to recover an $\epsilon$-approximate duality gap bound. In contrast, we get a better overall complexity $O(1/\epsilon^2 + \n /\epsilon^{2/3})$, without assuming knowledge of $d^*$.
\end{remark}

\subsubsection{Nonconvex domains}

We end this section by discussing the case of nonconvex function domains. We can no longer build a final solution as in~\eqref{eq:final-reconstruction-convex-domain}, as for $i \in S$ we might get candidates not in the domain of $f_i$. There are several ways to go around this issue. We explore one such, which consists in a simple sampling strategy. Duality gap bounds for nonconvex domains are typically based on the following more general notion of nonconvexity~\cite{bertsekas2014constrained}
\begin{align}
\label{eq:general-function-nonconvexity-definition}
    \gamma(f_i) := \sup_{x_i \in \domfi} f_i(x_i) - \inf_{y_i \in \domfi} f_i(y_i).
\end{align}

Recalling the equation for $(\beta^K, z^K)$ in~\eqref{eq:nonconvex-betaK-zK-caratheodory-output}, our sampling strategy then consists in, for each $i \in S$, sampling $\tilde{l}_i \in Q_i$ with probability $\tilde{\omega}^{\tilde{l}_i}_i$ and setting
\begin{align}
\label{eq:final-reconstruction-nonconvex-domain-general}
    \bar{x}_i = \begin{cases}
        x_i^{\tilde{l}_i} &\text{ for } i\in S \\
        x_i^{l_i} &\text{ for } i \not \in S
    \end{cases} \quad i=1,\dots, \n.
\end{align}
We can then prove the following result, similar in spirit to Theorem~\ref{thm:nonconvex-2-stage-convergence}, except that $\rho(f_i)$ is replaced by $\gamma(f_i)$ and that the infeasibility is assessed component-wise.

\begin{thm}
\label{thm:nonconvex-2-stage-convergence-nonconvex-domains}
    Consider the nonconvex problem~\eqref{eq:nonconvex-primal} with nonconvexity $\gamma(f_i)$ of $f_i$ defined as in~\eqref{eq:general-function-nonconvexity-definition} for all $i=1, \dots, \n$. Assume we apply Algorithm~\ref{alg:2-stage-algorithm-nonconvex} with input $\lambda_0 = 0_m$ and $x_i^0 \in \domfi$ such that $f_i(x_i^0) = f_i^{**}(x_i^0)$ for all $i=1,\dots, \n$, and stepsize $\alpha_t = \alpha = \frac{\alphabar}{\Gtilde \sqrt{T}}$. The output $\bar{x}$, as constructed in~\eqref{eq:final-reconstruction-nonconvex-domain-general}, then satisfies
    \begin{align*}
        &\E\left[\frac{1}{\n} \sum_{i=1}^\n f_i(\bar{x}_i) - d^*\right] \leq \frac{m+1}{\n} \max_i \gamma(f_i) + O\left( \frac{1}{\sqrt{T}} + \frac{1}{\sqrt{K}} + \frac{\n}{K} \left( \sqrt{\frac{\n}{T}} + \frac{\n}{T} \right) \right)\\
        & \E\left[\frac{1}{\n} \sum_{i=1}^\n A_i \bar{x}_i - b \right] \leq   O\left( \frac{1}{\sqrt{T}} + \frac{1}{\sqrt{K}} + \frac{\n}{K} \left( \sqrt{\frac{\n}{T}} + \frac{\n}{T} \right) \right).
    \end{align*}
\end{thm}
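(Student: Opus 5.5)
The plan is to condition on the output of Algorithm~\ref{alg:2-stage-algorithm-nonconvex}, namely the Carath\'eodory decomposition~\eqref{eq:nonconvex-betaK-zK-caratheodory-output}, and first take the expectation over the sampling $\tilde{l}_i \sim \tilde{\omega}_i$ for $i \in S$. Denote this inner expectation by $\Ethree$. With all of Algorithms~\ref{alg:stochastic-dual-subgradient-separable-primal} and~\ref{alg:BCFW} fixed, the reconstruction~\eqref{eq:final-reconstruction-nonconvex-domain-general} produces $\bar{x}_i \in \domfi$ for every $i$, because each $x_i^l$ lies in $\domfi$. The two claimed bounds then follow by the tower property, once we compare the sampled quantities with $(\beta^K, z^K)$ and apply Proposition~\ref{prop:nonconvex-two-stage-convergence-beta-z}.

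For infeasibility, linearity of $\Ethree$ gives
\begin{align*}
\Ethree\left[\frac{1}{\n}\sum_{i=1}^\n A_i\bar{x}_i\right] = \frac{1}{\n}\Big(\sum_{i\in S}\sum_{l\in Q_i}\tilde{\omega}_i^l A_i x_i^l + \sum_{i\notin S}A_i x_i^{l_i}\Big) = z^K .
\end{align*}
Hence $\E[\frac{1}{\n}\sum_i A_i\bar{x}_i - b] = \E[z^K - b] \le \E[\ProjPos{z^K-b}]$ componentwise. Each component of the right-hand side is bounded by $\E\norm{z^K-b}_+$, which Proposition~\ref{prop:nonconvex-two-stage-convergence-beta-z} bounds by $\ErrTK(T,K)$. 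This explains why the statement only asserts the componentwise (unnormed) inequality. The sampling destroys any control on $\E\norm{\cdot}_+$ beyond Jensen in the wrong direction, so we do not attempt to bound that norm.

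For suboptimality, the same computation gives the exact identity
\begin{align*}
\Ethree\left[\frac{1}{\n}\sum_i f_i(\bar{x}_i)\right] = \frac{1}{\n}\Big(\sum_{i\in S}\sum_{l}\tilde{\omega}_i^l f_i(x_i^l) + \sum_{i\notin S} f_i(x_i^{l_i})\Big) = \beta^K .
\end{align*}
This already gives a bound without the $\gamma(f_i)$ term. That is suspicious, and I expect the main subtlety to lie here. The bound $\E[\beta^K - d^*]\le \ErrTK$ is genuinely available from Proposition~\ref{prop:nonconvex-two-stage-convergence-beta-z}, so the statement as written would follow a fortiori, since $\gamma(f_i)\ge 0$ and the added term only loosens it.

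I would still check carefully that the decomposition indeed satisfies $\beta^K = \sum_i\sum_l \tilde{\omega}_i^l f_i(x_i^l)/\n$ with $f_i$ rather than $f_i^{**}$. This relies on Proposition~\ref{prop:extreme-points-match} and on the initialization $f_i(x_i^0)=f_i^{**}(x_i^0)$. If some step only yields a bound of the form $\beta^K$ plus a deviation term, I would absorb that term by bounding each $f_i(x_i^{\tilde l_i}) - \sum_l \tilde{\omega}_i^l f_i(x_i^l) \le \gamma(f_i)$ for the at most $m+1$ indices in $S$. That deviation bound yields the $\frac{m+1}{\n}\max_i\gamma(f_i)$ term. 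The $O(\cdot)$ rates come from hiding the constants $P_1,\dots,P_5$ in $\ErrTK(T,K)$, noting that the $1/\sqrt{T}$ term inside the bracket is dominated.
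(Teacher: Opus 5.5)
Your proof is correct. The infeasibility part is exactly the paper's argument: average over the sampling to get $z^K-b$, bound it componentwise by $\norm{z^K-b}_+$, then apply Proposition~\ref{prop:nonconvex-two-stage-convergence-beta-z}.

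For suboptimality you take a different and sharper route. The paper never averages the cost over the sampling. For every realization of $\tilde l_i$ it uses $f_i(x_i^{\tilde l_i})-\sum_{l\in Q_i}\tilde\omega_i^l f_i(x_i^l)\le\gamma(f_i)$ on the at most $m+1$ indices of $S$. This gives the pathwise inequality $\frac{1}{\n}\sum_i f_i(\bar x_i)\le\beta^K+\frac{m+1}{\n}\max_i\gamma(f_i)$, and only then does the paper take expectations. In other words, your ``fallback'' is precisely the paper's proof.

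Your exact identity $\Ethree\bigl[\frac{1}{\n}\sum_i f_i(\bar x_i)\bigr]=\beta^K$ is not suspicious; the sampling is unbiased by construction. It relies on the same representation of $\beta^K$ through $f_i$ (not $f_i^{**}$) from \eqref{eq:nonconvex-betaK-zK-caratheodory-output} that the paper uses in the last equality of its own chain. So there is no additional subtlety to check beyond what the paper also needs. Your route gives the bound with no $\gamma$ term at all, and the stated inequality follows because $\gamma(f_i)\ge 0$.

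What each route buys:
\begin{itemize}
\item Yours shows that, for guarantees stated purely in expectation, neither the $\gamma$ term nor $|S|\le m+1$ is needed, and hence neither is the Carath\'eodory step. Sampling every $i$ directly from the BCFW weights $\omega_i$ would give the same expected cost and componentwise infeasibility.
\item The paper's argument controls the cost of every realized $\bar x$: the deviation from $\beta^K$ is surely at most $\frac{m+1}{\n}\max_i\gamma(f_i)$. This is where the Shapley--Folkman-type structure actually enters, and it is what any almost-sure or high-probability statement would require.
\end{itemize}

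One caveat is shared with the paper. Invoking Proposition~\ref{prop:nonconvex-two-stage-convergence-beta-z} tacitly uses the warm start \eqref{eq:beta0-nonconvex-definition}--\eqref{eq:z0-nonconvex-definition}, not an arbitrary $x_i^0$ with $f_i(x_i^0)=f_i^{**}(x_i^0)$. With an arbitrary such $x_i^0$, the term $\frac{2\n}{K}\sqrt{F((\beta^0,z^0))-F^*}$ is only $O(\n/K)$, which does not fit the stated rate.
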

\begin{proof}
    For primal suboptimality, we have
    \begin{align*}
        \frac{1}{\n} \sum_{i=1}^\n f_i(\bar{x}_i) &= \frac{1}{\n} \left( \sum_{i\in S} f_i\left(x_i^{\tilde{l}_i} \right) + \sum_{i \not \in S} f_i(x_i^{l_i}) \right)  \\
        &= \frac{1}{\n} \left( \sum_{i\in S} \left( f_i\left(x_i^{\tilde{l}_i} \right) - \sum_{l\in Q_i} \tilde{\omega}_{i}^l f_i(x_i^l) + \sum_{l\in Q_i} \tilde{\omega}_{i}^l f_i(x_i^l) \right) + \sum_{i \not \in S} f_i(x_i^{l_i}) \right) \\
        &\leq \frac{1}{\n} \left( \sum_{i\in S} \left( \gamma(f_i) + \sum_{l\in Q_i} \tilde{\omega}_{i}^l f_i(x_i^l) \right) + \sum_{i \not \in S} f_i(x_i^{l_i}) \right) \\
        &\leq \frac{m+1}{\n} \max_{i} \gamma(f_i) + \frac{1}{\n} \left( \sum_{i\in S} \sum_{l\in Q_i} \tilde{\omega}_{i}^l f_i(x_i^l) + \sum_{i \not \in S} f_i(x_i^{l_i}) \right) \\
        &= \frac{m+1}{\n} \max_{i} \gamma(f_i) + \beta^K.
    \end{align*}
    The proof can then proceed exactly as in the proof of Theorem~\ref{thm:nonconvex-2-stage-convergence}.
    For infeasibility, let $\Ethree$ be the expectation with respect to the randomness induced by~\eqref{eq:final-reconstruction-nonconvex-domain-general}. Note that we have
    \begin{align*}
        \Ethree\left[  \frac{1}{\n} \sum_{i=1}^\n A_i \bar{x}_i  - b\right] = z^K - b \leq \norm{z^K - b}_+,
    \end{align*}
    where the inequality is to be understood component-wise. Taking total expectation, we have
    \begin{align*}
        \E \left[ \frac{1}{\n} \sum_{i=1}^\n A_i \bar{x}_i  - b\right] \leq \E \left[\norm{z^K - b}_+\right].
    \end{align*}
    The proof can then proceed exactly as in the proof of Theorem~\ref{thm:nonconvex-2-stage-convergence}
\end{proof}

To finish, let us point out that the approach proposed in this section also applies to other ways of handling the nonconvexity of the function domains. For example, one might require stronger assumptions on the domains~\cite{vujanic2014large}, or solve a perturbed problem to ensure that the reconstruction is fully feasible~\cite{vujanic2016decomposition,dubois2025frank}.

\section{Numerical results}

We consider the problem of charging a fleet of electric vehicles over a given time horizon, under constraints on the maximum consumption of the overall network. Let us first give a complete description of the problem, inspired by~\cite{vujanic2016decomposition}. The network consists of $\n$ electric vehicles which must be charged over $m$ timesteps of length $\Delta$. The constraints on each vehicle can be summarized as follows. Each vehicle $i$ has an initial level of charge  $E_i^{\text{init}}$, a required level of charge $E_i^{\text{ref}}$ that must be met by the end of the charging session, and a maximum level of energy $E_i^{\text{max}}$. Each vehicle also has a charging rate $P_i$. The variable $x_i^j \in \{0, 1\}$ indicates whether or not vehicle $i$ is charging at timestep $j$. The dynamics of the level of charge $e_i\in \R^m$ is then given by
\begin{align*}
    e_i^{j+1} = e_i^j + P_i \Delta \xi_i x_i^j, 
\end{align*}
where $\xi_i < 1$ is a coefficient reflecting the energy loss.

The coupling constraint forces the total charging procedure not to impose too much demand on the network, i.e.,
\begin{align*}
    \sum_{i=1}^\n P_i x_i^j \leq P^{\text{max}}, \ j=1,\dots, m,
\end{align*}
where $P^{\text{max}}$ is the maximum power that can be allocated to the charging of the fleet. Finally, a vector $C^u \in \R^m$ gives the price of electricity over time. The overall optimization problem can then be written as
\begin{align}
\label{eq:numerical-primal}
\begin{split}
    \mbox{minimize} &\ \frac{1}{\n} \sum_{i=1}^\n f_i(x_i) \\
    \mbox{subject to} & \ \sum_{i=1}^\n P_i x_i \leq P^{\text{max}}, \\
    &\ x_i \in \domfi,  \ i=1,\dots, \n,
    \end{split}
\end{align}
where $f_i(x_i) = \sum_{j=1}^m P_i C^{u}_j x_i^j$ and
\begin{align*}
    \domfi = \left\{ x \in \{0, 1\}^m\ \middle\vert \begin{array}{l}
    E^{\text{init}}_i + \sum_{j=1}^{T}P_i \Delta \xi_i x_i^j \geq E^{\text{ref}}_i, \\
    E^{\text{init}}_i + \sum_{j=1}^{t}P_i \Delta \xi_i x_i^j \leq E^{\text{max}}_i, \ t=1, \dots, T
  \end{array}\right\}.
\end{align*}

This is a nonconvex problem of the form~\eqref{eq:nonconvex-primal}, whose bidual reads
\begin{align}
\label{eq:numerical-bidual}
\begin{split}
    \mbox{minimize} &\ \frac{1}{\n} \sum_{i=1}^\n f_i(x_i) \\
    \mbox{subject to} & \ \sum_{i=1}^\n P_i x_i \leq P^{\text{max}}, \\
    &\ x_i \in \conv(\domfi), \ i=1,\dots, \n.
\end{split}
\end{align}

We shall therefore compare our two-stage algorithm with the dual subgradient method on the convex problem~\eqref{eq:numerical-bidual}, validating the theoretical results from Section~\ref{sec:proposed-algorithm} by doing so. Moreover, the approach to nonconvexity from Section~\ref{sec:nonconvex} also applies to the dual subgradient algorithm, so we will apply a Carath\'eodory algorithm to the outputs of both our two-stage algorithm and the dual subgradient algorithm, and compare the performances on the nonconvex problem~\eqref{eq:numerical-primal}.

\subsection{Data generation and implementation details} We set $\n = 10^4$ and $T = 24$. The full data generation procedure can be found in~\cite[Appendix B]{vujanic2016decomposition}. To compute the optimal dual value, we run the stochastic dual subgradient algorithm for a large amount of time and choose the largest dual value obtained as an approximate optimal dual value. For both our two-stage algorithm and the dual subgradient algorithm, we set the stepsize as $\alpha_t = \frac{\alphabar}{\sqrt{t}}$, and we do a grid search to pick the value of $\alphabar$ achieving the best convergence. We compare the algorithms with the same number of calls to oracle~\eqref{eq:oracle-definition}. Since our two-stage algorithm is stochastic, we average the results over 5 runs. Finally, to recover a solution to the nonconvex primal problem, we use the min-norm point algorithm \cite{wolfe1970convergence} to compute an approximate Carath\'eodory decomposition, and build a final solution as in \eqref{eq:final-reconstruction-nonconvex-domain-general}. The full code to reproduce the experiment is available at: \url{https://github.com/bpauld/SeparableOpt}.

\subsection{Results on convex bidual}

Figure \ref{fig:cvx_convergence} plots the convergence of our algorithm on the dual and the bidual problem. The deterministic subgradient algorithm convergence is plotted in red, while our two-stage algorithm is represented by two curves: the stochastic dual subgradient algorithm in blue followed by the BCFW algorithm in green, along with the standard deviation in shaded color. The top plots show that the stochastic subgradient algorithm significantly outperforms its deterministic counterpart for convergence in the dual, as is expected from the results in Propositions~\ref{prop:epsilon-complexity-dual-subgradient} and~\ref{prop:epsilon-complexity-stochastic-dual-subgradient}. The middle left plot shows the evolution of what we call the bidual gap, which we define as the difference between the cost function of problem~\eqref{eq:numerical-bidual} and its optimal dual value. The reason we do not see some of the curves is that the bidual iterates produced by the algorithms often have a bidual value smaller than the optimal dual value. While this might seem unintuitive, this is due to the fact that those bidual iterates slightly violate the coupling constraints, as can be seen in the middle right plot. Therefore, the most telling plots are the bottom ones, which show the evolution of the sum of the positive part of the bidual gap and the infeasibility as a function of the number of oracle calls. We see that our proposed two-stage algorithm significantly outperforms the deterministic dual subgradient algorithm. It is also interesting to note that the stochastic dual subgradient alone also outperforms its deterministic counterpart.

\begin{figure}
    \centering
    \includegraphics[scale=0.3]{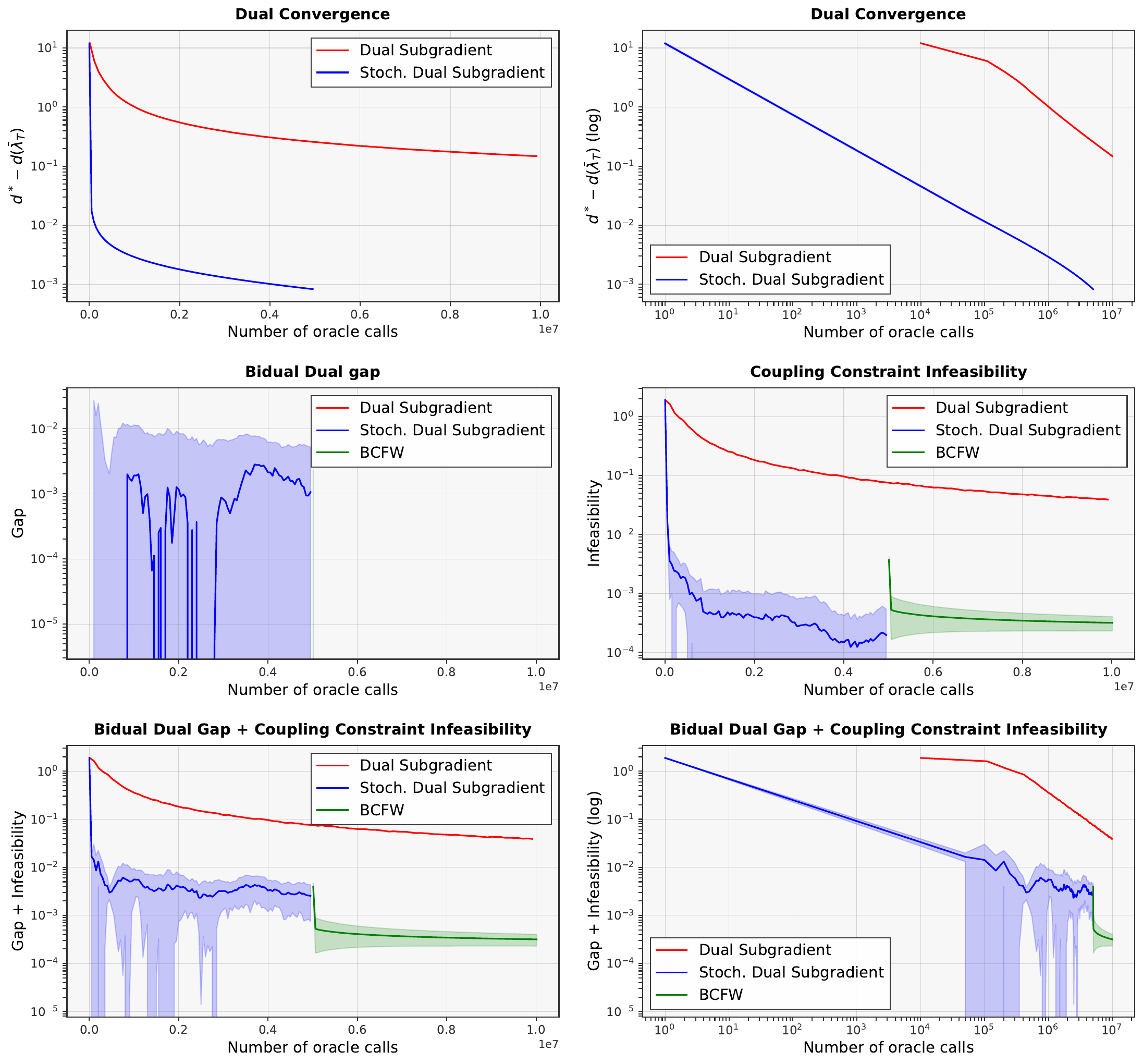}
    \caption{Convergence of the deterministic subgradient algorithm convergence (red) and of our two-stage algorithm (stochastic dual subgradient in blue, BCFW in green). \textbf{Top left}: convergence of the dual objective. \textbf{Top right}: same as top left in log-log scale. \textbf{Middle left}: bidual gap. Some curves are missing as the bidual iterates have a strictly negative bidual gap. This is due to the violation of the coupling constraint. \textbf{Middle right}: violation of the coupling constraint by the bidual iterates. \textbf{Bottom left:} sum of the bidual gap and of the coupling constraint violation. We see that our two-stage algorithm significantly outperforms the deterministic dual subgradient algorithm. \textbf{Bottom right:} same as bottom left in log-log scale.}
    \label{fig:cvx_convergence}
\end{figure}

\subsection{Results on nonconvex primal}

Figure~\ref{fig:noncvx_convergence} plots the convex and nonconvex behavior of our algorithm. More precisely, for different values of the number of oracle calls, blue-green circles represent the performance of the bidual solution of our two-stage algorithm, and black crosses represent the performance of the primal solution after reconstruction using the Carath\'eodory algorithm. Similarly, for the dual subgradient algorithm, the bidual solution is in red circles and the primal solution in red crosses. On the left plot, we see that despite the nonconvexity of the primal problem~\eqref{eq:numerical-primal}, the reconstructed solution is almost as good as the solution to the bidual convex problem~\eqref{eq:numerical-bidual}. This is expected from our theoretical duality gap bounds derived in Theorem~\ref{thm:nonconvex-2-stage-convergence-nonconvex-domains} and the fact that $\n >> m$ in this example. Note that this is also true for the dual subgradient algorithm, although it is harder to see on the plot because of the slower convergence of this algorithm. The right plot is the same as the left one, except that we have removed the dual subgradient curve, have put the x-axis in logarithmic scale and have added a (scaled) $O(1/\sqrt{K})$ curve. This allows us to observe that the convergence of our algorithm on the bidual behaves like the inverse of the square root of the number of calls to the oracle~\eqref{eq:oracle-definition}, as is expected from the results in Theorem~\ref{thm:convergence-2-stage-algorithm}.

\begin{figure}
    \centering
    \includegraphics[scale=0.3]{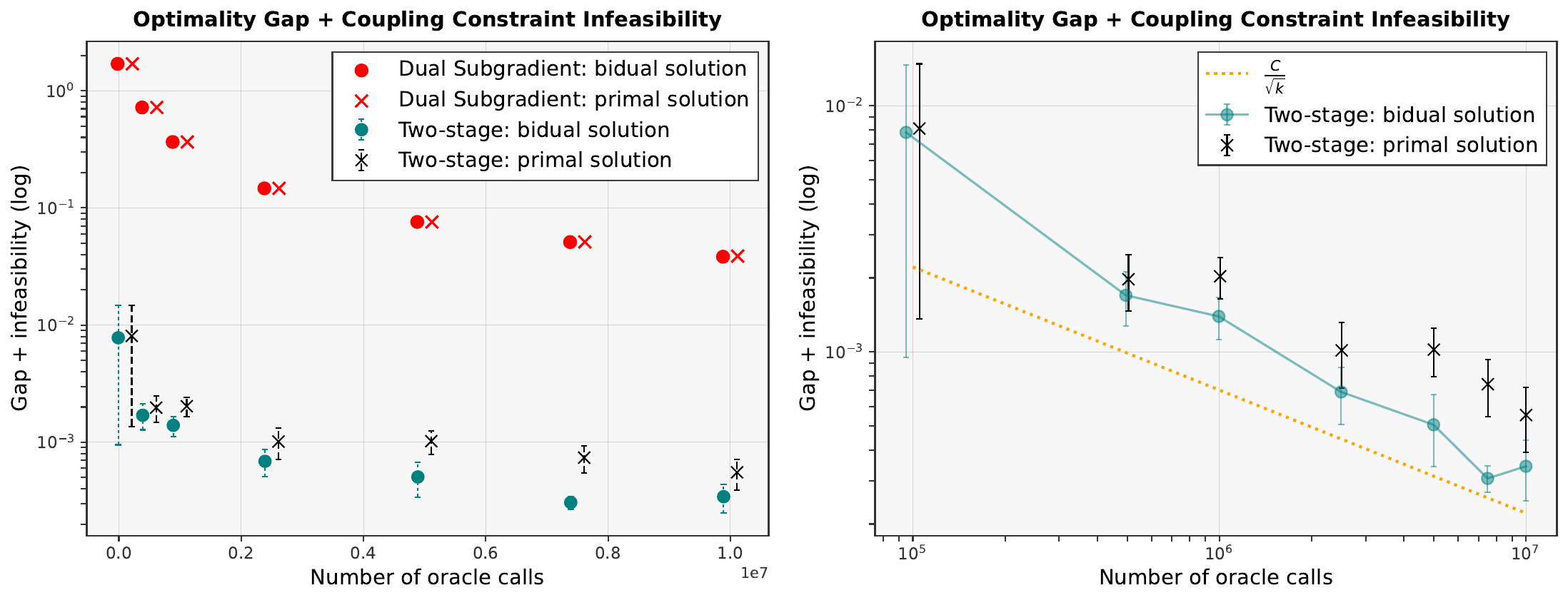}
    \caption{\textbf{Left}. Performance of our two-stage algorithm in the bidual (blue-green circles) and primal (black crosses) compared with the performance of the dual subgradient in the bidual (red circles) and in the primal (red crosses). \textbf{Right}. Same plot in logarithmic x-scale with the additional orange curve being the inverse of the square root of the number of oracle calls. We see that (i) the solution to the nonconvex primal problem is almost as good as the solution to the bidual convex problem, as is expected from the duality gap bounds derived in~\ref{thm:nonconvex-2-stage-convergence-nonconvex-domains} and (ii) the convergence is of the order of the inverse of the square root of the number of oracle calls, as is expected from Theorems~\ref{thm:convergence-2-stage-algorithm} and~\ref{thm:nonconvex-2-stage-convergence-nonconvex-domains}.}
    \label{fig:noncvx_convergence}
\end{figure}

\appendix

\section{Additional proofs}

\subsection{Results on stochastic dual subgradient}
\label{sec:technical-results}
\SDSGDBoundIterates*
\begin{proof}
    For $t=0, \dots, T-2$, we have
    \begin{align*}
        \norm{\lambda_{t+1} - \lambda^*}^2 &=  \norm{\lambda_{t} + \alpha g_t - \lambda^*}^2 \\
        &= \norm{\lambda_t - \lambda^*}^2 + 2\alpha \iprod{g_t}{\lambda_t - \lambda^*} + \alpha^2 \norm{g_t}^2 \\
        &= \norm{\lambda_t - \lambda^*}^2 + 2\alpha \iprod{g_t}{\lambda_t - \lambda^*} + \alpha^2 \norm{A_{i_t}x_{i_t}^*(\lambda_t) - b}^2\\
        &\leq \norm{\lambda_t - \lambda^*}^2 + 2\alpha \iprod{g_t}{\lambda_t - \lambda^*} + \alpha_t^2 \Gtilde^2.
    \end{align*}
    Taking expectation with respect to iteration $t$, observe that we have
    \begin{align*}
        \E[g_t] = \sum_{i=1}^\n \frac{1}{\n} A_i x_i^*(\lambda_t) - b \in \partial d(\lambda_t).
    \end{align*}
    Therefore we have, for $t=0, \dots, T-2$,
    \begin{align}
        \E\norm{\lambda_{t+1} - \lambda^*}^2 &\leq \norm{\lambda_t - \lambda^*}^2 + 2\alpha (d(\lambda_t) - d(\lambda^*)) + \alpha^2 \Gtilde^2 \nonumber \\
        &\leq \norm{\lambda_t - \lambda^*}^2  + \alpha^2 \Gtilde^2
        \label{eq:proof-stochastic-dual-subgradient-bounded-iterates-1}
    \end{align}
    by concavity of $d$ and optimality of $d(\lambda^*)$. Now, for the last iteration $t=T-1$, we have
    \begin{align*}
        \norm{\lambda_{T} - \lambda^*}^2 &=  \norm{\lambda_{T-1} + \alpha g_{T-1} - \lambda^*}^2 \\
        &= \norm{\lambda_{T-1} - \lambda^*}^2 + 2\alpha \iprod{g_t}{\lambda_{T-1} - \lambda^*} + \alpha^2 \norm{g_{T-1}}^2 \\
        &= \norm{\lambda_{T-1} - \lambda^*}^2 + 2\alpha \iprod{g_{T-1}}{\lambda_{T-1} - \lambda^*} + \alpha^2 \norm{\frac{1}{\n}\sum_{i=1}^\n A_i x_i^*(\lambda_t) - b}^2\\ 
        &\leq  \norm{\lambda_{T-1} - \lambda^*}^2 + 2\alpha (d(\lambda_{T-1}) - d(\lambda^*))  + \alpha^2 G^2,
    \end{align*}
    by concavity of $d$. Note that in that case we did not need to take expectation since $g_{T-1} \in \partial d(\lambda_{T-1})$. Since $G \leq \Gtilde$, combining with~\eqref{eq:proof-stochastic-dual-subgradient-bounded-iterates-1} and taking total expectation, we get that for any $t=0,\dots, T-1,$
    \begin{align*}
\E\norm{\lambda_{t+1} - \lambda^*}^2 &\leq \E\norm{\lambda_t - \lambda^*}^2 + \alpha^2 \Gtilde^2.
\end{align*}
Recursing yields the result.
\end{proof}

\subsection{Results concerning the nonconvex framework}
\label{app:nonconvex-results}
\ExtremePointsMatching*

\begin{proof}
For conciseness, we drop the subscripts $i$ throughout the proof and denote as $A$ the linear mapping associated with the matrix $A$, without risk of confusion.
We have $\epi f^{**} = \conv (\epi f)$~\cite[Theorem X.1.3.5 and Lemma X.5.3.1]{hiriart1996convex}. This immediately implies that $\dom(f^{**})= \conv(\dom(f))$, so the minimization problem in \eqref{eq:nonconvex-oracle} is nothing but a minimization of $\gamma f^{**} + \lambda^\top A$.

Assume that $\gamma=0$. We have $(\tilde{x}, f^{**}(\tilde{x})) \in \epi f^{**}$ and we already know that $ \epi f^{**} = \conv (\epi f)$. Therefore, there exist $J \in \mathbb{N}$, $p \in \R^J$, such that
\begin{equation*}
(\tilde{x},f^{**}(\tilde{x}))
= \sum_{j=1}^J p_j (x^j,\beta^j)
\end{equation*}
with $(x^j, \beta^j) \in \epi f$, $p_j > 0$ for all $ j=1,\ldots J$, and $\sum_{j=1}^J p_j= 1$. By assumption, $\tilde{x}$ is an extreme point of $\dom(f)$, so all points $x^j$ are equal to $\tilde{x}$. We deduce that
\begin{equation*}
f^{**}(\tilde{x})
= \sum_{j=1}^J p_j \beta^j
\geq \sum_{j=1}^J p_j f(\tilde{x})
= f(\tilde{x}).
\end{equation*}
Since we also have $f^{**}\leq f$, we conclude that $f^{**}(\tilde{x})= f(\tilde{x})$, as announced.

Let us now assume that $\gamma>0$. Dividing both optimization problems \eqref{eq:nonconvex-oracle} and \eqref{eq:oracle_nc} by $\gamma$, we see that it is sufficient to treat the case where $\gamma=1$. Define $\hat{f} = f + \lambda^\top A$. Direct calculations show that $\hat{f}^{**}= f^{**}+\lambda^\top A$.
 Let us now show that $\tilde{x}$ minimizes $\hat{f}^{**}$.
Since $\tilde{x}$ minimizes $\hat{f}$, it holds $\hat{f}^*(0)= \sup_{y} y^\top 0 - f(y) = -\inf_y f(y) =- \hat{f}(\tilde{x})$. Thus for any $x$, it holds that $\hat{f}^{**}(x) \geq 0^\top x- \hat{f}^*(0)= \hat{f}(\tilde{x})$.
Recalling that $\hat{f}^{**} \leq \hat{f}$, we deduce that $\hat{f}^{**}(\tilde{x})
= \hat{f}(\tilde{x})$. Since $\hat{f}(\tilde{x})$ is a lower bound on $\hat{f}^{**}$, we obtain that $\hat{f}^{**}(\tilde{x}) = \hat{f}(\tilde{x}) = \inf \hat{f}^{**}$, and that $\tilde{x}$ is a solution to \eqref{eq:nonconvex-oracle}.
We also have
$f^{**}(\tilde{x})= \hat{f}^{**}(\tilde{x})- \lambda^\top A \tilde{x}= \hat{f}(\tilde{x})- \lambda^\top A \tilde{x}= f(\tilde{x})$, which concludes the proof.
\end{proof}

\PropNonConvexTwoStageConvergenceBetaZ*
\begin{proof}
The proof closely resembles the proof of Theorem~\ref{thm:convergence-2-stage-algorithm}. Indeed, from equation~\eqref{eq:nonconvex-betaK-zK-rate-randomness-alg2},
\begin{align}
\label{eq:nonconvex-proof-1}
\begin{split}
    &\Etwo[\beta^K -d^* ] \leq (d^* - d(\bar{\lambda}_T)) + \frac{2\SqrtAvgDiamSquared}{\sqrt{K }} + \frac{2\n}{K} \sqrt{F((\beta^0, z^0)) - F^*},\\
        &\Etwo\norm{z^K - b }_+ \leq (d^* - d(\bar{\lambda}_T)) + \frac{2\SqrtAvgDiamSquared}{\sqrt{K }} + \frac{2\n}{K} \sqrt{F((\beta^0, z^0)) - F^*}.
\end{split}
\end{align}
We also already have a bound on $\Eone \left[ d^* - d(\bar{\lambda}_T)\right]$, i.e.
\begin{align}
\label{eq:nonconvex-proof-2}
    \Eone \left[ d^* - d(\bar{\lambda}_T)\right] \leq \frac{\Gtilde \norm{\lambda^*}^2}{2\alphabar\sqrt{T}} + \frac{\Gtilde \alphabar}{2\sqrt{T}}.
\end{align}
The only difference with the result from~\ref{thm:convergence-2-stage-algorithm} is actually the different definition of $\beta^0$. We will prove now that this is actually not an issue. We have
    \begin{align*}
        \Eone \left[ \sqrt{F((\beta^0, z^0)) - F^*}\right] &\leq 
        \Eone \left[ \sqrt{F((\beta^0, z^0))}\right] \\
        &= \Eone  
        \left[\sqrt{\frac{1}{2} \max( \sum_{i=1}^\n \beta_i^0  - d(\bar{\lambda}_T), 0)^2 + \frac{1}{2} \norm{\frac{1}{\n} \sum_{i=1}^\n A_i \bar{x}_{i, T} - b }_+^2} \right] \\
        &\leq \frac{1}{\sqrt{2}} \left(\Eone \left[\max(  \sum_{i=1}^\n \beta_i^0 - d(\bar{\lambda}_T), 0)\right] + \Eone \left[\norm{\frac{1}{\n} \sum_{i=1}^\n A_i \bar{x}_{i, T} - b }_+\right] \right).
    \end{align*}
    To simplify notation, let us observe that from~\eqref{eq:beta0-nonconvex-definition} and~\eqref{eq:z0-nonconvex-definition}, we can write
    \begin{align*}
        \beta_i^0 &= \sum_{t=0}^{T-1} \nu_{i, t} \frac{1}{\n}f_i(x_i^*(\lambda_t))\ \text{ and } \ 
        z_i^0 = \sum_{t=0}^{T-1} \nu_{i, t} \frac{1}{\n}A_ix_i^*(\lambda_t),
    \end{align*}
    with $\nu_{i, t} \geq 0$, $\sum_{t=0}^{T-1} \nu_{i, t} = 1$, and allowing some terms $\nu_{i, t}$ to be $0$. We have
    \begin{align*}
        \sum_{i=1}^\n \beta_i^0  &= \frac{1}{\n} \sum_{i=1}^\n \sum_{t=0}^{T-1} \nu_{i, t} f_i(x_i^*(\lambda_t)) \\
        &= \frac{1}{\n} \sum_{i=1}^\n \sum_{t=0}^{T-1} \nu_{i, t} \left( f_i(x_i^*(\lambda_t)) + (\lambda^*)^\top (A_i x_i^*(\lambda_t) - b) \right) - \frac{1}{\n} \sum_{i=1}^\n \sum_{t=0}^{T-1} \nu_{i, t} (\lambda^*)^\top (A_i x_i^*(\lambda_t) - b).
\end{align*}
Observe that the terms $\sum_{t=0}^{T-1} \nu_{i, t} \left( f_i(x_i^*(\lambda_t)) + (\lambda^*)^\top (A_i x_i^*(\lambda_t) - b)\right)$ can be seen as the expectation of $f_i(x_i^*(\lambda_t)) + (\lambda^*)^\top (A_i x_i^*(\lambda_t) - b)$ with respect to the probability vector $\nu_i = (\nu_{i, 0}, \dots, \nu_{i, T-1})$. In particular, we may write
        \begin{align*}
        \sum_{i=1}^\n \beta_i^0  &= \E_{\nu_1, \dots, \nu_{\n}} \left[ \underbrace{\sum_{i=1}^\n \frac{1}{\n}\left( f_i(x_i^*(\lambda_t)) + (\lambda^*)^\top (A_i x_i^*(\lambda_t) - b) \right)}_{\geq d(\lambda^*)}\right] - (\lambda^*)^\top (z^0 - b) \\
        &\geq d^* - (\lambda^*)^\top \ProjPos{z^0 - b} \tag{since $\lambda^* \geq 0$}.
    \end{align*}
    Therefore,
    \begin{align*}
        \sum_{i=1}^\n \beta_i^0  - d(\bar{\lambda}_T) \geq d^* - d(\bar{\lambda}_T) - (\lambda^*)^\top \ProjPos{z^0 - b} \geq - (\lambda^*)^\top \ProjPos{z^0 - b}.
    \end{align*}
    By Lemma~\ref{lem:max-inequality}, we thus get
    \begin{align*}
        \max\left( \sum_{i=1}^\n \beta_i^0  - d(\bar{\lambda}_T),0\right) &\leq \sum_{i=1}^\n \beta_i^0  - d(\bar{\lambda}_T) + \abs{(\lambda^*)^\top \ProjPos{z^0 - b}} \\
        &\leq \sum_{i=1}^\n \beta_i^0  - d(\bar{\lambda}_T) + \norm{\lambda^*} \norm{z^0 - b}_+ \\
        &= \sum_{i=1}^\n \beta_i^0  - d(\bar{\lambda}_T) + \norm{\lambda^*} \norm{\frac{1}{\n}\sum_{i=1}^\n A_i \bar{x}_{i, T} - b}_+\\
        &= \sum_{i=1}^\n \beta_i^0  - d^* +  (d^* - d(\bar{\lambda}_T)) + \norm{\lambda^*} \norm{\frac{1}{\n}\sum_{i=1}^\n A_i \bar{x}_{i, T} - b}_+.
    \end{align*}
    We already have a bound on the last two terms from Propositions~\ref{prop:dual-convergence-stochastic-dual-subgradient} and~\ref{prop:primal-convergence-stochastic-dual-subgradient}, so it only remains to bound
    \begin{align*}
        \sum_{i=1}^\n \beta_i^0  - d^* &= \sum_{i=1}^\n \left( \frac{1}{I_i + 1} \left( \frac{1}{\n}f_i\left(x_i^*(\lambda_{T-1})\right) +  \sum_{t\in \{0, \dots, T-2\}, \atop i_t = i} \frac{1}{\n}f_i\left( x_i^*(\lambda_t)\right)\right)\right) - d^*.
    \end{align*}
    It may seem at first that we do not have a bound on this quantity from Proposition~\ref{prop:primal-convergence-stochastic-dual-subgradient}, but looking back at the proof of this proposition, and especially equation~\eqref{eq:proof-stochastic-dual-subgradient-primal-convergence-important-result}, we can see that this is precisely this quantity that we end up bounding in the proof. In particular,
    \begin{align*}
        \Eone \left[ \sum_{i=1}^\n \beta_i^0  - d^* \right] \leq  H \sqrt{\frac{\n - 1}{T}} + \frac{\alphabar\Gtilde}{2\sqrt{T}} + \frac{\n \Gtilde(2\norm{\lambda^*}  + \alphabar)}{T}.
    \end{align*}
    Putting everything together,
         \begin{align*}
        \Eone [\sqrt{F((\beta^0, z^0))}] \leq& \ \frac{1}{\sqrt{2}} \left( \Eone \left[ \sum_{i=1}^\n \beta_i^0 - d^* + d^* -  d(\bar{\lambda}_T) +  \norm{\lambda^*} \norm{\frac{1}{\n} \sum_{i=1}^\n A_i \bar{x}_{i,T} - b}_+\right]\right)  \\ & + \frac{1}{\sqrt{2}}\Eone \norm{\frac{1}{\n} \sum_{i=1}^\n A_i \bar{x}_{i,T} - b}_+ \\
        \leq &\ \frac{1}{\sqrt{2}} \left( \frac{\Gtilde \norm{\lambda^*}^2}{2\alphabar \sqrt{T}} + \frac{\Gtilde \alphabar}{2 \sqrt{T}}\right) + \frac{1}{\sqrt{2}} \left( H \sqrt{\frac{N-1}{T}} + \frac{\alphabar \Gtilde }{2 \sqrt{{T}}}  + \frac{\n \Gtilde( 2\norm{\lambda^*} + \alphabar)}{T}\right)   \\ &+ \frac{\norm{\lambda^*} + 1}{\sqrt{2}}  \left( \Gtilde \sqrt{\frac{N-1}{T}} + \frac{ 2\Gtilde \norm{\lambda^*}} {\alphabar\sqrt{T}} + \frac{\Gtilde}{\sqrt{T}}  + \frac{\n \Gtilde }{T} \right).
    \end{align*}
    Combining the above with~\eqref{eq:nonconvex-proof-1} and~\eqref{eq:nonconvex-proof-2} yields the result.
\end{proof}

\subsection{Technical results}

\begin{lem}
\label{lem:max-inequality}
    For any $a,c \in \R$ such that $a \geq c$, it holds that $\max(a, 0) \leq a + \abs{c}$.
\end{lem}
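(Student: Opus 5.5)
The plan is a two-case argument on the sign of $a$; no earlier result is needed.

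If $a \geq 0$, then $\max(a,0) = a$. Since $\abs{c} \geq 0$, we get $a \leq a + \abs{c}$, which is the claim.

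If $a < 0$, then $\max(a,0) = 0$, so we must show $a + \abs{c} \geq 0$. The hypothesis $a \geq c$ gives $c \leq a < 0$, hence $\abs{c} = -c$. Therefore
\begin{align*}
    a + \abs{c} = a - c \geq 0,
\end{align*}
again using $a \geq c$, which proves the claim in this case as well.

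No step is a real obstacle. The only point to check is the second case, where we need to know the sign of $c$. The hypothesis $a \geq c$ together with $a<0$ forces $c<0$, and that is what makes the identity $\abs{c} = -c$ valid.
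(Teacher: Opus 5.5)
Your proof is correct and is the same as the paper's: split on the sign of $a$, and in the case $a<0$ use $c \leq a < 0$ to get $\abs{c}=-c$, so that $a+\abs{c}=a-c\geq 0=\max(a,0)$. Nothing is missing.
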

\begin{proof}
    If $a \geq 0$, this is obvious. If $ a < 0$, then also $c < 0$ so that $\abs{c} = -c$. Therefore $a + \abs{c} = a - c \geq 0 = \max(a, 0)$.
\end{proof}

\bibliographystyle{spmpsci}      
\bibliography{bib} 

\begin{thebibliography}{10}
\providecommand{\url}[1]{{#1}}
\providecommand{\urlprefix}{URL }
\expandafter\ifx\csname urlstyle\endcsname\relax
  \providecommand{\doi}[1]{DOI~\discretionary{}{}{}#1}\else
  \providecommand{\doi}{DOI~\discretionary{}{}{}\begingroup \urlstyle{rm}\Url}\fi

\bibitem{aubin1976estimates}
Aubin, J.P., Ekeland, I.: Estimates of the duality gap in nonconvex optimization.
\newblock Mathematics of Operations Research \textbf{1}(3), 225--245 (1976)

\bibitem{baumann2013portfolio}
Baumann, P., Trautmann, N.: Portfolio-optimization models for small investors.
\newblock Mathematical Methods of Operations Research \textbf{77}, 345--356 (2013)

\bibitem{beck20141}
Beck, A., Nedi{\'c}, A., Ozdaglar, A., Teboulle, M.: An o(1/k) gradient method for network resource allocation problems.
\newblock IEEE Transactions on Control of Network Systems \textbf{1}(1), 64--73 (2014)

\bibitem{beck2017linearly}
Beck, A., Shtern, S.: Linearly convergent away-step conditional gradient for non-strongly convex functions.
\newblock Mathematical Programming \textbf{164}, 1--27 (2017)

\bibitem{bertsekas1983optimal}
Bertsekas, D., Lauer, G., Sandell, N., Posbergh, T.: Optimal short-term scheduling of large-scale power systems.
\newblock IEEE Transactions on Automatic Control \textbf{28}(1), 1--11 (1983)

\bibitem{bertsekas2003convex}
Bertsekas, D., Nedic, A., Ozdaglar, A.: Convex analysis and optimization, vol.~1.
\newblock Athena Scientific (2003)

\bibitem{bertsekas1997nonlinear}
Bertsekas, D.P.: Nonlinear programming.
\newblock Journal of the Operational Research Society \textbf{48}(3), 334--334 (1997)

\bibitem{bertsekas2014constrained}
Bertsekas, D.P.: Constrained optimization and Lagrange multiplier methods.
\newblock Academic Press (2014)

\bibitem{besanccon2025pivoting}
Besan{\c{c}}on, M., Pokutta, S., Wirth, E.S.: The pivoting framework: Frank-{W}olfe algorithms with active set size control.
\newblock In: International Conference on Artificial Intelligence and Statistics, pp. 271--279. PMLR (2025)

\bibitem{bi2016refined}
Bi, Y., Tang, A.: Refined {S}hapley-{F}olkman lemma and its application in duality gap estimation.
\newblock arXiv preprint arXiv:1610.05416  (2016)

\bibitem{boyd2011distributed}
Boyd, S., Parikh, N., Chu, E., Peleato, B., Eckstein, J., et~al.: Distributed optimization and statistical learning via the alternating direction method of multipliers.
\newblock Foundations and Trends{\textregistered} in Machine learning \textbf{3}(1), 1--122 (2011)

\bibitem{combettes2023revisiting}
Combettes, C.W., Pokutta, S.: Revisiting the approximate {C}arath{\'e}odory problem via the {F}rank-{W}olfe algorithm.
\newblock Mathematical Programming \textbf{197}(1), 191--214 (2023)

\bibitem{dubois2025frank}
Dubois-Taine, B., d’Aspremont, A.: Frank-{W}olfe meets {S}hapley-{F}olkman: a systematic approach for solving nonconvex separable problems with linear constraints.
\newblock Mathematical Programming pp. 1--51 (2025)

\bibitem{ekeland1999convex}
Ekeland, I., Temam, R.: Convex analysis and variational problems.
\newblock SIAM (1999)

\bibitem{ermol1966methods}
Ermol'ev, Y.M.: Methods of solution of nonlinear extremal problems.
\newblock Cybernetics \textbf{2}(4), 1--14 (1966)

\bibitem{gabay1976dual}
Gabay, D., Mercier, B.: A dual algorithm for the solution of nonlinear variational problems via finite element approximation.
\newblock Computers \& {M}athematics with applications \textbf{2}(1), 17--40 (1976)

\bibitem{glowinski1975approximation}
Glowinski, R., Marroco, A.: Sur l'approximation, par {\'e}l{\'e}ments finis d'ordre un, et la r{\'e}solution, par p{\'e}nalisation-dualit{\'e} d'une classe de probl{\`e}mes de dirichlet non lin{\'e}aires.
\newblock Revue {F}ran{\c{c}}aise d'{A}utomatique, {I}nformatique, {R}echerche {O}p{\'e}rationnelle. Analyse {N}um{\'e}rique \textbf{9}(R2), 41--76 (1975)

\bibitem{gustavsson2015primal}
Gustavsson, E., Patriksson, M., Str{\"o}mberg, A.B.: Primal convergence from dual subgradient methods for convex optimization.
\newblock Mathematical Programming \textbf{150}, 365--390 (2015)

\bibitem{hiriart1996convex}
Hiriart-Urruty, J.B., Lemar{\'e}chal, C.: Convex analysis and minimization algorithms, vol. 305.
\newblock Springer {S}cience \& {B}usiness {M}edia (1996)

\bibitem{kiwiel2001parallel}
Kiwiel, K., Lindberg, P.: Parallel subgradient methods for convex optimization.
\newblock In: Studies in Computational Mathematics, vol.~8, pp. 335--344. Elsevier (2001)

\bibitem{lacoste2013block}
Lacoste-Julien, S., Jaggi, M., Schmidt, M., Pletscher, P.: Block-coordinate {F}rank-{W}olfe optimization for structural {SVM}s.
\newblock In: International Conference on Machine Learning, pp. 53--61. PMLR (2013)

\bibitem{larsson1997lagrangean}
Larsson, T., Liu, Z.: A {L}agrangean relaxation scheme for structured linear programs with application to multicommodity network flows.
\newblock Optimization \textbf{40}(3), 247--284 (1997)

\bibitem{larsson1996ergodic}
Larsson, T., Patriksson, M., Str{\"o}mberg, A.B.: Ergodic results and bounds on the optimal value in subgradient optimization.
\newblock In: Operations Research Proceedings 1995: Selected Papers of the Symposium on Operations Research (SOR’95), pp. 30--35. Springer (1996)

\bibitem{lemarechal2001geometric}
Lemar{\'e}chal, C., Renaud, A.: A geometric study of duality gaps, with applications.
\newblock Mathematical Programming \textbf{90}, 399--427 (2001)

\bibitem{nedic2001distributed}
Nedic, A., Bertsekas, D., Borkar, V.: Distributed asynchronous incremental subgradient methods (2001)

\bibitem{nedic2001incremental}
Nedic, A., Bertsekas, D.P.: Incremental subgradient methods for nondifferentiable optimization.
\newblock SIAM Journal on Optimization \textbf{12}(1), 109--138 (2001)

\bibitem{nedic2009approximate}
Nedi{\'c}, A., Ozdaglar, A.: Approximate primal solutions and rate analysis for dual subgradient methods.
\newblock SIAM Journal on Optimization \textbf{19}(4), 1757--1780 (2009)

\bibitem{nedic2009distributed}
Nedic, A., Ozdaglar, A.: Distributed subgradient methods for multi-agent optimization.
\newblock IEEE Transactions on automatic control \textbf{54}(1), 48--61 (2009)

\bibitem{nemirovski1978cezari}
Nemirovski, A., Yudin, D.: On {C}ezari's convergence of the steepest descent method for approximating saddle point of convex-concave functions.
\newblock In: Soviet Mathematics. Doklady, vol.~19, pp. 258--269 (1978)

\bibitem{nemirovskij1983problem}
Nemirovskij, A.S., Yudin, D.B.: Problem complexity and method efficiency in optimization.
\newblock Wiley-Interscience  (1983)

\bibitem{nesterov2013introductory}
Nesterov, Y.: Introductory {L}ectures on {C}onvex {O}ptimization: A {B}asic {C}ourse, vol.~87.
\newblock Springer Science \& Business Media (2013)

\bibitem{nesterov2018dual}
Nesterov, Y., Shikhman, V.: Dual subgradient method with averaging for optimal resource allocation.
\newblock European Journal of Operational Research \textbf{270}(3), 907--916 (2018)

\bibitem{onnheim2017ergodic}
{\"O}nnheim, M., Gustavsson, E., Str{\"o}mberg, A.B., Patriksson, M., Larsson, T.: Ergodic, primal convergence in dual subgradient schemes for convex programming, ii: the case of inconsistent primal problems.
\newblock Mathematical Programming \textbf{163}(1), 57--84 (2017)

\bibitem{polyak1967general}
Polyak, B.T.: A general method for solving extremal problems.
\newblock In: Soviet Mathematics Doklady, vol.~8, pp. 593--597 (1967)

\bibitem{polyak1987introduction}
Polyak, B.T.: Introduction to Optimization.
\newblock Optimization Software Inc., New York (1987)

\bibitem{rockafellar2015convex}
Rockafellar, R.T.: Convex Analysis:(PMS-28).
\newblock Princeton {U}niversity {P}ress (2015)

\bibitem{ruszczynski1995convergence}
Ruszczy{\'n}ski, A.: On convergence of an augmented {L}agrangian decomposition method for sparse convex optimization.
\newblock Mathematics of Operations Research \textbf{20}(3), 634--656 (1995)

\bibitem{shor2012minimization}
Shor, N.Z.: Minimization methods for non-differentiable functions, vol.~3.
\newblock Springer Science \& Business Media (2012)

\bibitem{starr1969quasi}
Starr, R.M.: Quasi-equilibria in markets with non-convex preferences.
\newblock Econometrica: journal of the Econometric Society pp. 25--38 (1969)

\bibitem{udell2016bounding}
Udell, M., Boyd, S.: Bounding duality gap for separable problems with linear constraints.
\newblock Computational Optimization and Applications \textbf{64}(2), 355--378 (2016)

\bibitem{vujanic2014large}
Vujanic, R., Esfahani, P.M., Goulart, P., Morari, M.: Large scale mixed-integer optimization: A solution method with supply chain applications.
\newblock In: 22nd Mediterranean Conference on Control and Automation, pp. 804--809 (2014)

\bibitem{vujanic2016decomposition}
Vujanic, R., Esfahani, P.M., Goulart, P.J., Mari{\'e}thoz, S., Morari, M.: A decomposition method for large scale milps, with performance guarantees and a power system application.
\newblock Automatica \textbf{67}, 144--156 (2016)

\bibitem{wirth2024frank}
Wirth, E.S.: Frank-Wolfe Algorithms in Polytope Settings.
\newblock Technische Universitaet Berlin (Germany) (2024)

\bibitem{wolfe1970convergence}
Wolfe, P.: Convergence theory in nonlinear programming.
\newblock Integer and nonlinear programming pp. 1--36 (1970)

\bibitem{zhao1999surrogate}
Zhao, X., Luh, P.B., Wang, J.: Surrogate gradient algorithm for {L}agrangian relaxation.
\newblock Journal of optimization Theory and Applications \textbf{100}(3), 699--712 (1999)

\end{thebibliography}
\end{document}